\DeclareSymbolFontAlphabet{\mathbbm}{bbold}
\DeclareSymbolFontAlphabet{\mathbb}{AMSb}%
\tikzset{my_dot/.style={fill, circle, inner sep=0pt,minimum size=3pt}}
\tikzset{my_node/.style={fill, circle, inner sep=0pt,minimum size=3pt}}
\tikzset{inv/.style={fill, circle, inner sep=0pt,minimum size=0pt}}
\newtheorem{theorem}{Theorem}[section]
\newtheorem{definition}[theorem]{Definition}
\newtheorem{proposition}[theorem]{Proposition}
\newtheorem{lemma}[theorem]{Lemma}
\newtheorem{corollary}[theorem]{Corollary}
\newtheorem*{theorem*}{Theorem}
\newtheorem*{corollary*}{Corollary}
\numberwithin{equation}{subsection}
\theoremstyle{definition}
\newtheorem{remark}[theorem]{Remark}
\newtheorem{example}[theorem]{Example}
\newtheorem{observation}[theorem]{Observation}
\def\C{\ensuremath{\mathbb{C}}}
\def\Q{\ensuremath{\mathbb{Q}}}
\def\R{\ensuremath{\mathbb{R}}}
\def\Z{\ensuremath{\mathbb{Z}}}
\def\cM{\ensuremath{\mathcal{M}}}
\DeclareMathOperator{\Aut}{Aut}
\DeclareMathOperator{\Gr}{Gr}
\DeclareMathOperator{\Out}{Out}
\DeclareMathOperator{\val}{val}
\DeclareMathOperator{\Bl}{Bl}
\def\col{\colon}
\def\Dg{\Delta_{g}}
\def\Dgn{\Delta_{g,n}}
\def\Dn{\Delta_{1,n}}
\def\Gmw{{\bf G}}
\def\J{\mathbbm{\Gamma}}
\def\Mgn{M^{\trop}_{g,n}} 
\def\ocM{\overline{\cM}}
\def\rep{\Delta^{\mathrm{rep}}_{g,n}}
\def\trop{\mathrm{trop}}
\def\br{\Delta^{\mathrm{br}}_{g,n}}
\def\lw{\Delta^{\mathrm{lw}}_{g,n}}
\def\weight{\Delta^{\mathrm{w}}_{g,n}}
\def\Ggn{G^{(g,n)}}
\def\bfB{\mathbf{B}}
\def\bfv{v}
\def\Simp{\mathrm{Simp}}
\newcommand{\double}{\genfrac..{0pt}1
{\raise -2pt\hbox{$\scriptstyle\longrightarrow$}}{\raise 4pt\hbox
{$\scriptstyle\longrightarrow$}}}
\newcommand{\FFF}[1]{\widetilde{D}([#1])}
\newcommand{\EE}{\widetilde{D}}
\newcommand{\EEE}[1]{\widetilde{D}^{[#1]}}
\begin{document}

\title{Topology of moduli spaces of tropical curves with marked points}

\author{Melody Chan}
\email{melody\_chan@brown.edu}

\author{S{\o}ren Galatius}
\email{galatius@math.ku.dk}

\author{Sam Payne}
\email{sampayne@utexas.edu}

\begin{abstract}
This article is a sequel to \cite{CGP1-JAMS}.  We study a space $\Dgn$ of genus $g$ stable, $n$-marked tropical curves with total edge length $1$.  Its rational homology is identified with both top-weight cohomology of the complex moduli space $\cM_{g,n}$ and with the homology of a marked version of Kontsevich's graph complex, up to a shift in degrees.

We prove a contractibility criterion that applies to various large subspaces of $\Dgn$.  From this we derive a description of the homotopy type of $\Delta_{1,n}$, the top weight cohomology of $\cM_{1,n}$ as an $S_n$-representation, and additional calculations of $H_i(\Delta_{g,n};\Q)$ for small $(g,n)$. We also deduce a vanishing theorem for homology of marked graph complexes from vanishing of cohomology of $\cM_{g,n}$ in appropriate degrees, by relating both to $\Dgn$. We comment on stability phenomena, or lack thereof.
 \end{abstract}

\maketitle

\begin{center}{\em Dedicated to William Fulton on the occasion of his 80th birthday}\end{center}
\tableofcontents

\section{Introduction}

In \cite{CGP1-JAMS}, we studied the topology of the {\em tropical moduli space} $\Dg$ of stable tropical curves of genus $g$ and total edge length $1$. Here, a tropical curve is a metric graph with nonnegative integer vertex weights; it is said to be {\em stable} if every vertex of weight zero has valence at least 3. With appropriate degree shift, the rational homology of $\Dg$ is isomorphic to both Kontsevich's graph homology and the top weight cohomology of the complex algebraic moduli space $\cM_g$.  As an application of these identifications, we deduced that
$$\dim H^{4g-6}(\cM_g;\Q) > 1.32^g + \text{constant},$$ 
disproving conjectures of Church, Farb, and Putman \cite[Conjecture~9]{ChurchFarbPutman14} and of Kontsevich \cite[Conjecture~7C]{Kontsevich93}, which would have implied that these cohomology groups vanish for all but finitely many $g$.

In this article, we expand on \cite{CGP1-JAMS}, in two main ways.
\begin{enumerate}
\item We introduce marked points. Given $g,n\ge 0$ such that $2g-2+n>0$, we study a space $\Dgn$ parametrizing stable tropical curves of genus $g$ with $n$ labeled, marked points (not necessarily distinct).  The case $n=0$ recovers the spaces $\Delta_g = \Delta_{g,0}$ studied in \cite{CGP1-JAMS}.
\item We are interested here in $\Dgn$ as a topological space, instead of studying only its rational homology.   For example, we prove that several large subspaces of $\Dgn$ are contractible, and determine the homotopy type of $\Delta_{1,n}$.
\end{enumerate}

As for (1), the introduction of marked points to the basic setup of \cite{CGP1-JAMS} poses no new technical obstacles.  In particular, we note that $\Dgn$ is the boundary complex of the Deligne-Mumford compactification $\ocM_{g,n}$ of $\cM_{g,n}$ by stable curves.
This identification implies that there is a natural isomorphism
\begin{equation}\label{eq:mgn-comparison}
\widetilde{H}_{k-1}(\Dgn;\Q)  \xrightarrow{\cong} \Gr_{6g-6+2n}^W H^{6g-6+2n-k} (\cM_{g,n}; \Q),
\end{equation}
identifying the reduced rational homology of $\Dgn$ with the top graded piece of the weight filtration on the cohomology of $\cM_{g,n}$. See \S\ref{sec:applications}.

As for (2), studying the combinatorial topology of $\Delta_{g,n}$ in more depth is a new contribution compared to \cite{CGP1-JAMS}, where we only needed the rational homology of $\Delta_g$ for our applications.  Note that $\Delta_{g,n}$ is a {\em symmetric $\Delta$-complex} (\S\ref{sec:cellular-chains}).  A new technical tool introduced in this article is a theory of collapses for symmetric $\Delta$-complexes, roughly analogous to discrete Morse theory for CW complexes (Proposition~\ref{prop:prop}).  We apply this tool to prove that several natural subcomplexes of $\Dgn$ are contractible.
Recall that an edge in a connected graph is called a \emph{bridge} if deleting it disconnects the graph.
\begin{restatable}{theorem}{contractible}  
\label{thm:contractible}
Assume $g > 0$ and $2g-2+n>0$.  Each of the following subcomplexes of $\Dgn$ is either empty or contractible.
\begin{enumerate}
\item \label{it:weight-contract} The subcomplex $\Dgn^{\rm w}$ of $\Dgn$ parametrizing tropical curves with at least one vertex of positive weight.
\item \label{it:loop-contract} The subcomplex $\Dgn^{\rm lw}$ of $\Dgn$ parametrizing tropical curves with loops or vertices of positive weight.
\item \label{it:rep-contract}
The subcomplex $\Dgn^{\rm rep}$ of $\Dgn$ parametrizing tropical curves in which at least two marked points coincide.
\item \label{it:br-contract} The closure $\Dgn^{\rm br}$ of the locus of tropical curves with bridges. 
\end{enumerate}
\end{restatable}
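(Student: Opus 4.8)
The plan is to apply the collapsibility criterion of Proposition~\ref{prop:prop} --- the analogue of discrete Morse theory for symmetric $\Delta$-complexes --- to each of the four subcomplexes in turn. In every case the subcomplex is the union of the cells $\sigma_G$ indexed by the isomorphism classes of stable $n$-marked weighted graphs $G$ that carry the relevant combinatorial feature, and the aim is to exhibit an acyclic matching on these graphs (equivalently, a sequence of elementary collapses) with exactly one unmatched, or critical, cell. Since the presence of a unique critical cell forces it to be $0$-dimensional, the subcomplex then collapses onto a point and is contractible --- unless the indexing set is empty, in which case one simply records for which $(g,n)$ this occurs (for instance $\weight$ is empty exactly when $(g,n)=(1,1)$). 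The feature common to all four arguments, and the reason Proposition~\ref{prop:prop} is formulated as it is, is that the cells of a symmetric $\Delta$-complex carry automorphism groups, so each matching rule must be isomorphism-invariant and its acyclicity must be verified in that equivariant sense.

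For part (1) the matching opens and closes loops at a canonically selected positive-weight vertex $v=v(G)$: match $G$ with $G/e$ for a distinguished loop $e$ at $v$ when $v$ supports a loop, and otherwise match $G$ with the graph obtained from it by decreasing $w(v)$ by one and attaching a new loop at $v$; an auxiliary rule, anchored at a distinguished edge at $v$, handles the case in which $v$ has weight $0$ and supports no loop. The critical cell is an explicit $0$-cell: for $g\geq 2$ the one-vertex graph of weight $g-1$ carrying all $n$ markings and a single loop, and for $g=1$ a canonically chosen minimal tree with one weight-$1$ vertex. Part (2) runs the same argument on the larger index set that in addition includes graphs whose only distinguishing feature is a loop. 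Part (3) uses the marked points in place of the weight: the distinguished vertex is the one carrying the smallest-labeled marking that is shared with another marking, the open/close move is performed subject to keeping at least two markings coincident at every stage, and the critical cell is the $0$-cell consisting of a single bridge that separates the coinciding markings from the rest of the curve.

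\textbf{I expect part (4) to be the main obstacle}, because $\br$ is defined as a closure and so is not cut out by a condition local to a single cell; the first task is therefore an intrinsic description of its cells, namely that $\sigma_G\subseteq\br$ exactly when some vertex $v$ of $G$ admits a splitting --- a partition of the legs and half-edges at $v$ together with a splitting $w(v)=w'+w''$ of its weight --- that turns the resulting new edge into a bridge while leaving both sides stable, equivalently when $G$ has a cut vertex supporting a stable bridge expansion. Granting this, the matching pairs each such graph, equipped with a canonically chosen splitting or a canonically chosen bridge, with its bridge expansion, respectively its bridge contraction; the delicate point is to choose the distinguished splitting so that it is preserved by the matching moves, which is precisely what forces acyclicity, and this is where I would expect to spend most of the effort. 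The critical cell is again a single explicitly described $0$-cell built from one bridge, so Proposition~\ref{prop:prop} applies and yields contractibility. In all four parts the only substantive verification is acyclicity of the matching; identifying the critical cell, and checking the empty cases, is then routine.
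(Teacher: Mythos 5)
Your high-level plan — invoke Proposition~\ref{prop:prop} and flag part (\ref{it:br-contract}) as the hard case — matches the paper's strategy, and your intrinsic description of the cells of $\br$ (a cut vertex admitting a stable bridge expansion) is essentially Lemma~\ref{lem:art}. But the execution diverges from how Proposition~\ref{prop:prop} actually works, and the gaps you acknowledge are precisely where the content of the paper's proof lies.

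First, Proposition~\ref{prop:prop} is not a statement about acyclic matchings with a unique critical cell. Its data are two \emph{properties} $P, Q \subset X_0$ (subsets of $0$-simplices, which for $\Dgn$ are one-edge graphs: the loop and the various bridge types $\bfB(g',A)$), and the output is a deformation retraction of $X_P \cup X_Q$ onto the \emph{subcomplex} $X_P$, not onto a single cell. For parts (\ref{it:weight-contract}) and (\ref{it:loop-contract}) the paper takes $P=\emptyset$, $Q=P_{1,0}$, and the end state is the subcomplex of graphs all of whose edges are $(1,0)$-bridges — a $(g-1)$-simplex, not a $0$-cell. So your claim that "a unique critical cell forces it to be $0$-dimensional" does not describe what the retraction produces. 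Moreover, the paper never applies the proposition to $\Dgn$ with a single property: Example~\ref{ex:nonexample-Dgn} shows the canonicity hypothesis fails globally, and this is precisely why Corollary~\ref{cor:sequence} and Theorem~\ref{thm:bridges} introduce a \emph{sequence} of properties $P_{0,n}, \dots, P_{0,2}, P_{1,0}, \dots$, each applied relative to the union of the earlier ones. Your proposal does not engage with this.

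Second, the moves you propose for parts (\ref{it:weight-contract})--(\ref{it:loop-contract}) do not fit the framework. "Open/close a loop at a canonical positive-weight vertex" is not a co-$Q$ face relation for any property $Q$: the property $P_{g',n'}$ is a \emph{bridge} type, and the retraction in Proposition~\ref{prop:prop} shrinks non-$Q$ edges, it does not trade weight for loops. It is also not an involution on $\weight$ as stated: if $v$ supports several loops, contracting one still leaves a loop at the image vertex, so the "partner" of the contraction is not the original graph. The paper instead uses $(1,0)$-bridge \emph{expansions}: replace each loop by a bridge-to-a-loop and each unit of weight by a bridge to a weight-$1$ leaf, which is a canonical co-$P_{1,0}$ maximal face and preserves membership in $\weight$ and $\lw$.

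Third, for part (\ref{it:br-contract}) you correctly anticipate that the delicate point is choosing a distinguished splitting compatibly with the collapsing moves, but you have not produced it, and this is where the entire technical weight of the paper's proof sits. The key ideas you are missing are: (i) order the bridge properties lexicographically in $(g', n')$ — the precise ordering $P_{0,n},\dots,P_{0,2},P_{1,0},\dots,P_{1,n},P_{2,0},\dots$ of Theorem~\ref{thm:bridges}; (ii) pass to the block graph $\Bl(\Gmw)$ and its edge labels to detect which bridge expansions are available at each articulation point (Lemma~\ref{lem:art}); and (iii) verify at each stage $i$ that $P=P_1\cup\dots\cup P_{i-1}$ is co-$P_i$-saturated and that simplices outside $P^*$ admit canonical co-$P_i$ maximal faces — these are the two hypotheses of Proposition~\ref{prop:prop} and they only hold \emph{because} the ordering is chosen carefully. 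Without that machinery, "choose the splitting so that it is preserved by the moves" is not an argument but a restatement of the problem.
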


\noindent It is easy to classify when these loci are nonempty; see Remark~\ref{rem:when-nonempty}.  We refer to \cite[\S5.3]{ConantVogtmann03} and \cite{ConantGerlitsVogtmann05} for related results on contractibility of spaces of graphs with bridges.

\medskip

We use Theorem~\ref{thm:contractible} to deduce a number of consequences, which we outline below.

\medskip

{\bf The genus $1$ case.}  When $g=0$, the topology of the spaces $\Delta_{0,n}$ has long been understood; they are shellable simplicial complexes, homotopy equivalent to a wedge sum of $(n-2)!$ spheres of dimension $n-4$ \cite{Vogtmann90}.  Moreover, the character of $H_{n-4}(\Delta_{0,n};\Q)$ as an $S_n$-representation is computed in \cite{RobinsonWhitehouse96}.  Our results below give an analogous complete understanding when $g=1$.
Namely, the spaces $\Delta_{1,1}$ and $\Delta_{1,2}$ are easily seen to be contractible (Remark~\ref{rem:11-12-contractible}), and for $n\ge 3$, we have the following theorem.

\begin{restatable}{theorem}{boundary}  
\label{thm:boundary}  For $n\ge 3$, the space $\Dn$ is homotopy equivalent to a wedge sum of $(n-1)! / 2$ spheres of dimension $n-1$. The representation of $S_n$ on $H_{n-1}(\Delta_{1,n}; \Q)$ induced by permuting marked points is
\[
\mathrm{Ind}_{D_n,\phi}^{S_n} \, \mathrm{Res}^{S_n}_{D_n,\psi} \, \mathrm{sgn}.
\] 
\noindent Here, $\phi\colon D_n \rightarrow S_n$ is the dihedral group of order $2n$ acting on the vertices of an $n$-gon, $\psi\colon D_n \rightarrow S_n$ is the action of the dihedral group on the edges of the $n$-gon, and $\mathrm{sgn}$ denotes the sign representation of $S_n$.   
\end{restatable}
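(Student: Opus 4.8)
The plan is to build an explicit homotopy equivalence from $\Dn$ to a wedge of spheres by collapsing a contractible subcomplex, and then to read the $S_n$-action directly off the resulting cellular chain complex.

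\medskip\noindent\textbf{Identifying the core cells.}
Set $Y=\Dn^{\rm lw}\cup\Dn^{\rm rep}$. First I would check that the cells of $\Dn$ \emph{not} contained in $Y$ are exactly those indexed by a graph isomorphic to an $n$-cycle $C_n$ carrying one marked point at each vertex. Indeed, a stable genus-$1$ tropical curve with no vertex of positive weight has $b_1=1$; if moreover it has no loop and no two marked points at a common vertex, then it has no bridge either, since a bridge would force a tree with a non-root leaf, i.e.\ a valence-$1$ weight-$0$ vertex, which by stability must carry at least two marked points --- a contradiction. (In particular $\Dn^{\rm br}\subseteq Y$, so enlarging $Y$ by $\Dn^{\rm br}$ changes nothing.) Hence every vertex has valence $2$ and exactly one marked point, so the underlying graph is $C_n$ and the markings biject with its vertices. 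Such a ``core'' cell $\sigma_G$ has trivial automorphism group (distinct marks rigidify $G$), so it is an honest $(n-1)$-simplex; and contracting any edge of $C_n$ either merges two marked points or creates a loop, so the whole boundary $\partial\sigma_G$ lies in $Y$. It follows that $\Dn/Y$ is a wedge of $(n-1)$-spheres indexed by isomorphism classes of such $G$, i.e.\ by cyclic orderings of $\{1,\dots,n\}$ up to rotation and reflection; there are $(n-1)!/2$ of these.

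\medskip\noindent\textbf{Collapsing $Y$ (the main obstacle).}
By Theorem~\ref{thm:contractible} the subcomplexes $\Dn^{\rm lw}$ and $\Dn^{\rm rep}$ are each contractible and nonempty for $n\ge 3$. The step I expect to require the most care is showing that the \emph{union} $Y$ is still contractible, since a union of contractible subcomplexes need not be. I would obtain this either via the gluing lemma for homotopy pushouts, after verifying that the intersection $\Dn^{\rm lw}\cap\Dn^{\rm rep}$ is contractible --- which should again follow from the contractibility criterion (Proposition~\ref{prop:prop}), as this intersection is also cut out by local combinatorial conditions --- or by applying Proposition~\ref{prop:prop} directly to $Y$. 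Granting contractibility of $Y$, the inclusion $Y\hookrightarrow\Dn$ of symmetric $\Delta$-subcomplexes is a cofibration with contractible source, so $\Dn\simeq\Dn/Y\simeq\bigvee_{(n-1)!/2}S^{n-1}$, giving the first assertion.

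\medskip\noindent\textbf{The $S_n$-representation.}
Since $\Dn/Y$ has cells only in degrees $0$ and $n-1$, its reduced cellular chain complex has zero differential in the relevant range, so $H_{n-1}(\Dn;\Q)\cong\widetilde C_{n-1}(\Dn/Y)$ as $S_n$-modules. Now $S_n$ permutes the core cells transitively, and the stabilizer of the cell $\sigma_{G_0}$ attached to a fixed labelled $n$-cycle $G_0$ is precisely the image $\phi(D_n)$ of the dihedral group acting on the vertices of the $n$-gon; therefore $\widetilde C_{n-1}(\Dn/Y)\cong\mathrm{Ind}_{D_n,\phi}^{S_n}L$, where $L$ is the orientation line of $\sigma_{G_0}$ viewed as a $\phi(D_n)$-representation. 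An element of $D_n$ acts on $\sigma_{G_0}$ through the induced permutation of the $n$ edges of the $n$-gon, hence on $L\cong\det\Q^{E(G_0)}$ by the sign of that permutation; that is, $L\cong\mathrm{Res}^{S_n}_{D_n,\psi}\mathrm{sgn}$. Combining, $H_{n-1}(\Dn;\Q)\cong\mathrm{Ind}_{D_n,\phi}^{S_n}\mathrm{Res}^{S_n}_{D_n,\psi}\mathrm{sgn}$, of dimension $[S_n:D_n]=(n-1)!/2$, consistent with the sphere count. The one remaining point is routine bookkeeping: choosing compatible orientations of the core simplices and tracking signs in the chain complex of a symmetric $\Delta$-complex, together with the standard fact that the chains on $\Dn/Y$ in top degree are induced from the stabilizer representation of a single core cell.
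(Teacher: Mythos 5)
Your overall strategy --- collapse a contractible subcomplex containing everything but the $n$-cycle cells, then read off the $S_n$-action from the orbit of core cells and their orientation lines --- is exactly the paper's. The one point where you make things harder than necessary is what you flag as ``the main obstacle'': contractibility of the union $Y=\Dn^{\rm lw}\cup\Dn^{\rm rep}$. This obstacle is spurious. For $g=1$ and $n\ge 2$ one has $\Dn^{\rm lw}\subseteq\Dn^{\rm rep}$: a positive-weight vertex forces $b_1=0$, so the underlying graph is a tree, and either it is a single vertex (carrying all $n\ge 2$ markings) or it has a weight-$0$ valence-$1$ leaf which stability forces to carry $\ge 2$ markings; likewise a loop (with all weights zero) forces the loop to be the entire core, so either the graph is a single vertex with a loop carrying all markings, or there is again a weight-$0$ leaf. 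Hence $Y=\Dn^{\rm rep}$, and Theorem~\ref{thm:contractible} gives contractibility directly, with no gluing lemma or fresh application of Proposition~\ref{prop:prop} needed. This is precisely why the paper works with $\Dn^{\rm rep}$ alone: the same stability argument you use to rule out bridges also rules out loops and weights once one already assumes no repeated markings. Your identification of the complementary cells as distinctly-marked $n$-cycles, the observation that they have trivial automorphism groups so give honest $(n-1)$-simplices with boundary in $\Dn^{\rm rep}$, and the induced-representation computation with $\mathrm{Ind}_{D_n,\phi}^{S_n}\mathrm{Res}^{S_n}_{D_n,\psi}\mathrm{sgn}$ via the orientation line $\det\Q^{E(C_n)}$ all match the paper's argument.
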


\noindent Note that the signs of these two permutation actions of the dihedral group are different when $n$ is even.  Reflecting a square across a diagonal, for instance, exchanges one pair of vertices and two pairs of edges. Moreover, calculating characters shows that these two representations of $D_4$ remain non-isomorphic after inducing along $\phi \colon  D_4 \to S_4$. 

Let us sketch how the expression in Theorem~\ref{thm:boundary} arises; the complete proof is given in \S\ref{sec:proofs}.  The $(n-1)!/2$ spheres mentioned in the theorem are in bijection with the $(n-1)!/2$ left cosets of $D_n$ in $S_n$; each may be viewed as a way to place $n$ markings on the vertices of an unoriented $n$-cycle.  Choosing left coset representatives $\sigma_1,\ldots,\sigma_k$ where $k=(n-1)!/2$, for any $\pi \in S_n$ we have $\pi \sigma_i = \sigma_j \pi'$ for some $\pi' \in D_n$.  Then, writing $[\sigma_i]$ for the fundamental class of the corresponding sphere, it turns out that the $S_n$-action on top homology of $\Delta_{1,n}$ is described as $\pi \cdot [\sigma_i] = \pm [\sigma_j]$, 
where the sign depends on the sign of the permutation on the {\em edges} of the $n$-cycle induced by $\pi'$.     This is because the ordering of edges determines the orientation of the corresponding sphere.  This implies that the $S_n$-representation on $H_{n-1}(\Delta_{1,n}; \Q)$ is exactly $\mathrm{Ind}_{D_n,\phi}^{S_n} \, \mathrm{Res}^{S_n}_{D_n,\psi} \, \mathrm{sgn}.$

Combining~\eqref{eq:mgn-comparison} and Theorem~\ref{thm:boundary}, and noting $S_n$-equivariance, gives the following calculation for the top weight cohomology of $\cM_{1,n}$.

\begin{corollary}  \label{thm:genus1}
The top weight cohomology of $\cM_{1,n}$ is supported in degree $n$, with rank $(n-1)!/2$, for $n \geq 3$.  Moreover, the representation of $S_n$ on $\Gr_{2n}^W H^{n}(\cM_{1,n}; \Q)$ induced by permuting marked points is
\[
\mathrm{Ind}_{D_n,\phi}^{S_n} \, \mathrm{Res}^{S_n}_{D_n,\psi} \, \mathrm{sgn}.
\]
\end{corollary}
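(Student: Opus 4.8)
The plan is to deduce Corollary~\ref{thm:genus1} purely formally from Theorem~\ref{thm:boundary} and the comparison isomorphism~\eqref{eq:mgn-comparison}, so the work is essentially bookkeeping with degrees and checking $S_n$-equivariance. First I would apply~\eqref{eq:mgn-comparison} with $g=1$: for each $k$ it gives an isomorphism
\[
\widetilde{H}_{k-1}(\Delta_{1,n};\Q) \xrightarrow{\cong} \Gr^W_{2n} H^{2n-k}(\cM_{1,n};\Q),
\]
since $6g-6+2n = 2n$ when $g=1$. By Theorem~\ref{thm:boundary}, for $n \geq 3$ the space $\Delta_{1,n}$ is homotopy equivalent to a wedge of $(n-1)!/2$ spheres of dimension $n-1$, so $\widetilde{H}_{k-1}(\Delta_{1,n};\Q)$ is nonzero only when $k-1 = n-1$, i.e. $k = n$, where it has rank $(n-1)!/2$. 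Plugging $k=n$ into the displayed isomorphism shows $\Gr^W_{2n} H^{2n-k}(\cM_{1,n};\Q) = \Gr^W_{2n} H^{n}(\cM_{1,n};\Q)$ is the only nonvanishing top-weight piece, with rank $(n-1)!/2$. This proves the first sentence of the corollary.

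For the representation statement, the point is that the isomorphism~\eqref{eq:mgn-comparison} is $S_n$-equivariant, where $S_n$ acts on $\cM_{1,n}$ (and hence on its cohomology and the weight filtration) by permuting marked points, and acts on $\Delta_{1,n}$ by the corresponding permutation of the boundary strata / of the marking labels on tropical curves. I would cite the discussion in \S\ref{sec:applications} (where~\eqref{eq:mgn-comparison} is established) for this equivariance, as it is the natural isomorphism of the boundary complex of $\ocM_{1,n}$ with the relevant graded piece. Granting equivariance, the $S_n$-representation $\Gr^W_{2n}H^n(\cM_{1,n};\Q)$ is isomorphic to $\widetilde{H}_{n-1}(\Delta_{1,n};\Q)$ as an $S_n$-module, and Theorem~\ref{thm:boundary} identifies the latter with $\mathrm{Ind}_{D_n,\phi}^{S_n}\,\mathrm{Res}^{S_n}_{D_n,\psi}\,\mathrm{sgn}$, giving the second sentence.

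The only genuine content that needs care is the $S_n$-equivariance of~\eqref{eq:mgn-comparison}; everything else is immediate from Theorem~\ref{thm:boundary}. I expect this to be handled uniformly in \S\ref{sec:applications} rather than re-proved here, since the isomorphism comes from a Deligne weight-spectral-sequence / boundary-complex description that is manifestly functorial for automorphisms of the pair $(\ocM_{1,n}, \partial \ocM_{1,n})$, and the $S_n$-action on $\cM_{1,n}$ extends to such automorphisms. So the proof of the corollary itself is short: combine the degree count above with the equivariant comparison and quote Theorem~\ref{thm:boundary}.
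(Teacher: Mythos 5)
Your proposal is correct and matches the paper's proof exactly: the paper deduces Corollary~\ref{thm:genus1} by combining the comparison isomorphism~\eqref{eq:mgn-comparison} (established in \S\ref{sec:applications} as Corollary~\ref{cor:mgn-top-wt}, via the boundary complex identification $\Delta(\ocM_{g,n}\smallsetminus\cM_{g,n})\cong\Dgn$) with Theorem~\ref{thm:boundary}, noting $S_n$-equivariance just as you describe. The degree bookkeeping with $6g-6+2n=2n$ and $k=n$ is the same, and the paper treats the equivariance as automatic from naturality of the boundary-complex construction, as you anticipated.
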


\noindent Corollary~\ref{thm:genus1} is consistent with the recently proven formula for the $S_n$-equivariant top-weight Euler characteristic of $\cM_{g,n}$ in \cite{cgp-chi}.  See Remarks \ref{rem:M1n-stuff}, \ref{rem:more-M1n-stuff}, and \ref{rem:even-more-M1n-stuff} for further discussion of Corollary~\ref{thm:genus1} and its context.

In the case $g\ge 2$, we no longer have a complete understanding of the homotopy type of $\Delta_{g,n}$. 
However, the contractibility results in Theorem~\ref{thm:contractible} enable computer calculations of $H_i(\Delta_{g,n};\Q)$ for small $(g,n)$, presented in the Appendix.  

Theorem~\ref{thm:contractible} can also be used to deduce a lower bound on connectivity of the spaces $\Delta_{g,n}$. We do not pursue this here, but refer to \cite[Theorem 1.3]{cgp-arxiv}.

\bigskip

{\bf Marked graph complexes.}  In \cite{CGP1-JAMS} we gave a cellular chain complex $C_*(X;\Q)$ associated to any symmetric $\Delta$-complex $X$, and showed that it computes the reduced rational homology of (the geometric realization of) $X$.  In the case $X=\Delta_{g,n}$, we are able to deduce that $C_*(\Delta_{g,n};\Q)$ is quasi-isomorphic to the {\em marked graph complex} $\Ggn$, using Theorem~\ref{thm:contractible}.  We shall define $\Ggn$ precisely in \S\ref{sec:graphs}.  Briefly, it is generated by isomorphism classes of connected, $n$-marked stable graphs $\Gamma$ together with the choice of one of the two possible orientations of $\R^{E(\Gamma)}$. Kontsevich's graph complex $G^{(g)}$ \cite{Kontsevich93, Kontsevich94} occurs as the special case $n = 0$. The markings that we consider are elsewhere called hairs, half-edges, or legs; one difference between $\Ggn$ and many of the the hairy graph complexes in the existing literature \cite{ConantKassabovVogtmann13, ConantKassabovVogtmann15, KhoroshkinWillwacherZivkovic16, TurchinWillwacher17} is that our markings are ordered rather than unordered. Hairy graphs with ordered markings do appear in the work of Tsopm\'en\'e and Turchin on string links \cite{TsopmeneTurchin18a}; they study the more general situation where each marking carries a label from an ordered set $\{1, \ldots, r \}$, as well as the special case where multiple markings may carry the same label \cite[\S2.2.1]{TsopmeneTurchin18b}.  Our $\Ggn$ agrees with the complex denoted $M_g(P^n_2)$ in \cite{TsopmeneTurchin18b}.

\begin{restatable}{theorem}{split}
\label{thm:split}
For $g \geq 1$ and $2g -2 + n \geq 2$, there is a natural surjection of chain complexes
\[
C_* (\Delta_{g,n};\Q)\rightarrow G^{(g,n)},
\]
decreasing degrees by $2g - 1$, inducing isomorphisms on homology $$\widetilde H_{k+2g-1}(\Dgn;\Q) \xrightarrow{\cong}H_k (G^{(g,n)}) $$ for all $k$.
\end{restatable}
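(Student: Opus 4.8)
The strategy, following the $n=0$ argument of \cite{CGP1-JAMS}, is to realize $\Ggn$ as the quotient of the cellular chain complex $C_*(\Dgn;\Q)$ by a subcomplex that Theorem~\ref{thm:contractible}(\ref{it:loop-contract}) shows to be acyclic.

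Recall from \cite{CGP1-JAMS} that $C_p(\Dgn;\Q)$ is spanned by pairs $(G,\omega)$, where $G$ is a connected stable $n$-marked weighted graph of genus $g$ with $p+1$ edges and $\omega$ is an orientation of $\R^{E(G)}$, subject to $(G,-\omega)=-(G,\omega)$ (so a graph with an orientation-reversing automorphism contributes $0$); the differential is the alternating sum of edge contractions, where contracting a loop is interpreted as deleting it and adding $1$ to the weight of its vertex. Let $K_*\subseteq C_*(\Dgn;\Q)$ be the span of those $(G,\omega)$ for which $G$ has a loop or a vertex of positive weight. First I would check that $K_*$ is a subcomplex: contracting a non-loop edge neither destroys an existing loop nor annihilates a positive weight (weights can only merge), while contracting a loop produces a positive weight, so every term of $\partial(G,\omega)$ remains of this type. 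Hence $K_*$ is exactly the cellular chain complex $C_*(\Dgn^{\rm lw};\Q)$ of the subcomplex $\Dgn^{\rm lw}$ of Theorem~\ref{thm:contractible}(\ref{it:loop-contract}). Since that subcomplex is either contractible or empty (in which case $K_*=0$), and since by \cite{CGP1-JAMS} cellular chains compute reduced rational homology, $K_*$ is acyclic in all cases.

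It then remains to identify $C_*(\Dgn;\Q)/K_*$ with $\Ggn$. By construction the quotient is spanned in each degree by pairs $(\Gamma,\omega)$ with $\Gamma$ loopless, weightless, connected, stable and $n$-marked of genus $g$, taken modulo orientation-reversing automorphisms — exactly the generators of $\Ggn$ from \S\ref{sec:graphs} — and the induced differential is the alternating sum over those edges $e$ for which $\Gamma/e$ is still loopless, the remaining contractions having been pushed into $K_*$; I would verify that this coincides with the differential of $\Ggn$. For the degree shift, note that $(\Gamma,\omega)$ with $m=|E(\Gamma)|$ edges lies in degree $m-1$ in $C_*(\Dgn;\Q)$ and has degree $m-2g$ in $\Ggn$ (using $|E(\Gamma)|-|V(\Gamma)|+1=g$), so the quotient map lowers degree by $2g-1$. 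As $K_*$ is acyclic, the long exact sequence of the pair shows this surjection is a quasi-isomorphism; combined with the identification $H_p(C_*(\Dgn;\Q))\cong\widetilde H_p(\Dgn;\Q)$ it yields the stated isomorphisms $\widetilde H_{k+2g-1}(\Dgn;\Q)\xrightarrow{\cong}H_k(\Ggn)$.

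Granting Theorem~\ref{thm:contractible}(\ref{it:loop-contract}), no further homotopy-theoretic input is needed, and I expect the main remaining difficulty to be the purely combinatorial bookkeeping of the last step: matching the signs and orientation conventions of the quotient differential with the definition of $\Ggn$ — in particular handling contractions that create loops or parallel edges, and the vanishing of graphs with orientation-reversing automorphisms — together with checking naturality of the surjection.
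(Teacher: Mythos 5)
Your strategy is essentially the paper's, but the subcomplex you quotient by is too large, and as a result the quotient you get is \emph{not} $\Ggn$. You take $K_*$ to be the span of generators whose underlying graph has a loop \emph{or} a positive vertex weight (i.e.\ the cellular chains of $\Dgn^{\mathrm{lw}}$, using Theorem~\ref{thm:contractible}\eqref{it:loop-contract}), so $C_*(\Dgn;\Q)/K_*$ is spanned only by graphs that are loopless \emph{and} weightless. But the marked graph complex $\Ggn$ defined in \S\ref{ssec:Ggn} is generated by weight-zero graphs \emph{with or without loops}; for instance the paper records that $G^{(1,1)}$ is one-dimensional, generated by a single loop. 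So loops must survive in the quotient. A concrete symptom of the mismatch: for a loopless weight-zero $\Gamma$ with a pair of parallel edges $e, e'$, the contraction $\Gamma/e$ has a loop and therefore dies in your quotient, whereas $[\Gamma/e,\omega|_{\ldots},\pi\circ m]$ is a perfectly good nonzero term in the $\Ggn$-differential~\eqref{eq:the-differential}. The two differentials do not agree.

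The paper quotients by the strictly smaller subcomplex $B^{(g,n)}$, spanned by generators with at least one positive vertex weight, which is the cellular chain complex of $\weight$; the relevant part of Theorem~\ref{thm:contractible} is then item~\eqref{it:weight-contract}, not~\eqref{it:loop-contract}. The quotient $C_*(\Dgn;\Q)/B^{(g,n)}$ is spanned by all weight-zero graphs, loops allowed, and its induced differential is the alternating sum over edge contractions with loop-contractions killed (because contracting a loop creates a positive weight, hence lands in $B^{(g,n)}$). This is exactly the convention for $\Ggn$, so the quotient is isomorphic to $\Ggn$ up to the stated degree shift, and the rest of the argument (acyclicity of $B^{(g,n)}$ via contractibility of $\weight$, long exact sequence of the pair, degree bookkeeping $e-1$ vs.\ $e-2g$) goes through precisely as you outlined. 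If you wished to retain your larger $K_*$, you would additionally need to show that the subcomplex of $\Ggn$ spanned by graphs containing a loop is acyclic; that is true but is an extra, nontrivial step your proposal does not supply.

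Your discussion of the degree shift and the use of the long exact sequence of the pair are correct.
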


\noindent  We recover \cite[Theorem~1.3]{CGP1-JAMS}, in the special case where $n = 0$.   For an analogous result with coefficients in a different local system, see \cite[Proposition~27]{ConantVogtmann03}.

Combining~\eqref{eq:mgn-comparison} and Theorem \ref{thm:split} gives the following.

\begin{corollary} \label{thm:hairy-top-weight}
There is a natural isomorphism 
\[
H_k(G^{(g,n)}) \xrightarrow{\sim} \Gr_{6g-6+2n}^W H^{4g-6+2n-k} (\cM_{g,n}; \Q),
\]
identifying marked graph homology with the top weight cohomology of $\cM_{g,n}$.
\end{corollary}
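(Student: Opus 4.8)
The plan is to obtain the asserted isomorphism as a composite of two isomorphisms that are already available: the weight-filtration identification~\eqref{eq:mgn-comparison} and Theorem~\ref{thm:split}. Nothing new is needed beyond these inputs; the work is purely bookkeeping.

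First I would assemble the composite and track degrees. Theorem~\ref{thm:split} gives, for each $k$, a natural isomorphism $H_k(\Ggn) \xrightarrow{\sim} \widetilde H_{k+2g-1}(\Dgn;\Q)$ (reading its stated map in reverse). I then apply~\eqref{eq:mgn-comparison} with its index --- written there as ``$k-1$'' --- specialized to $k+2g-1$; equivalently, with the substitution $k \mapsto k+2g$. This identifies $\widetilde H_{k+2g-1}(\Dgn;\Q)$ with $\Gr_{6g-6+2n}^W H^{\,6g-6+2n-(k+2g)}(\cM_{g,n};\Q)$, and since $6g-6+2n-(k+2g) = 4g-6+2n-k$, composing the two isomorphisms produces exactly $H_k(\Ggn) \xrightarrow{\sim} \Gr_{6g-6+2n}^W H^{\,4g-6+2n-k}(\cM_{g,n};\Q)$. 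Along the way I would note that the hypotheses $g\ge 1$ and $2g-2+n\ge 2$ are precisely those required for Theorem~\ref{thm:split}, and that~\eqref{eq:mgn-comparison} holds under the same range, so the statement is valid exactly where it is asserted.

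Next I would address the word ``natural'' in the statement. The isomorphism~\eqref{eq:mgn-comparison} is natural because it comes from identifying $\Dgn$ with the boundary complex of $\ocM_{g,n}$ together with the functorial identification of top-weight cohomology of a variety with the reduced rational homology of a normal-crossings boundary complex; the isomorphism of Theorem~\ref{thm:split} is natural because the surjection $C_*(\Dgn;\Q)\to\Ggn$ is constructed canonically from the symmetric $\Delta$-complex structure of $\Dgn$. Consequently both, and hence their composite, are equivariant for the $S_n$-action permuting the marked points --- which is the form of naturality used in the applications, e.g.\ Corollary~\ref{thm:genus1} --- and I would phrase the corollary and its proof at this level of generality.

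I do not expect any genuine obstacle: the statement is a formal consequence of results already established, and the only subtleties are the arithmetic of the two independent degree shifts (the $-1$ relating the reduced homology of $\Dgn$ to the indexing of its boundary complex, and the $+(2g-1)$ coming from the marked-graph-complex comparison) and making sure ``natural'' is read consistently. The substantive content lies entirely in Theorem~\ref{thm:split} --- and behind it Theorem~\ref{thm:contractible} --- together with~\eqref{eq:mgn-comparison}, all of which are taken as given.
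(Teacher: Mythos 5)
Your proposal is correct and is essentially the paper's own proof: the paper states Corollary~\ref{thm:hairy-top-weight} immediately after Theorem~\ref{thm:split} with the one-line derivation ``Combining~\eqref{eq:mgn-comparison} and Theorem~\ref{thm:split} gives the following,'' which is exactly your composite of the two isomorphisms. Your degree bookkeeping ($6g-6+2n-(k+2g)=4g-6+2n-k$) and the hypothesis check match what the paper intends; the remarks on naturality via $S_n$-equivariance, while not spelled out in the paper's text, are a faithful reading of what ``natural'' means there.
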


Corollary~\ref{thm:hairy-top-weight} allows for an interesting application from moduli spaces back to graph complexes: applying known vanishing results for $\cM_{g,n}$, we obtain the following theorem for marked graph homology.

\begin{restatable}{theorem}{hgvanishing} \label{thm:hgvanishing}
The marked graph homology $H_{k}(G^{(g,n)})$ vanishes for $k < \max \{ 0, n-2 \}$.  Equivalently, $\widetilde H_k(\Dgn;\Q)$ vanishes for $k < \max\{ 2g - 1, 2g - 3 + n \}$.
\end{restatable}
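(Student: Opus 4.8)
The plan is to deduce the statement from the weight comparison \eqref{eq:mgn-comparison} together with known vanishing theorems for the rational cohomology of $\cM_{g,n}$. By Theorem~\ref{thm:split} the two displayed vanishing statements are equivalent---one has $\widetilde H_{j}(\Dgn;\Q)\cong H_{j-2g+1}(\Ggn)$, and $2g-1+\max\{0,n-2\}=\max\{2g-1,\,2g-3+n\}$---so it is enough to prove that $\widetilde H_j(\Dgn;\Q)$ vanishes for $j<\max\{2g-1,\,2g-3+n\}$.

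Next I would rewrite the left-hand side using \eqref{eq:mgn-comparison} with $k=j+1$:
\[
\widetilde H_j(\Dgn;\Q)\;\cong\;\Gr^{W}_{6g-6+2n}\,H^{\,6g-7+2n-j}(\cM_{g,n};\Q).
\]
Thus the theorem amounts to the vanishing of $\Gr^{W}_{6g-6+2n}H^{d}(\cM_{g,n};\Q)$ for all $d\ge 6g-6+2n-\max\{2g-1,\,2g-3+n\}$, i.e.\ for $d\ge 4g-3+n$ when $n\ge 2$, and for $d\ge 4g-5+2n$ (that is, $d\ge 4g-5$ if $n=0$ and $d\ge 4g-3$ if $n=1$) when $n\le 1$. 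When $n\ge 2$ this range lies strictly above the virtual cohomological dimension $4g-4+n$ of $\cM_{g,n}$, so by Harer's computation of the vcd the entire group $H^{d}(\cM_{g,n};\Q)$ already vanishes; this disposes of every case with $n\ge 2$, including $g=1$ (where the vcd is $n$).

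It remains to treat $n\in\{0,1\}$, where necessarily $g\ge 2$ and the target degree is exactly the virtual cohomological dimension; here the soft dimension bound is one degree short, and one must use the deeper vanishing $H^{\mathrm{vcd}}(\cM_{g,n};\Q)=0$. For $n=0$ this is precisely the theorem of Church, Farb, and Putman \cite{ChurchFarbPutman14} that $H^{4g-5}(\cM_g;\Q)=0$ for $g\ge 2$. For $n=1$ I would deduce the needed vanishing $H^{4g-3}(\cM_{g,1};\Q)=0$ from the Leray spectral sequence of the universal curve $\pi\colon\cM_{g,1}\to\cM_g$, whose fibre is a genus-$g$ surface: $H^{4g-3}(\cM_{g,1};\Q)$ is a subquotient of $\bigoplus_{p+q=4g-3}H^{p}(\cM_g;R^{q}\pi_*\Q)$, and its three potentially nonzero summands $H^{4g-3}(\cM_g;\Q)$, $H^{4g-4}(\cM_g;R^{1}\pi_*\Q)$, and $H^{4g-5}(\cM_g;\Q)(-1)$ vanish---the first two by Harer's vcd bound (which is valid with arbitrary local coefficients), the last by \cite{ChurchFarbPutman14}.

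The main obstacle is exactly this last, boundary-degree input: for $n\le 1$ the assertion is genuinely sharper than anything following from the virtual cohomological dimension of $\cM_{g,n}$ alone, so the proof must invoke the substantially deeper vanishing theorem $H^{4g-5}(\cM_g;\Q)=0$. Everything else---the translation among $\Ggn$, $\Dgn$, and $\cM_{g,n}$ provided by Theorem~\ref{thm:split} and \eqref{eq:mgn-comparison}, the arithmetic of the degree bounds, and the Leray argument comparing $\cM_{g,1}$ with $\cM_g$---is routine.
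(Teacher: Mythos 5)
Your proof is correct and follows the same overall strategy as the paper: translate via~\eqref{eq:mgn-comparison} and Theorem~\ref{thm:split} into a vanishing statement for top-weight cohomology of $\cM_{g,n}$, then apply Harer's virtual cohomological dimension bound together with a deeper vanishing theorem at the boundary degree. There are two differences of detail. For $g=1$ the paper invokes the explicit description of $\Delta_{1,n}$ from Theorem~\ref{thm:boundary}, whereas you observe that the soft vcd bound (vcd $\cM_{1,n}=n$) already suffices; both work, and yours avoids a separate case. For $n=1$ the paper simply cites \cite{ChurchFarbPutman12} directly for $H^{4g-3}(\cM_{g,1};\Q)=0$, whereas you rederive it from the $n=0$ vanishing via the Leray (equivalently, Lyndon--Hochschild--Serre) spectral sequence of the universal curve $\pi\colon\cM_{g,1}\to\cM_g$, using that the vcd bound holds for arbitrary rational local systems; this is a valid, self-contained reduction. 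One correction: the theorem $H^{4g-5}(\cM_g;\Q)=0$ is from \cite{ChurchFarbPutman12}, not \cite{ChurchFarbPutman14}---the latter is the conjectures paper referenced in the introduction, not the vanishing paper.
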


\noindent  
Theorem~\ref{thm:hgvanishing} generalizes a theorem of Willwacher for $n = 0$.  See \cite[Theorem~1.1]{Willwacher15} and \cite[Theorem~1.4]{CGP1-JAMS}.
\bigskip

{\bf A transfer homomorphism.} 
It may be deduced from \cite[Theorem~1]{TurchinWillwacher17} and Theorem~\ref{thm:split} above
that $\widetilde{H}_k(\Dg;\Q)$ can be identified with a summand of $\widetilde H_k(\Delta_{g,1};\Q)$.  In the notation of op.cit., this is essentially the special case $n = m = 0$ and $h=1$ (their $\mathrm{HGC}_{0,0}^{g,1}$ is then a cochain complex isomorphic to a shift of our $(G^{(g,1)})^\vee$, while their  $\mathrm{GC}^g_{S^0 H_1}$ is isomorphic to our $(G^{(g)})^\vee$).  In \S\ref{sec:transfer} we give a proof of this in our setup, including an explicit construction of the splitting on the level of cellular chains of the tropical moduli spaces.

\begin{restatable}{theorem}{transfer}
\label{thm:transfer}
For $g \geq 2$, there is a natural homomorphism of cellular chain complexes
\[
t \colon  C_*(\Dg;\Q) \rightarrow C_*(\Delta_{g,1};\Q)
\]
which descends to a homomorphism $G^{(g)}\rightarrow G^{(g,1)}$ and induces injections $\widetilde H_k(\Dg;\Q) \hookrightarrow \widetilde H_k(\Delta_{g,1};\Q)$ and $H_k(G^{(g)}) \hookrightarrow H_k(G^{(g,1)})$, for all $k$.
\end{restatable}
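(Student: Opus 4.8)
The plan is to write $t$ down by an explicit formula on cellular chains, check directly that it commutes with the differentials, and then produce a one-sided inverse coming from the forgetful morphism; injectivity is then forced by the hypothesis $g\ge 2$. For a stable weighted graph $G$ of genus $g$, representing a cell of $\Dg$, and a vertex $v\in V(G)$, let $G_v$ denote the stable $1$-marked graph obtained by attaching the marking at $v$. Stability of $G$ gives $\deg(v)+2w(v)-2\ge 1$ for every vertex $v$, so $G_v$ is again stable; moreover $E(G_v)=E(G)$, so an orientation $\omega$ of $\R^{E(G)}$ is also an orientation of $\R^{E(G_v)}$. I would set
\[
t(G,\omega)\;=\;\sum_{v\in V(G)}\bigl(\deg(v)+2w(v)-2\bigr)\,(G_v,\omega)\ \in\ C_*(\Delta_{g,1};\Q).
\]
That this respects the relations defining $C_*$ is immediate: an automorphism $\phi$ of $G$ carries $(G_v,\omega)$ to $(G_{\phi(v)},\phi_*\omega)$ and fixes the coefficient $\deg(v)+2w(v)-2$.

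The key step is that $t$ is a chain map. The differential on $C_*$ is the signed sum over edge contractions, and for $e\in E(G)=E(G_v)$ one has $G_v/e=(G/e)_v$ when $v$ is not an endpoint of $e$ and when $e$ is a loop at $v$, while $G_v/e=(G/e)_{\bar e}$ when $e=\{a,b\}$ is a non-loop edge with $v\in\{a,b\}$, where $\bar e$ is the merged vertex. Writing $c(v)=\deg(v)+2w(v)-2$, the identity $\partial t=t\partial$ reduces to: $c$ is additive under merging, $c(\bar e)=c(a)+c(b)$; and $c(v)$ is unchanged when a loop at $v$ is contracted (then $\deg(v)$ drops by $2$ and $w(v)$ rises by $1$). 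Both hold, and indeed these requirements force $c$ up to a scalar, which is how one is led to the formula; the signs match because $\omega$ is literally unchanged. Since attaching a marking at a vertex creates neither loops nor positive vertex weights, $t$ carries the subcomplex of $C_*(\Dg;\Q)$ spanned by graphs with loops or positive vertex weights into the analogous subcomplex of $C_*(\Delta_{g,1};\Q)$, so by Theorem~\ref{thm:split} it descends to $\bar t\colon G^{(g)}\to G^{(g,1)}$.

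To get injectivity I would use the forgetful morphism. Forgetting the marked point and stabilizing defines a piecewise-linear map $\pi\colon\Delta_{g,1}\to\Dg$ (the tropicalization of the forgetful morphism $\ocM_{g,1}\to\ocM_g$, which exists since $g\ge 2$), inducing a chain map $\pi_*\colon C_*(\Delta_{g,1};\Q)\to C_*(\Dg;\Q)$: it sends a $1$-marked graph $H$ to $H$ with the marking deleted when the marked vertex is not bivalent of weight $0$, and to $0$ otherwise (these are exactly the cells that $\pi$ collapses to lower dimension, since forgetting the marking there destabilizes the marked vertex and forces its two incident edges to merge). That this formula defines a chain map is a routine check; the one point to watch is that, on such a collapsed cell, the two faces obtained by contracting the two edges at the bivalent marked vertex become identified with opposite signs after the marking is forgotten. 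Now if $G$ is a stable weighted graph of genus $g$ and $v\in V(G)$, stability of $G$ forbids $v$ from being bivalent of weight $0$, so $\pi_*(G_v)=G$, and therefore
\[
\pi_*\,t(G,\omega)\;=\;\sum_{v\in V(G)}\bigl(\deg(v)+2w(v)-2\bigr)\,(G,\omega)\;=\;(2g-2)\,(G,\omega),
\]
using $\sum_{v}(\deg(v)+2w(v)-2)=2\bigl(|E(G)|-|V(G)|\bigr)+2\sum_v w(v)=2g-2$. Hence $\pi_*\circ t=(2g-2)\cdot\mathrm{id}$ on $C_*(\Dg;\Q)$. Since $g\ge 2$ this scalar is invertible in $\Q$, so $t$ is a split injection of chain complexes and induces injections $\widetilde{H}_k(\Dg;\Q)\hookrightarrow\widetilde{H}_k(\Delta_{g,1};\Q)$ for all $k$; transporting along the quasi-isomorphisms of Theorem~\ref{thm:split} yields the injections $H_k(G^{(g)})\hookrightarrow H_k(G^{(g,1)})$ as well. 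The hypothesis $g\ge 2$ is used only to make $\pi$ available and to invert $2g-2$; for $g=1$ the relevant homology vanishes anyway.

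I expect the one genuinely non-formal point to be recognizing the correct multiplicities $\deg(v)+2w(v)-2$ --- the ``stable Euler characteristic'' weighting --- which is exactly what makes $\partial t=t\partial$ hold. The remaining ingredients (well-definedness with respect to automorphisms, the sign bookkeeping for the contraction differential, the verification that the combinatorial formula for $\pi_*$ is a chain map including its vanishing on collapsed cells, and the behavior of all these maps under contraction of loops and of edges with parallel partners) are routine, but must be handled with care.
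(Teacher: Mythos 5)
Your proposal is correct and matches the paper's argument almost step for step: the same weight function $\chi(v)=2w(v)-2+\operatorname{val}(v)$ (your $\deg(v)+2w(v)-2$), the same verification that $\chi$ is additive under edge contraction, the same forgetful chain map $\pi$ sending a marked graph to $0$ when the marked vertex is weight-$0$ bivalent, and the same computation $\pi\circ t=(2g-2)\,\mathrm{id}$ via $\sum_v\chi(v)=2g-2$. You have also correctly isolated the one subtle point the paper flags, namely that in the unstable case the two boundary terms from contracting the two edges at the marked bivalent vertex cancel after forgetting the marking.
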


\noindent The homomorphism $t$ is obtained as a weighted sum over all possible vertex markings, and may thus be seen as analogous to a transfer map. 
The resulting injection on homology is particularly interesting because $\bigoplus_g \widetilde H_{2g-1}(\Dg;\Q)$ is large: its graded dual is isomorphic to the Grothendieck-Teichm\"{u}ller Lie algebra, as discussed in \cite{CGP1-JAMS}. Combining with~\eqref{eq:mgn-comparison}, we obtain the following.

\begin{restatable}{corollary}{mgoneexp}
\label{cor:mgoneexp}
We have
$$\dim \Gr^W_{6g-4} H^{4g-4}(\cM_{g,1};\Q) > \beta^g +\text{constant}$$
for any $\beta < \beta_0 \approx 1.32\ldots$, where $\beta_0$ is the real root of $t^3-t-1=0.$
\end{restatable}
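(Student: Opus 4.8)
The plan is to obtain the estimate by transporting the corresponding bound for $\cM_g$ from \cite{CGP1-JAMS} to $\cM_{g,1}$ through the comparison isomorphism \eqref{eq:mgn-comparison} together with the transfer injection of Theorem~\ref{thm:transfer}; in effect the corollary is a formal consequence of results already in hand, and the work is bookkeeping.

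First I would pin down the relevant instances of \eqref{eq:mgn-comparison}. Taking $n=1$ and $k=2g$ gives a natural isomorphism $\widetilde H_{2g-1}(\Delta_{g,1};\Q)\xrightarrow{\cong}\Gr^W_{6g-4}H^{4g-4}(\cM_{g,1};\Q)$, so the left-hand dimension is exactly the quantity to be bounded. Taking instead $n=0$ and $k=2g$ gives $\widetilde H_{2g-1}(\Dg;\Q)\xrightarrow{\cong}\Gr^W_{6g-6}H^{4g-6}(\cM_g;\Q)$. I would then recall that the bound $\dim H^{4g-6}(\cM_g;\Q)>1.32^g+\text{constant}$ of \cite{CGP1-JAMS} is proved precisely as a lower bound on this top-weight graded piece, equivalently on $\widetilde H_{2g-1}(\Dg;\Q)$, and comes from the fact that $\bigoplus_g\widetilde H_{2g-1}(\Dg;\Q)$ has graded dual the Grothendieck-Teichm\"uller Lie algebra, whose exponential growth rate is the real root $\beta_0$ of $t^3-t-1$. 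Thus $\dim\widetilde H_{2g-1}(\Dg;\Q)>\beta^g+\text{constant}$ for every $\beta<\beta_0$.

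Next I would apply Theorem~\ref{thm:transfer} in homological degree $2g-1$, which is the only place the hypothesis $g\ge 2$ enters; it produces an injection $\widetilde H_{2g-1}(\Dg;\Q)\hookrightarrow\widetilde H_{2g-1}(\Delta_{g,1};\Q)$, and hence
\[
\dim\Gr^W_{6g-4}H^{4g-4}(\cM_{g,1};\Q)=\dim\widetilde H_{2g-1}(\Delta_{g,1};\Q)\ge\dim\widetilde H_{2g-1}(\Dg;\Q).
\]
Combining with the bound from the previous step yields $\dim\Gr^W_{6g-4}H^{4g-4}(\cM_{g,1};\Q)>\beta^g+\text{constant}$ for every $\beta<\beta_0$, with the same additive constant, which is the claim.

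Because both substantive ingredients --- the chain-level transfer map of Theorem~\ref{thm:transfer} and the Grothendieck-Teichm\"uller lower bound of \cite{CGP1-JAMS} --- are already available, I do not expect a real obstacle. The only points requiring care are the index arithmetic in \eqref{eq:mgn-comparison} (that $k=2g$ lands in cohomological degree $4g-4$ when $n=1$ and $4g-6$ when $n=0$, with $\widetilde H_{2g-1}$ on the tropical side in both cases), and being explicit that the cited estimate is genuinely about the weight-$(6g-6)$ graded piece of $H^{4g-6}(\cM_g;\Q)$, i.e.\ about $\widetilde H_{2g-1}(\Dg;\Q)$, rather than about the full cohomology group.
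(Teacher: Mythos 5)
Your proof is correct and follows exactly the same route as the paper: apply the comparison isomorphism \eqref{eq:mgn-comparison} with $n=0,1$ and $k=2g$, invoke the \cite{CGP1-JAMS} lower bound on $\widetilde H_{2g-1}(\Delta_g;\Q)$ (equivalently the top-weight graded piece), and transport it to $\widetilde H_{2g-1}(\Delta_{g,1};\Q)$ via the transfer injection of Theorem~\ref{thm:transfer}. The paper merely packages the intermediate bound $\dim\widetilde H_{2g-1}(\Delta_{g,1};\Q)>\beta^g+\text{const}$ as a separate Corollary (\ref{cor:hdg1}) before combining with Corollary~\ref{cor:mgn-top-wt}, but the content is identical.
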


Corollary~\ref{cor:mgoneexp} can also be deduced purely algebro-geometrically from the analogous result for $\cM_g$ proved in \cite{CGP1-JAMS}, without using the transfer homomorphism $t$.  See Remark~\ref{rem:mg1}.  The following result is deduced by an easy application of the topological Gysin sequence, see Corollary~\ref{cor:1.9-strengthened}.

\begin{corollary}\label{cor:zero-section-complement}
  Let $\mathrm{Mod}_{g,1}$ denote the mapping class group of a genus $g$ surface with one parametrized boundary component.  Then
  $$\dim H^{4g-3}(\mathrm{Mod}_{g,1};\Q) > \beta^g +\text{constant}$$
  for any $\beta < \beta_0 \approx 1.32\ldots$ as above.
\end{corollary}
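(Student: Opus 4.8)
The plan is to realize $B\,\mathrm{Mod}_{g,1}$ as the complement of the zero section in a line bundle over $\cM_{g,1}$, and then read off the relevant cohomology from the Gysin sequence, feeding in Corollary~\ref{cor:mgoneexp}.

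First I would record the identification of the space. Capping the parametrized boundary circle of a genus $g$ surface with a once-marked disk presents $\mathrm{Mod}_{g,1}$ as a central extension
\[
1\to\Z\to\mathrm{Mod}_{g,1}\to\Gamma_{g,1}\to1
\]
by the subgroup generated by the boundary Dehn twist, where $\Gamma_{g,1}$ is the mapping class group of a genus $g$ surface with one marked point. Since $\mathrm{Mod}_{g,1}$ is torsion-free, $B\,\mathrm{Mod}_{g,1}$ is an honest aspherical space, homotopy equivalent to the total space of the associated circle bundle over $\cM_{g,1}$, namely the unit (co)tangent circle bundle at the marked point; concretely, $B\,\mathrm{Mod}_{g,1}$ is homotopy equivalent to the complement $\mathbb{L}^\times$ of the zero section in the line bundle $\mathbb{L}$ on $\cM_{g,1}$ whose fiber over $[C,p]$ is $T_pC$ (or its dual $T_p^*C$, the $\psi$-class line — it will not matter which). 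Because the marking and framing rigidify automorphisms, $\mathbb{L}^\times$ is a smooth variety of complex dimension $3g-1$, so $H^\ast(\mathrm{Mod}_{g,1};\Q)\cong H^\ast(\mathbb{L}^\times;\Q)$ carries a functorial mixed Hodge structure.

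Next I would run the Gysin (localization) sequence for the open inclusion $\mathbb{L}^\times\hookrightarrow\mathbb{L}$ whose closed complement is the zero section $Z\cong\cM_{g,1}$ of codimension $1$. Since $\mathbb{L}$ retracts onto $Z$, purity gives a long exact sequence of mixed Hodge structures
\[
\cdots\to H^{k-2}(\cM_{g,1};\Q)(-1)\xrightarrow{\cup e} H^{k}(\cM_{g,1};\Q)\to H^{k}(\mathbb{L}^\times;\Q)\xrightarrow{\,\res\,} H^{k-1}(\cM_{g,1};\Q)(-1)\to\cdots,
\]
with $e\in H^2(\cM_{g,1};\Q)$ the Euler class, an algebraic class of weight $2$. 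Take $k=4g-3$ and apply the exact functor $\Gr^W_{6g-2}$. Since $\cM_{g,1}$ is smooth of complex dimension $3g-2$, the weights on $H^{j}(\cM_{g,1};\Q)$ are bounded above by $\min(2j,6g-4)$, which equals $6g-4<6g-2$ for $j=4g-3$ and for $j=4g-2$; hence the terms $\Gr^W_{6g-2}H^{4g-3}(\cM_{g,1};\Q)$ and $\Gr^W_{6g-2}H^{4g-2}(\cM_{g,1};\Q)$ both vanish, and $\res$ induces an isomorphism
\[
\Gr^W_{6g-2}H^{4g-3}(\mathbb{L}^\times;\Q)\ \xrightarrow{\ \cong\ }\ \Gr^W_{6g-2}\!\left(H^{4g-4}(\cM_{g,1};\Q)(-1)\right)=\Gr^W_{6g-4}H^{4g-4}(\cM_{g,1};\Q).
\]

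Finally, combining $H^{4g-3}(\mathrm{Mod}_{g,1};\Q)\cong H^{4g-3}(\mathbb{L}^\times;\Q)$, the trivial estimate $\dim H^{4g-3}(\mathbb{L}^\times;\Q)\ge\dim\Gr^W_{6g-2}H^{4g-3}(\mathbb{L}^\times;\Q)$, and Corollary~\ref{cor:mgoneexp} yields
\[
\dim H^{4g-3}(\mathrm{Mod}_{g,1};\Q)\ \ge\ \dim\Gr^W_{6g-4}H^{4g-4}(\cM_{g,1};\Q)\ >\ \beta^g+\text{constant}
\]
for every $\beta<\beta_0$. The only nonformal ingredient is the first step — identifying $B\,\mathrm{Mod}_{g,1}$ with the zero-section complement $\mathbb{L}^\times$ and checking that the latter is an algebraic variety, so that the Gysin sequence becomes one of mixed Hodge structures; granting that, the argument is pure weight bookkeeping. (Equivalently one may use the topological Gysin sequence of the circle bundle $S^1\to B\,\mathrm{Mod}_{g,1}\to\cM_{g,1}$ directly, using only that its Euler class has weight $2$.)
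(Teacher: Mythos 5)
Your argument is correct, but it takes a genuinely different route from the paper's.

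The paper runs the topological (rational) Gysin sequence of the orbifold circle bundle $S^1 \to B\mathrm{Mod}_{g,1} \to B\mathrm{Mod}_g^1$ and, to get surjectivity of the pushforward $\pi_*\colon H^{4g-3}(B\mathrm{Mod}_{g,1};\Q)\to H^{4g-4}(B\mathrm{Mod}_g^1;\Q)\cong H^{4g-4}(\cM_{g,1};\Q)$, invokes Harer's theorem that $\mathrm{Mod}_g^1$ has virtual cohomological dimension $4g-3$, so that $H^{4g-2}(B\mathrm{Mod}_g^1;\Q)=0$. The bound then follows from Corollary~\ref{cor:mgoneexp} exactly as in your last step. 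You instead upgrade the Gysin sequence to one of mixed Hodge structures, using that $\mathbb{L}^\times$ is an honest smooth variety (automorphisms of a pointed curve $(C,p)$ act faithfully on $T_pC$, so the $\C^\times$-bundle is rigidified), and then do pure weight bookkeeping: the weight-$(6g-2)$ pieces of $H^{4g-3}(\cM_{g,1};\Q)$ and $H^{4g-2}(\cM_{g,1};\Q)$ both vanish because weights on $H^j$ of a smooth variety (or DM stack) of complex dimension $3g-2$ are at most $\min(2j,6g-4)$, forcing the residue map to be an isomorphism on $\Gr^W_{6g-2}$.

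The trade-offs are worth noting. Your route avoids Harer's vcd theorem entirely, replacing it by the general weight bounds for smooth varieties together with the (nontrivial but classical) facts that $\mathbb{L}^\times$ is a scheme and a $K(\mathrm{Mod}_{g,1},1)$. The paper's route is purely topological once Harer's theorem is granted, and it also yields the stronger Corollary~\ref{cor:1.9-strengthened} for \emph{any} group $G$ sitting in a central extension $\Z\to G\to\mathrm{Mod}_g^1$, not only the specific geometric extension realized by the zero-section complement; your argument is tied to that particular extension. Both arguments are valid for the statement at hand.

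One small point: in the final reduction you could also just observe, as the paper implicitly does, that $\dim\Gr^W_{6g-4}H^{4g-4}(\cM_{g,1};\Q)\le\dim H^{4g-4}(\cM_{g,1};\Q)$, so the weight-graded Gysin isomorphism is slightly more than you actually need; but it does make the bookkeeping transparent.
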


\bigskip

\noindent \textbf{Acknowledgments.}  We are grateful to E.~Getzler, A.~Kupers, D.~Petersen, O.~Randal-Williams, O.~Tommasi, K.~Vogtmann, and J.~Wiltshire-Gordon for helpful conversations related to this work.  MC was supported by NSF DMS-1204278, NSF CAREER DMS-1844768, NSA H98230-16-1-0314, and a Sloan Research Fellowship.  SG was supported by the European Research Council (ERC) under the European Union's Horizon 2020 research and innovation programme (grant agreement No 682922), the EliteForsk Prize, and by the Danish National Research Foundation (DNRF151 and DNRF151). SP was supported by NSF DMS-1702428 and a Simons Fellowship.  He thanks UC Berkeley and MSRI for their hospitality and ideal working conditions.

\section{Marked graphs and moduli of tropical curves}  \label{sec:graphs}

In this section, we recall the construction of the topological
space $\Dgn$ as a moduli space for genus $g$ stable, $n$-marked tropical curves.  The construction in \cite[\S2]{CGP1-JAMS} is the special case $n = 0$.  Following the usual convention, we write $\Dg=\Delta_{g,0}$.

\subsection{Marked weighted graphs and tropical curves} \label{subsec:J-g-n}

All graphs in this paper are connected, with loops and parallel edges allowed.
Let $G$ be a finite graph, with vertex set $V(G)$ and edge set $E(G)$.  A {\em weight function} is an arbitrary function $w\col V(G)\rightarrow \Z_{\ge 0}$. The pair $(G,w)$ is called a {\em weighted graph}.   Its {\em genus} is
\begin{equation*}
  g(G,w)= b_1(G) + \sum_{v\in V(G)}\! w(v),
\end{equation*}
where $b_1(G) = |E(G)|-|V(G)|+1$ is the first Betti number of $G$.
The {\em core} of a weighted graph $(G,w)$ is the smallest connected
subgraph of $G$ that contains all cycles of $G$ and all vertices of
positive weight, if $g(G,w)>0$, or the empty subgraph if $g(G,w)=0$.

An {\em $n$-marking} on $G$ is a map
$m\col\{1,\ldots,n\}\rightarrow V(G)$.  In figures, we depict
the marking as a set of $n$ labeled half-edges  or legs attached to the
vertices of $G$.

An {\em $n$-marked weighted graph} is a triple $\Gmw=(G,m,w)$, where
$(G,w)$ is a weighted graph and $m$ is an $n$-marking. The {\em
  valence} of a vertex $v$ in a marked weighted graph, denoted
$\val(v)$, is the number of half-edges of $G$ incident to $v$ plus the
number of marked points at $v$.  In other words, a loop edge based at
$v$ counts twice towards $\val(v)$, once for each end, an ordinary
edge counts once, and a marked point counts once (as suggested by the interpretation of markings as half-edges). We say that $\Gmw $
is {\em stable} if for every $v\in V(G)$, $$2w(v) -2 + \val(v) > 0.$$
Equivalently, $\Gmw$ is stable if and only if every vertex of weight 0
has valence at least 3, and every vertex of weight 1 has valence at
least 1.

\subsection{The category $\J_{g,n}$}\label{sec:jgn} The
stable $n$-marked graphs of genus $g$ form the objects of a
category which we denote $\J_{g,n}$.  The morphisms in this
category are compositions of contractions of edges $G\rightarrow G/e$
and isomorphisms $G \rightarrow G'$.  For the sake of removing any
ambiguity about what that might mean, we
now give a precise definition.

Formally, a graph $G$ is a finite set $X(G) = V(G) \sqcup H(G)$
(of ``vertices'' and ``half-edges''), together with two functions
$s_G, r_G\colon X(G) \to X(G)$ satisfying $s_G^2 = \mathrm{id}$ and 
$r_G^2 = r_G$, such that
$$\{x \in X(G) : r_G(x) = x\} = \{x \in X(G) : s_G(x) = x\} =
V(G).$$ Informally: $s_G$ sends a half-edge to its other half, while
$r_G$ sends a half-edge to its incident vertex.  We let
$E(G) = H(G)/(x \sim s_G(x))$ be the set of edges.  The definitions of
$n$-marking, weights, genus, and stability are as stated in the previous subsection.

The objects of the category $\J_{g,n}$ are all connected stable $n$-marked graphs of genus $g$.
For an object $\Gmw = (G,m,w)$ we shall write $V(\Gmw)$ for $V(G)$
and similarly for $H(\Gmw)$, $E(\Gmw)$, $X(\Gmw)$, $s_\Gmw$ and
$r_\Gmw$.  Then a morphism $\Gmw \to \Gmw'$ is a function $f\colon
X(\Gmw) \to X(\Gmw')$ with the property that
\begin{equation*}
f \circ r_\Gmw = r_{\Gmw'} \circ f \text{ and }f \circ s_\Gmw =
s_{\Gmw'} \circ f,
\end{equation*}
and subject to the following three requirements:  

\begin{itemize}
\item Noting first that $f(V(\Gmw))\subset V(\Gmw')$, we require
  $f\circ m = m'$, where $m$ and $m'$ are the respective marking
  functions of $\Gmw$ and $\Gmw'$.
\item Each $e \in H(\Gmw')$ determines the subset
  $f^{-1}(e) \subset X(\Gmw)$ and we require that it consists of
  precisely one element (which will then automatically be in
  $H(\Gmw)$).
\item Each $v \in V(\Gmw')$ determines a subset
  $S_v = f^{-1}(v) \subset X(\Gmw)$ and
  $\mathbf{S}_v = (S_v,r\vert_{S_v}, s\vert_{S_v})$ is a graph; we
  require that it be connected and have
  $g(\mathbf{S}_v,w\vert_{\mathbf{S}_v}) = w(v)$.
\end{itemize}

\noindent Composition of morphisms $\Gmw \to \Gmw' \to \Gmw''$ in
$\J_{g,n}$ is given by the corresponding composition
$X(\Gmw) \to X(\Gmw') \to X(\Gmw'')$ in the category of sets.

Our definition of graphs and the morphisms between them is standard in
the study of moduli spaces of curves and agrees, in essence, with the
definitions in \cite[X.2]{ACG11} and \cite[\S3.2]{acp}, as well as
those in \cite{KontsevichManin94} and \cite{GetzlerKapranov98}.  

\begin{remark}
  We also note that any morphism $\Gmw \to \Gmw'$ can be alternatively
  described as an isomorphism following a finite sequence of
  \emph{edge collapses}: if $e \in E(\Gmw)$ there is a canonical morphism
  $\Gmw \to \Gmw/e$ where $\Gmw/e$ is the marked weighted graph
  obtained from $\Gmw$ by collapsing $e$ together with its two
  endpoints to a single vertex $[e] \in \Gmw/e$.  If $e$ is not a
  loop, the weight of $[e]$ is the sum of the weights of the endpoints
  of $e$ and if $e$ is a loop the weight of $[e]$ is one more than the
  old weight of the end-point of $e$.  If
  $S = \{e_1, \dots, e_k\} \subset E(\Gmw)$ there are iterated edge
  collapses $\Gmw \to \Gmw/e_1 \to (\Gmw/e_1)/e_2 \to \dots$ and any
  morphism $\Gmw \to \Gmw'$ can be written as such an iteration
  followed by an isomorphism from the resulting quotient of $\Gmw$ to
  $\Gmw'$.
\end{remark}

We shall say that $\Gmw$ and $\Gmw'$ have the same \emph{combinatorial
  type} if they are isomorphic in $\J_{g,n}$.  In fact there are only
finitely many isomorphism classes of objects in $\J_{g,n}$, since any
object has at most $6g-6+2n$ half-edges and $2g-2+n$ vertices and for
each possible set of vertices and half-edges there are finitely many
ways of gluing them to a graph, and finitely many possibilities for
the $n$-marking and weight function.  In order to get a \emph{small}
category $\J_{g,n}$ we shall tacitly pick one object in each
isomorphism class and pass to the full subcategory on those objects.
Hence $\J_{g,n}$ is a \emph{skeletal} category.  (However, we shall try to 
use language compatible with any choice of small
equivalent subcategory of $\J_{g,n}$.)  It is clear that all Hom sets in
$\J_{g,n}$ are finite, so $\J_{g,n}$ is in fact a finite category.

Replacing $\J_{g,n}$ by some choice of skeleton has the effect that if
$\mathbf{G}$ is an object of $\J_{g,n}$ and $e \in E(\mathbf{G})$ is an
edge, then the marked weighted graph $\Gmw/e$ is likely not equal to
an object of $\J_{g,n}$.  Given $\mathbf{G}$ and $e$, there is a unique
\emph{morphism} $q \colon  \mathbf{G} \to \mathbf{G}'$ in $\J_{g,n}$ factoring
through an isomorphism $\Gmw/e \to \Gmw'$.  As usual it is the pair
$(\mathbf{G}',q)$ which is unique and not the isomorphism
$\Gmw/e \cong \Gmw'$.  By an abuse of notation, we shall henceforth
write $\mathbf{G}/e \in \J_{g,n}$ for the codomain of this unique
morphism, and similarly $G/e$ for its underlying graph.

\begin{definition}\label{def:E-and-H-as-functors}
  Let us define functors
  \begin{equation*}
    H,E \colon  \J_{g,n}^\mathrm{op} \to (\text{Finite sets},
    \text{injections})
  \end{equation*}
  as follows.  On objects, $H(\Gmw) = H(G)$ is the set of half-edges
  of $\Gmw = (G,m,w)$ as defined above.  A morphism
  $f \colon  \Gmw \to \Gmw'$ determines an injective function
  $H(f) \colon  H(\Gmw') \to H(\Gmw)$, sending $e' \in H(\Gmw')$ to the
  unique element $e \in H(\Gmw)$ with $f(e) = e'$.  We shall
  write $f^{-1} = H(f) \colon H(\Gmw') \to H(\Gmw)$ for this map.
  This clearly preserves composition and identities, and hence defines
  a functor.  Similarly for $E(\Gmw) = H(\Gmw)/(x \sim s_G(x))$ and
  $E(f)$.
\end{definition}

\subsection{Moduli space of tropical curves}  

We now recall the construction of moduli spaces of stable tropical
curves, as the colimit of a diagram of cones parametrizing possible
lengths of edges for each fixed combinatorial type, following \cite{BrannettiMeloViviani11, Caporaso13, acp}.

Fix integers $g,n\ge 0$ with $2g-2+n > 0$. A \emph{length function} on
$\Gmw = (G,m,w) \in \J_{g,n}$ is an element
$\ell \in \R_{>0}^{E(\Gmw)}$, and we shall think geometrically of
$\ell(e)$ as the \emph{length} of the edge $e \in E(\Gmw)$.  An
$n$-marked genus $g$ \emph{tropical curve} is then a pair
$\Gamma = (\Gmw,\ell)$ with $\Gmw \in \J_{g,n}$ and
$\ell \in \R_{>0}^{E(\Gmw)}$. We shall say that $(\Gmw,\ell)$ is
\emph{isometric} to $(\Gmw',\ell')$ if there exists an isomorphism
$\phi \colon  \Gmw \to \Gmw'$ in $\J_{g,n}$ such that
$\ell' = \ell \circ \phi^{-1} \colon  E(\Gmw') \to \R_{>0}$.  The
\emph{volume} of $(\Gmw,\ell)$ is
$\sum_{e \in E(\Gmw)} \ell(e) \in \R_{> 0}$.

We can now describe the underlying set of the topological space
$\Delta_{g,n}$, which is the main object of study in this paper. It is
the set of isometry classes of $n$-marked genus $g$ tropical curves of
volume 1.  We proceed to describe its topology and further structure
as a closed subspace of the moduli space of tropical curves.

\begin{definition}\label{definition:mgn}
  Fix $g,n\ge 0$ with $2g-2+n > 0$.  For each object
  $\Gmw \in \J_{g,n}$ define the topological space
  \begin{equation*}
    \sigma(\Gmw)= \R_{\geq 0}^{E(\Gmw)}=\{\ell \colon  E(\Gmw) \to \R_{\geq
      0}\}.
  \end{equation*}
For a morphism $f\colon \Gmw \to \Gmw'$ define the continuous map
  $\sigma f \colon \sigma(\Gmw') \to \sigma(\Gmw)$ by
  $$(\sigma f)(\ell') = \ell\colon E(\Gmw) \to \R_{\geq 0},$$ where
  $\ell$ is given by
  
  \begin{equation*}
    \ell(e) =
    \begin{cases} 
      \ell'(e') & \text{ if $f$ sends $e$ to $e'\in E(\Gmw')$},\\
      0 & \text{ if $f$ collapses $e$ to a vertex}.
    \end{cases}
  \end{equation*}
  This defines a functor
  $\sigma \colon  \J_{g,n}^\mathrm{op} \to \mathrm{Spaces}$ and the
  topological space $\Mgn$ is defined to be the colimit of this
  functor.
\end{definition}

In other words, the topological space $\Mgn$ is obtained as
follows. For each morphism $f\colon \Gmw\to\Gmw'$, consider the map
$L_f\colon \sigma(\Gmw') \to \sigma(\Gmw)$ that sends
$\ell'\colon E(\Gmw')\to \R_{>0}$ to the length function
$\ell\colon E(\Gmw)\to \R_{>0}$ obtained from $\ell'$ by extending it
to be 0 on all edges of $\Gmw$ that are collapsed by $f$.  So $L_f$
linearly identifies $\sigma(\Gmw')$ with some face of $\sigma(\Gmw)$,
possibly $\sigma(\Gmw)$ itself.  Then
\begin{equation*}
  \Mgn = \left(\coprod \sigma(\Gmw)\right) \Big / \{\ell' \sim L_f(\ell')\},
\end{equation*}
where the equivalence runs over all morphisms $f\colon \Gmw\to\Gmw'$
and all $\ell'\in \sigma(\Gmw')$.

As we shall explain in more detail in later sections, $\Mgn$ naturally
comes with more structure than just a topological space: $\Mgn$ is a {\em generalized cone complex}, as defined in \cite[\S2]{acp}, and associated to a \emph{symmetric $\Delta$-complex} in the sense of \cite{CGP1-JAMS}.  This formalizes the observation that $\Mgn$ is glued
out of the cones $\sigma(\Gmw)$.

The \emph{volume} defines a function
$v \colon  \sigma(\Gmw) \to \R_{\geq 0}$, given explicitly as
$v(\ell) = \sum_{e \in E(\Gmw)} \ell(e)$, and for any morphism
$\Gmw \to \Gmw'$ in $\J_{g,n}$ the induced map
$\sigma(\Gmw) \to \sigma(\Gmw')$ preserves volume.  Hence there is an
induced map $v \colon  \Mgn \to \R_{\geq 0}$, and there is a unique element
in $\Mgn$ with volume 0 which we shall denote $\bullet_{g,n}$.  The
underlying graph of $\bullet_{g,n}$ consists of a single vertex with
weight $g$ that carries all $n$ marked points.

\begin{definition}\label{def:dgn}
  We let $\Dgn$ be the subspace of $\Mgn$ parametrizing curves of
  volume 1, i.e.,\ the inverse image of $1 \in \R$ under
  $v \colon  \Mgn \to \R_{\ge 0}$.
\end{definition}

\noindent Thus $\Dgn$ is homeomorphic to the link of $\Mgn$ at the
cone point $\bullet_{g,n}$.

\subsection{The marked graph complex.}  \label{ssec:Ggn}

Fix integers $g,n\ge 0$ with $2g-2+n>0$.  The {\em marked graph complex} $G^{(g,n)}$ is a chain complex of rational vector spaces.  As a graded vector space, it has generators $[\Gamma,\omega, m]$ for each connected graph $\Gamma$ of genus $g$ (Euler characteristic $1-g$) with or without loops, equipped with a total order $\omega$ on its set of edges and a marking $m \colon \{ 1, \ldots, n \} \rightarrow V(G)$, such that at each vertex $v$, the number of half-edges incident to $v$ plus $|m^{-1}(v)|$ is at least 3.  These generators are subject to the relations $$[\Gamma,\omega, m] = \mathrm{sgn}(\sigma) [\Gamma',\omega', m']$$ if there exists an isomorphism of graphs $\Gamma \cong \Gamma'$ that identifies $m$ with $m'$, and under which the edge orderings $\omega$ and $\omega'$ are related by the permutation $\sigma$.  In particular this forces $[\Gamma,\omega, m] = 0$ when $\Gamma$ admits an automorphism that fixes the markings and induces an odd permutation on the edges.  A genus $g$ graph $\Gamma$ with $v$ vertices and $e$ edges is declared to be in homological degree $v-(g+1) = e-2g$.  When $n = 0$, this convention agrees with \cite{Willwacher15}; it is shifted by $g+1$ compared to \cite{Kontsevich93}.

For example, $G^{(1,1)}$ is $1$-dimensional, supported in degree $0$, with a generator corresponding to a single loop based at a vertex supporting the marking.  On the other hand, $G^{(2,0)} = 0$, since all generators of $G^{(2,0)}$ are subject to the relation $[\Gamma,\omega,m] = -[\Gamma,\omega,m]$.

The differential on $\Ggn$ is defined as follows: given $[\Gamma,\omega,m]\ne 0$,
\begin{equation}\label{eq:the-differential}
\partial[\Gamma,\omega,m]= \sum_{i=0}^N (-1)^i [\Gamma/e_i, \omega|_{E(\Gamma)\smallsetminus\{e_i\}}, \pi_i\circ m],
\end{equation} where
$\omega = (e_0<e_1<\cdots<e_N)$ is the total ordering on the edge set $E(\Gamma)$ of $\Gamma$, the graph $\Gamma/e_i$ is the result of collapsing $e_i$ to a point, $\pi_i\colon V(\Gamma)\to V(\Gamma/e_i)$ is the resulting surjection of vertex sets, and $\omega|_{E(\Gamma/e_i)}$ is the induced ordering on the subset $E(\Gamma/e_i) = E(\Gamma)\smallsetminus \{e_i\}$.  
If $e_i$ is a loop, we interpret the corresponding term  in~\eqref{eq:the-differential} as zero.

\begin{remark}
We may equivalently define $G^{(g,n)}$ as follows: take the graded vector space
$$\bigoplus_{(\Gamma,m)} {\mathsf \Lambda}^{|E(\Gamma)|} \Q^{E(\Gamma)},$$
where $ {\mathsf \Lambda}^{|E(\Gamma)|} \Q^{E(\Gamma)}$ appears in homological degree $|E(\Gamma)|-2g$, and impose the following relations.  For any isomorphism $\phi\col (\Gamma,m) \to (\Gamma',m')$, let $$\phi_*\col {\mathsf \Lambda}^{|E(\Gamma)|} \Q^{E(\Gamma)} \to {\mathsf \Lambda}^{|E(\Gamma')|} \Q^{E(\Gamma')}$$
denote the induced isomorphism of $1$-dimensional vector spaces. Then we set $w = \phi_*w$ for each $w\in {\mathsf \Lambda}^{|E(\Gamma)|} \Q^{E(\Gamma)}$.  Viewing nonzero elements of ${\mathsf \Lambda}^{|E(\Gamma)|} \Q^{E(\Gamma)}$ as orientations on $\R^{|E(\Gamma)|}$, this description accords with the rough definition of $G^{(g,n)}$ in terms of graphs and orientations given in the introduction.
\end{remark}

\section{Symmetric $\Delta$-complexes and relative cellular homology}
\label{sec:cellular-chains}

We briefly recall the notion of symmetric $\Delta$-complexes and explain how $\Dgn$ is naturally interpreted as an object in this category.  We then recall the cellular homology of symmetric $\Delta$-complexes developed in \cite[\S3]{CGP1-JAMS}, and extend this to a cellular theory of relative homology for pairs.  This relative cellular homology of pairs will be applied in \S\ref{sec:proofs} to prove that there is a surjection $C_*(\Delta_{g,n};\Q) \rightarrow G^{(g,n)}$ that induces isomorphisms in homology, as stated in Theorem~\ref{thm:split}.

\subsection{The symmetric $\Delta$-complex structure on $\Dgn$}  Let $I$ denote the category whose objects are the sets $[p] = \{0, \ldots, p \}$ for nonnegative integers $p$, together with $[-1] := \emptyset$, and whose morphisms are injections of sets.  
\begin{definition}
A {\em symmetric $\Delta$-complex} is a presheaf on $I$, i.e., a functor from $I^{\mathrm{op}}$ to $\mathsf{Sets}$.  
\end{definition}
As in \cite[\S3]{CGP1-JAMS} we write $X_p = X([p])$, except when that would create double subscripts.
The {\em geometric realization} of $X$ is the topological space
\begin{equation}\label{eq:realization}
|X| = \bigg( \coprod_{p = 0}^\infty X_p \times \Delta^p \bigg) / \sim,
\end{equation}
where $\Delta^p$ is the standard $p$-simplex and $\sim$ is the equivalence relation that is generated as follows.  For each $x \in X_p$ and each injection $\theta  \colon  [q] \hookrightarrow [p]$, the simplex $\{\theta^*(x)\} \times \Delta^q$ is identified with a face of $\{x\} \times \Delta^{p}$ via the linear map that takes the vertex $(\theta^*(x),e_i)$ to $(x,e_{\theta(i)} )$.  The cone $CX$ is defined similarly, but with $\R^{p+1}_{\geq 0}$ in place of $\Delta^p$.
Note that our symmetric $\Delta$-complexes include the data of an {\em augmentation}, that is, the locally constant map $|X| \rightarrow X_{-1}$ induced by the unique inclusion $[-1] \hookrightarrow [p]$.

\begin{example}\label{ex:Dgn}
  Most importantly for our purposes, 
  $\Dgn$ is naturally identified with the geometric realization of a symmetric
  $\Delta$-complex (and $\Mgn$ is the associated cone), as we now explain.  

Consider the following functor $X=X_{g,n}\colon I^\mathrm{op} \to \mathsf{Sets}$. The elements of $X_p$ are equivalence classes of pairs $(\Gmw,\tau)$ where 
$\Gmw\in \J_{g,n}$ and $\tau\colon E(\Gmw)\to [p]$ is a bijective edge-labeling.  
  Two edge-labelings are considered equivalent if they are in the same
  orbit under the evident action of $\Aut(\Gmw)$.  Here $\Gmw$ ranges
  over all objects in $\J_{g,n}$ with exactly $p+1$ edges.  (Recall
  from \S\ref{sec:jgn} that we have tacitly picked one element
  in each isomorphism class in $\J_{g,n}$.)

  Next, for each injective map $\iota\colon [p']\to[p]$, define the
  following map $X(\iota) \colon X_p \to X_{p'}$; given an element
  of $X_p$ represented by $(\Gmw,\tau\colon E(\Gmw)\to [p])$,
  contract the edges of $\Gmw$ whose labels are not in
  $\iota([p'])\subset [p]$, then relabel the remaining edges with
  labels $[p']$ as prescribed by the map $\iota$.  The result is a
  $[p']$-edge-labeling of some new object $\Gmw'$, and we set
  $X(\iota)(\Gmw)$ to be the corresponding element of $X_{p'}$.

The geometric realization of $X$ is naturally identified with $\Dgn$, as follows.  Recall that a point in the relative interior of $\Delta^p$ is expressed as $\sum_{i=0}^p a_i e_i$, where $a_i >0$ and $\sum a_i = 1$.  Then an element $x \in X_p$ corresponds to a graph in $\J_{g,n}$ together with a labeling of its $p+1$ edges, and a point $(x,\sum a_i e_i)$ corresponds to the isomorphism class of stable tropical curve with underlying graph $x$ in which the edge labeled $i$ has length $a_i$.  By abuse of notation, we will use $\Dgn$ to refer to this symmetric $\Delta$-complex, as well as its geometric realization.
\end{example}

\begin{remark}
  We note that the cellular complexes of $\Delta_{g,n}$ have natural
  interpretations in the language of modular operads, developed by
  Getzler and Kapranov to capture the intricate combinatorial
  structure underlying relations between the $S_n$-equivariant
  cohomology of $\cM_{g,n}$, for all $g$ and $n$, and that of
  $\ocM_{g',n'}$, for all $g'$ and $n'$ \cite{GetzlerKapranov98}.  In
  particular, the cellular cochain complexes $C^*(\Delta_{g,n})$, with
  their $S_n$-actions, 
  agree with the Feynman transform of the modular operad $\mathrm{ModCom}$, assigning the vector space $\Q$, with trivial $S_n$-action, to each $(g,n)$  with  $2g-2+n >0$, and assigns 0 otherwise.  There is a quotient map $\mathrm{ModCom} \to \mathrm{Com}$, which is an isomorphism for $g = 0$ and where the commutative operad $\mathrm{Com}$ is zero for $g > 0$.  The Feynman transform of this quotient map is, up to a regrading, the map appearing in Theorem~\ref{thm:split}.  Since the Feynman transform is homotopy invariant and has an inverse up to quasi-isomorphism, it cannot turn the non-quasi-isomorphism $\mathrm{ModCom} \to \mathrm{Com}$ into a quasi-isomorphism, and therefore the map in Theorem~\ref{thm:split} cannot be a quasi-isomorphism for all $(g,n)$.  In fact it is not in the exceptional cases $g = 0$ and $(g,n) = (1,1)$, where $\weight = \emptyset$.

  See also \cite{AWZ20}, especially \S6.2 and \S3.2.1.
  \end{remark}

\subsection{Relative homology}

We now present a relative cellular homology for pairs of symmetric $\Delta$-complexes.  This is a natural extension of the cellular homology theory for symmetric $\Delta$-complexes developed in \cite[\S3]{CGP1-JAMS}, which we briefly recall.  

Let $X  \colon  I^\mathrm{op} \rightarrow \mathrm{Sets}$ be a symmetric $\Delta$-complex.
The group of cellular $p$-chains $C_p(X)$ is defined to be the co-invariants
\[ 
C_p(X) = (\Q^{\mathrm{sign}} \otimes_\Q \Q {X_p})_{S_{p+1}},
\]
where $\Q X_p$ denotes the $\Q$-vector space with basis $X_p$ on which $S_{p+1} = I^{\mathrm{op}}([p],[p])$ acts by permuting the basis vectors.  This comes with a natural differential $\partial$ induced by
$\sum(-1)^i (d_i)_* \colon  \Q X_p \rightarrow \Q X_{p-1}$, and the homology of $ C_* (X)$ is identified with the rational homology $\widetilde H_*(|X|, \Q)$, reduced with respect to the augmentation $|X| \rightarrow X_{-1}$, cf. \cite[Proposition~3.8]{CGP1-JAMS}.

\label{sec:relative-homology}

This rational cellular chain complex has the following natural analogue for \emph{pairs} of
symmetric $\Delta$-complexes: there is a relative cellular chain
complex, computing the relative (rational) homology of the geometric
realizations.  Let us start by discussing what ``subcomplex'' should
mean in this context.

\begin{lemma}
  Let $X, Y \colon  I^\mathrm{op} \to \mathrm{Sets}$ be symmetric
  $\Delta$-complexes, and let $f \colon  X \to Y$ be a map (i.e.,\ a natural
  transformation of functors).  If $f_p \colon  X_p \to Y_p$ is
  injective for all $p \geq 0$, then $|f| \colon  |X| \to |Y|$ is also
  injective.

  If $f_p$ is injective for all $p \geq -1$, then $Cf: CX \to CY$ is also injective, where $CX$ and $CY$ are the cones over $X$ and $Y$.\end{lemma}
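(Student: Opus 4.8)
The statement has two parts, both asserting that injectivity of a map of symmetric $\Delta$-complexes at the level of the presheaves forces injectivity on geometric realizations (resp.\ on cones). The plan is to analyze the equivalence relation $\sim$ in~\eqref{eq:realization} carefully, reducing the question to a statement about orbits and stabilizers. Recall that a point of $|X|$ has a representative $(x, \sum_{i} a_i e_i)$ where, after discarding vertices of the simplex with barycentric coordinate $0$, we may assume $a_i > 0$ for all $i$; call this a \emph{reduced representative}, with $x$ lying over a unique $[p]$. The first step is the standard observation that every point of $|X|$ has a reduced representative, and that two reduced representatives $(x, \sum a_i e_i)$ and $(x', \sum a_j' e_j')$ (with $x \in X_p$, $x' \in X_{p'}$) define the same point of $|X|$ if and only if $p = p'$ and there is a permutation $\sigma \in S_{p+1}$ with $\sigma^* x = x'$ and $a_{\sigma(i)} = a_i'$ for all $i$. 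This is proved by tracing through the generators of $\sim$: every generator is given by a face map $\theta^* \colon [q] \hookrightarrow [p]$, and a composite of such identifications that returns to the same simplex dimension must be induced by a bijection $[p] \to [p]$, i.e.\ by an element of $S_{p+1}$; the barycentric coordinates are permuted accordingly, and the $a_i > 0$ condition forces the face to be all of $\Delta^p$ at each stage.

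With this normal form in hand, the first part follows quickly. Suppose $|f|$ identifies two points of $|X|$; pick reduced representatives $(x, \sum a_i e_i)$ and $(x', \sum a_j' e_j')$ for them. Then $|f|$ sends these to $(f_p(x), \sum a_i e_i)$ and $(f_{p'}(x'), \sum a_j' e_j')$, which are again reduced representatives in $|Y|$ (the barycentric coordinates are unchanged, so still all positive). By the normal-form criterion applied in $|Y|$, we get $p = p'$ and a permutation $\sigma \in S_{p+1}$ with $\sigma^* f_p(x) = f_{p'}(x')$ and $a_{\sigma(i)} = a_i'$. Since $f$ is a natural transformation, $\sigma^* f_p(x) = f_p(\sigma^* x)$, so $f_p(\sigma^* x) = f_p(x')$; injectivity of $f_p$ gives $\sigma^* x = x'$, and together with $a_{\sigma(i)} = a_i'$ this says the two original reduced representatives define the same point of $|X|$. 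Hence $|f|$ is injective.

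For the second part, the cone $CX$ is built the same way but with $\R^{p+1}_{\geq 0}$ replacing $\Delta^p$, so a point of $CX$ has a reduced representative $(x, \sum a_i e_i)$ with $x \in X_p$ and all $a_i > 0$, \emph{together with} the cone point, which is the image of the unique point of $X_{-1} \times \R^0_{\geq 0}$ — this is exactly where hypothesis on $p = -1$ enters: the cone point is the realization of the $(-1)$-cell, and we need $f_{-1}$ injective to control it (when $X_{-1}$ has more than one element, $CX$ has correspondingly many cone points). Away from the cone points the argument is identical to the first part, with $S_{p+1}$ acting on $\R^{p+1}_{>0}$-coordinates; and a non-cone point of $CX$ cannot map to a cone point of $CY$ since $f$ respects the augmentation and a reduced representative with $p \geq 0$ maps to one with $p \geq 0$. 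For the cone points themselves, $Cf$ restricted to $X_{-1}$ is just $f_{-1}$, which is injective by hypothesis, and distinct cone points stay distinct. Assembling these cases gives injectivity of $Cf$.

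\textbf{Main obstacle.} The only real content is the normal-form lemma for $\sim$ — verifying that a chain of face-map identifications relating two reduced representatives of the same dimension is necessarily induced by a single permutation in $S_{p+1}$, rather than something more complicated. This is a careful but routine bookkeeping argument with the combinatorics of injections in $I$; once it is in place, both parts of the statement are immediate from naturality of $f$ and injectivity of the $f_p$. One should also be slightly careful to phrase the realization so that the $S_{p+1}$-action on $X_p$ (through which $\sim$ factors when $p' = p$) is correctly matched with the permutation of barycentric coordinates, but this is exactly the identification built into~\eqref{eq:realization}.
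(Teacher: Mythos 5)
Your proof is correct and takes essentially the same approach as the paper's proof sketch. Your ``normal-form lemma'' — that every point of $|X|$ has a unique reduced (all-coordinates-positive) representative up to the $S_{p+1}$-action — is precisely the paper's observation that $|X|$ decomposes as the disjoint union of $(\Delta^p \smallsetminus \partial\Delta^p)/H^X(x)$ over orbit representatives $x$; the paper then concludes by noting that injectivity of $f_p$ forces both $H^X(x) = H^Y(f(x))$ and injectivity on $S_{p+1}$-orbit sets, whereas you unfold the same content via naturality and the permutation $\sigma$ directly. Both argue identically for the cone case using the extra $f_{-1}$ hypothesis to control cone points.
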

\begin{proof}[Proof sketch]
  Let us temporarily write $H^X(x) < S_{p+1}$ for the stabilizer of a
  simplex $x \in X_p$, and similarly $H^Y(f(x))$ for the stabilizer
  of $f(x) \in Y_p$.  The maps $f_p \colon  X_p \to Y_p$ are
  equivariant for the $S_{p+1}$ action, and injectivity of $f_p$
  implies that $H^X(x) = H^Y(f(x))$ and the induced map of
  orbit sets $X_p/S_{p+1} \to Y_p/S_{p+1}$ is
  injective.

  As a set, $|X|$ is the disjoint union of
  $(\Delta^p\smallsetminus \partial \Delta^p)/H^X(x)$ over all $p$ and one
  $x \in X_p$ in each $S_{p+1}$-orbit, and similarly for $|Y|$.  At the level
  of sets, the induced map $|f| \colon  |X| \to |Y|$ then restricts to a
  bijection from each subset
  $(\Delta^p\smallsetminus \partial \Delta^p)/H^X(x) \subset |X|$ onto the
  corresponding subset of $|Y|$, and these subsets of $|Y|$ are
  disjoint.

  The statement about $CX \to CY$ is proved in a similar way.
\end{proof}
Conversely, it is not hard to see that the geometric map $|f| \colon  |X| \to |Y|$ is injective
\emph{only} when $f_p \colon  X_p \to Y_p$ is injective for all $p \geq 0$.
In this situation, in fact $|f| \colon  |X| \to |Y|$ is always a
homeomorphism onto its image.  This is obvious in the case of finite
complexes where both spaces are compact, which is the only case needed
in this paper, and in general is proved in the same way as for CW
complexes (the CW topology on a subcomplex agrees with the subspace
topology).

\begin{definition}
  A \emph{subcomplex} $X \subset Y$ of a symmetric $\Delta$-complex
  is a subfunctor $X$ of $Y\col I^{\mathrm{op}} \to \mathrm{Sets}$, in which for each $p$, $X_p$ is a subset of  $Y_p$, with the subfunctor being given by the canonical inclusions $X_p \to Y_p$.
  The inclusion $\iota \colon  X \to
  Y$ then induces an injection $|\iota|  \colon  |X| \to |Y|$, which we shall
  use to identify $|X|$ with its image $|X| \subset |Y|$.
\end{definition}
\noindent In particular, we emphasize that for each injection $\iota\col [p']\to [p]$ the map $X(\iota)\col X_p\to X_{p'}$ is a restriction of $Y(\iota)\col Y_p\to Y_{p'}$.  

If $X\subset Y$ is a subcomplex of a symmetric $\Delta$-complex, we obtain a map $\iota_* \colon   C_*(X) \to
 C_*(Y)$ of cellular chain complexes which is injective in each degree,
and we define a relative cellular chain complex $ C_*(Y,X)$ by the
short exact sequences
\begin{equation}\label{eq:1}
  0 \to  C_p(X) \xrightarrow{\iota_p}  C_p(Y) \to
   C_p(Y,X) \to 0
\end{equation}
for all $p \geq -1$.  Similarly for cochains and with coefficients in
a $\Q$-vector space $A$.

\begin{proposition}
  Let $Y$ be a symmetric $\Delta$-complex and $X \subset Y$ a subcomplex.  Let $\iota: X \to Y$ be the inclusion, and let $C\iota: CX \to CY$ be the induced maps of cones over $X$ and $|\iota|: |X| \to |Y|$ be the restriction of $C\iota$.  Then there is a natural isomorphism
  \begin{equation*}
    H_{p+1}((CY/CX),(|Y|/|X|);\Q) \cong H_p( C_*(Y,X)).
  \end{equation*}
  In particular, if $f_{-1}: X_{-1} \to Y_{-1}$ is a bijection, we get a natural isomorphism
  \begin{equation*}
    H_p(C_*(Y,X)) \cong H_p(|Y|,|X|;\Q).
  \end{equation*}
  Similarly for cohomology.  A similar result also holds with coefficients in an arbitrary abelian group $A$, provided that $S_{p+1}$ acts freely on $X_p$ and $Y_p$ for
all $p$.
\end{proposition}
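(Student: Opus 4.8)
The plan is to deduce this relative statement from the absolute case \cite[Prop.~3.8]{CGP1-JAMS} by a five-lemma comparison of long exact sequences. Recall first that \cite[Prop.~3.8]{CGP1-JAMS}, together with its proof, supplies for every symmetric $\Delta$-complex $Z$ a natural isomorphism
\[
H_p(C_*(Z)) \;\cong\; H_{p+1}(CZ, |Z|; \Q),
\]
the right-hand side being $\widetilde H_p(|Z|;\Q)$ since $CZ$ is a disjoint union of cones; moreover the isomorphism is realized by a natural zigzag of quasi-isomorphisms relating $C_*(Z)$ to the rational singular chain complex of the pair $(CZ,|Z|)$, shifted in degree by one.

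The first ingredient is algebraic: the short exact sequences \eqref{eq:1} assemble into a short exact sequence of chain complexes $0 \to C_*(X) \to C_*(Y) \to C_*(Y,X) \to 0$, hence a long exact sequence connecting $H_*(C_*(X))$, $H_*(C_*(Y))$, and $H_*(C_*(Y,X))$. The second ingredient is topological: inside $CY$ consider the triple $|Y| \subset |Y| \cup CX \subset CY$. Excision (using $CX \cap |Y| = |X|$) identifies $H_*(|Y|\cup CX, |Y|)$ with $H_*(CX,|X|)$, and the quotient theorem for the CW pair $(CY, |Y|\cup CX)$ identifies $H_*(CY, |Y|\cup CX)$ with $H_*(CY/CX,|Y|/|X|)$, because collapsing $CX$ carries $|Y|$ onto $|Y|/|X|$. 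The long exact sequence of this triple then reads
\[
\cdots \to H_{p+1}(CX,|X|) \to H_{p+1}(CY,|Y|) \to H_{p+1}(CY/CX,|Y|/|X|) \to H_p(CX,|X|) \to \cdots.
\]

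To combine the two, apply the recollection above to $X$ and to $Y$, compatibly with the inclusion $X \hookrightarrow Y$; this produces a morphism from the short exact sequence $0 \to C_*(X) \to C_*(Y) \to C_*(Y,X) \to 0$ to the analogous (degree-shifted) short exact sequence of rational singular chain complexes of $(CX,|X|)$, $(CY,|Y|)$, $(CY,|Y|\cup CX)$, which is a quasi-isomorphism on the first two terms and therefore on the third. In view of the identifications of the previous paragraph this yields the desired natural isomorphism $H_p(C_*(Y,X)) \cong H_{p+1}(CY/CX,|Y|/|X|)$. I expect the one genuine obstacle to be precisely this step: one needs the cellular-to-singular comparison of \cite[Prop.~3.8]{CGP1-JAMS} to be natural enough to respect passage to a subcomplex. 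If one insists on working only with homology, the equivalent point to check is that the connecting homomorphism induced by $\partial$ on $C_*(Y,X)$ corresponds, under the absolute isomorphisms, to the topological connecting homomorphism of the triple; once that compatibility is in hand the five lemma completes the argument.

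For the last assertion, assume $f_{-1}\colon X_{-1}\to Y_{-1}$ is a bijection; for nonempty subcomplexes of $\Dgn$ this just says $X_{-1}=Y_{-1}=\{\bullet_{g,n}\}$, so that $CX$ and $CY$ are cones, hence contractible, and in general the inclusion $CX\hookrightarrow CY$ is an equivalence since both deformation retract compatibly onto the augmentation identified by $f_{-1}$. Then $\widetilde H_*(CY/CX;\Q)\cong H_*(CY,CX;\Q)=0$, so the long exact sequence of the pair $(CY/CX,|Y|/|X|)$ collapses to isomorphisms $H_{p+1}(CY/CX,|Y|/|X|)\cong\widetilde H_p(|Y|/|X|;\Q)$, which the quotient theorem for the CW pair $(|Y|,|X|)$ identifies with $H_p(|Y|,|X|;\Q)$, giving $H_p(C_*(Y,X))\cong H_p(|Y|,|X|;\Q)$. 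The cohomological statements follow by applying $\Hom_\Q(-,\Q)$ throughout, all complexes in sight being of finite type over $\Q$. Finally, under the freeness hypothesis the realizations $|X|$ and $|Y|$ are genuine CW complexes with $p$-cells indexed by $X_p/S_{p+1}$ and $Y_p/S_{p+1}$, and $C_p(-;A)$ (defined with $\Q$ replaced by $A$) is honestly their cellular chain group; the absolute identification $H_*(C_*(Z;A))\cong H_{*+1}(CZ,|Z|;A)$ then holds over any abelian group $A$ with no recourse to transfer, and the argument above goes through verbatim with $\Q$ replaced by $A$.
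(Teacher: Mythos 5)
Your proposal is correct and follows essentially the same strategy as the paper's own proof sketch: map the long exact sequence coming from the short exact sequence of cellular chain complexes to the long exact sequence of singular (reduced) homology of the geometric realizations, invoke \cite[Proposition~3.8]{CGP1-JAMS} for the absolute terms, and conclude by the five lemma. Your account is somewhat more explicit on the topological side (the triple $|Y|\subset |Y|\cup CX\subset CY$, excision, and the passage to the quotient pair), which is the right way to phrase the general case, but the underlying argument is the same.
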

\begin{proof}[Proof sketch]
  We prove the statement about homology.  The short exact sequence~(\ref{eq:1}) induces a long exact sequence in homology, which maps to the
long exact sequence in singular homology of the geometric
realizations.  For each $p$, the first, second, fourth, and fifth vertical arrows

$$\xymatrix{H_{p}(C_*(X))\ar[r]\ar[d]^{\cong} & H_{p}(C_*(Y)) \ar[r]\ar[d]^{\cong} &H_{p}(C_*(Y,X))\ar[r]\ar[d] &H_{p-1}(C_*(X)) \ar[r]\ar[d]^{\cong} &H_{p-1}(C_*(Y)) \ar[d]^{\cong}\\
H_{p}(|X|;\Q) \ar[r] & H_{p}(|Y|;\Q) \ar[r] & H_{p}(|Y|,|X|;\Q) \ar[r] & H_{p-1}(|X|;\Q) \ar[r]  & H_{p-1}(|Y|;\Q)}$$
are shown to be isomorphisms in \cite[Proposition~3.8]{CGP1-JAMS}, so the middle vertical arrow is also an isomorphism. 
\end{proof}

\section{A contractibility criterion} \label{sec:contractibility}

In this section, we develop a general framework for contracting subcomplexes of a symmetric $\Delta$-complex, loosely in the spirit of discrete Morse theory.  We then apply this technique to prove Theorem~\ref{thm:contractible}, showing that three natural subcomplexes of $\Dgn$ are contractible.
See Figure~\ref{fig:master} for a running illustration of the various definitions that follow.

\subsection{A contractibility criterion}\label{sec:machine}

Let $X$ be a symmetric $\Delta$-complex, i.e.,~a functor $I^{op}\rightarrow\mathrm{Sets}$.  For each injective map $\theta\colon[q]\rightarrow [p]$ we write $$\theta^*\colon X_p\rightarrow X_q,$$ for the induced map on simplices, where $X_i$ denotes the set of $i$-simplices $X([i])$.  

For $\sigma\in X_p$ and $\theta\colon [q]\hookrightarrow[p]$, we say that $\tau = \theta^*\sigma \in X_q$ is a {\em face} of $\sigma$, with face map $\theta$, and we write $\tau \precsim \sigma$. Thus $\precsim$ is a reflexive and transitive relation. It descends to a partial order $\preceq$ on the set $\coprod X_p/S_{p+1}$ of symmetric orbits of simplices.  We write $[\sigma]$ for the $S_{p+1}$-orbit of a $p$-simplex $\sigma$.  

\begin{definition}
A {\em property} on $X$ is a subset of the vertices $P\subset X_0$.
\end{definition}
\noindent One could call this a ``vertex property,'' but we avoid that terminology since our motivating example is $X = \Dgn$, when the vertices of $X$ are $1$-edge graphs. In this situation, we are interested in properties of {\em edges} of graphs in $\J_{g,n}$ that are preserved by automorphisms and uncontractions, such as the property of being a bridge.  See Example~\ref{ex:properties-1}.

\medskip
\begin{figure}
\begin{tikzpicture}
\tikzstyle{vert}=[circle,thick,draw=gray!60,fill=gray!60,inner sep=1pt]
\tikzstyle{blue}=[circle,thick,draw=blue!60,fill=blue!60,inner sep=1pt]
\begin{scope}[scale=3]
\node (a)[vert] at (-.5,.86) {};
\node (b)[vert] at (0.5, .86) {};
\node (c)[vert] at (-1, 0) {};
\node (d)[vert] at (0, 0) {};
\node (e)[vert] at (1, 0) {};
\draw [gray!50, ultra thick, fill=gray!20] (a.center) -- (c.center) -- (d.center) -- (a.center);
\draw [gray!50, ultra thick, fill=gray!20] (a.center) -- (b.center) -- (d.center) -- (a.center);
\draw [gray!50, ultra thick, fill=gray!20] (e.center) -- (b.center) -- (d.center) -- (e.center);
\node [below] at (0, 0) {$\scriptstyle{P}$};
\node [below] at (1, 0) {$\scriptstyle{P}$};
\node [gray] at (0,.95) {$\scriptstyle{P^*\smallsetminus P}$};
\node [gray] at (-.9,.5) {$\scriptstyle{P^*\smallsetminus P}$};
\node [gray] at (-1.2,0) {$\scriptstyle{P^*\smallsetminus P}$};
\node [gray] at (-.5,.95) {$\scriptstyle{P^*\smallsetminus P}$};
\node [gray] at (.5,.95) {$\scriptstyle{P^*\smallsetminus P}$};
\draw [blue!50, ultra thick, fill=blue!50] (d.center)--(e.center);
\node [blue] at (0, 0) {};
\node [blue] at (1, 0) {};
\node [black] at (.5, .76) {$\scriptstyle{v}$};
\node [black] at (.5, .36) {$\scriptstyle{T}$};
\end{scope}
\end{tikzpicture}
\caption{A symmetric $\Delta$-complex $X$ with two $P$-vertices.  Here $X_P = X$, and $v$ is a co-$P$-face of $T$.}
\label{fig:master}
\end{figure}
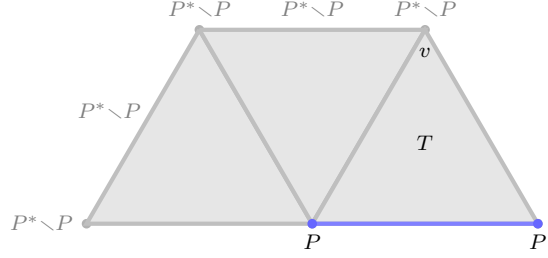

Let $P\subset X_0$ be a property, and let $\sigma\in X_p$.  For $i=0,\ldots, p$, we understand the $i^{\text{th}}$ vertex of $\sigma$ to be $v_i = \iota^*(\sigma)$, where $\iota\colon [0]\rightarrow[p]$ sends $0$ to $i$.  We write
 $$P(\sigma) = \{i \in [p] : v_i \text{ is in }P\}$$
 for the set labeling vertices of $\sigma$ that are in $P$, and call these {\em $P$-vertices} of $\sigma$.  Similarly, we write
 $$P^c(\sigma) = \{i \in [p] : v_i \text{ is not in }P\}$$
 for the complementary set, and call these the {\em non-$P$-vertices} of $\sigma$.

We write $\Simp(X) = \coprod_{p\ge 0} X_p$ for the set of all simplices of $X$, and define $$P(X) = \{\sigma \in \Simp(X) : P(\sigma)\ne \emptyset\}.$$ We call the elements of $P(X)$ the {\em $P$-simplices} of $X$; they are the simplices with at least one $P$-vertex.   If $P^c(\sigma)=\emptyset$ then we say $\sigma$ is a {\em strictly $P$-simplex}.

Any collection of simplices of $X$ naturally generates a subcomplex whose simplices are all faces of simplices in the collection.  We write $X_P$ for the subcomplex of $X$ generated by $P(X)$.  Let $P^*(X)$ denote the set of simplices of $X_P$.  In other words,
$$P^*(X) = \Simp(X_P) = \{\tau\in \Simp(X) : \tau \precsim \sigma \text{ for some } \sigma\in P(X)\}.$$  
The set $\Simp(X) \smallsetminus P(X)$ is also the set of simplices in a subcomplex of $X$, as is $P^*(X) \smallsetminus P(X)$.

\begin{example}\label{ex:master}
Figure~\ref{fig:master} shows a symmetric $\Delta$-complex $X$ with five 0-simplices, $7$ symmetric orbits of $1$-simplices and $3$ symmetric orbits of $2$-simplices; we have chosen to illustrate an example where $S_{p+1}$ acts freely on the $p$-simplices for each $p$.  

There are two vertices in $P$.  
The subcomplex $X_P$ is then all of $X$, since the maximal simplices all have at least one $P$-vertex.  In this example, we therefore have $P^*(X) = \Simp(X)$.  The only simplices {\em not} in $P(X)$ are marked $P^*\smallsetminus P$ in the figure. The strictly $P$-simplices are drawn in blue.
\end{example}
 
 \begin{example}\label{ex:properties-1}
We pause to explain how these definitions apply to $X=\Delta_{g,n}$.    For any $g,n\ge0$ with $2g-2+n>0$,
recall that the $p$-simplices in $\Delta_{g,n}$ are pairs $(\Gmw,t)$ where $\Gmw\in \J_{g,n}$ and $t\colon E(\Gmw)\rightarrow [p]$ is a bijection.  There is a bijection from $\coprod_{p\ge -1} \Delta_{g,n}([p])/S_{p+1}$ to $\J_{g,n}$, sending $[(\Gmw,t)]$ to $\Gmw$. 

The vertices $X_0$ are simply one-edge graphs $\Gmw\in \J_{g,n}$, since any such graph has a unique edge labeling. There is one such graph which is a loop; the others are bridges.  For each $g'$ satisfying $0 \leq g' \leq g$ and each subset $A \subset [n]$ with $2g'-1+|A|>0$ and $2(g-g')-1+(n-|A|)>0$, there is a unique one-edge graph in $\J_{g,n}$ with vertices $v_1$ and $v_2$, such that $w(v_1) = g'$ and $m^{-1}(v_1) = A$.
We write $\bfB(g',A) \in \Delta_{g,n}([0])$ for the corresponding vertex.

Note that $\bfB(g',A)=\bfB(g - g',[n] \smallsetminus A)$.  We define the property
$$P_{g',n'} = \{\bfB(g',A) : |A| = n'\} \subset \Delta_{g,n}([0]).$$  A simplex $\sigma = (\Gmw,t)\in \Delta_{g,n}$ is a $P_{g',n'}$-simplex if and only if $\Gmw$ has a {\em $(g',n')$-bridge}, i.e.,~a bridge separating subgraphs of types $(g',n')$ and $(g\!-\!g',n\!-\!n')$ respectively.  Similarly, 
 $(\Gmw',t')\in P_{g',n'}^*(X)$ if $\Gmw'$ admits a morphism in $\J_{g,n}$ from some $\Gmw$ with a $(g',n')$-bridge.  
\end{example}

We return to the general case, where $X$ is a symmetric $\Delta$-complex and $P\subset X_0$ is a property, and define a co-$P$ face as a face such that all complementary vertices lie in $P$.  

\begin{definition}
Given $\sigma\in X_p$ and $\theta\colon [q]\hookrightarrow [p]$, we say that $\theta$ is a {\em co-$P$ face map} if $[p]\smallsetminus \operatorname{im} \theta \subset P(\sigma)$.  In this case, we say that $\tau = \theta^*(\sigma)$ is a {\em co-$P$ face} of $\sigma$.  
\end{definition}

\noindent We write $\tau \precsim_P \sigma$ if $\tau$ is a co-$P$ face of $\sigma$.  Then $\precsim_P$ is a reflexive, transitive relation, and it induces a partial order $\preceq_P$ on $\coprod_{p\ge 0} X_p/S_{p+1}$, where $[\tau]\preceq_P [\sigma]$ if $\tau\precsim_P\sigma$.

\begin{example}
In Figure~\ref{fig:master}, the $0$-simplex $v$ is a co-$P$-face of $T$.  

  In our main example $X=\Delta_{g,n}$, for any property $P\subset X_0$, let us say that an edge $e\in E(\Gmw)$ is a {\em $P$-edge} if the graph obtained from $\Gmw$ by collapsing each element of $E(\Gmw)-\{e\}$ is in $P$.
  A {\em $P$-contraction} is a contraction of $\Gmw$ by a subset, possibly empty, of $P$-edges.
Then a face $(\Gmw',t')$ of $(\Gmw,t)$ is a co-$P$ face if and only if $\Gmw'$ is isomorphic to a $P$-contraction of $\Gmw$.
\end{example}

The \emph{automorphisms} of a simplex $\sigma \in X_p$, denoted $\Aut(\sigma)$, are the bijections $\psi\colon [p]\rightarrow[p]$ such that $\psi^*\sigma = \sigma.$  The natural map $\{\sigma\} \times \Delta^p \rightarrow |X|$ factors through $\Delta^p/ \Aut(\sigma)$.

A face $\tau \precsim \sigma$ is {\em canonical} (meaning canonical up to automorphisms) if, for any two injections $\theta_1$ and $\theta_2$ from $[q]$ to $[p]$ such that $\theta_i^*(\sigma) = \tau$, there exists $\psi \in \Aut(\sigma)$ such that $\theta_1 = \psi \theta_2$.  Note that the property of $\tau \precsim \sigma$ being canonical depends only on the respective $S_{q+1}$ and $S_{p+1}$ orbits; we will say that $[\tau] \preceq [\sigma]$ is canonical if $\tau \precsim \sigma$ is so.

\begin{remark}
  In \cite[\S3.4]{CGP1-JAMS} we defined a category $J_X$ with object set $\coprod_{p \geq -1} X_p$.  Automorphisms of $\sigma$ in this category agree with automorphisms in the above sense, and the relation $\tau \precsim \sigma$ holds if and only if there exists a morphism $\sigma \to \tau$ in $J_X$.  In op.\ cit.\ we also defined a $\Delta$-complex $\mathrm{sd}(X)$ called the subdivision of $X$, and a canonical homeomorphism $|\mathrm{sd}(X)| \cong |X|$.  Geometrically,  $[\sigma]$ and $[\tau]$ are then 0-simplices of $\mathrm{sd}(X)$, they are related by $\preceq$ if and only if there exists a 1-simplex connecting them.  The relation is canonical if and only if there is precisely one 1-simplex in $\mathrm{sd}(X)$ between them.
\end{remark}

\begin{example}
For any subgroup $G < S_{p+1}$, the quotient $\Delta^p / G$ carries a natural structure of symmetric $\Delta$-complex in which every face is canonical.  For an example of a face inclusion that is not canonical, consider the $\Delta$-complex consisting of a loop formed by one vertex and one edge (viewed as a symmetric $\Delta$-complex with one $0$-simplex and two $1$-simplices, cf. \cite[\S3]{CGP1-JAMS}). The automorphism groups of all simplices are trivial, so neither of the vertex-edge inclusions is canonical.  For an example of noncanonical face inclusions in $\Dgn$, see Example~\ref{ex:nonexample-Dgn}.
\end{example}

The main technical result of this section, Proposition~\ref{prop:prop}, involves canonical co-$P$-maximal faces and co-$P$-saturation, defined as follows. See Example~\ref{ex:canon-sat} below.

\begin{definition} \label{def:the-canon}
Let $Z\subset \coprod X_p/S_{p+1}$, and let $P$ be any property. We say that $Z$ admits {\em canonical co-$P$ maximal faces} if, 
for every $[\tau] \in Z$, the poset of those $[\sigma] \in Z$ such that $[\tau] \preceq_P [\sigma]$ has a unique maximal element $[\hat{\sigma}]$ and moreover $[\tau] \preceq [\hat \sigma]$ is canonical.
\end{definition}

\begin{definition}\label{def:co-p-sat}
Let $Y\subset \Simp(X)$ be any subset and let $P$ be any property on $X$. We call $Y$ {\em co-$P$-saturated} if $\tau\in Y$ and $\tau \precsim_P \sigma$ implies $\sigma \in Y$.  
\end{definition}

\begin{example}\label{ex:canon-sat}
We illustrate Definitions~\ref{def:the-canon} and~\ref{def:co-p-sat} for the symmetric $\Delta$-complex $X$ drawn in Figure~\ref{fig:master}. Here, the full set of symmetric orbits $\coprod_{p\ge 0}  X_p/S_{p+1}$ admits canonical co-$P$ maximal faces.  On the other hand, the $1$-skeleton $\coprod_{p=0,1} X_p/S_{p+1}$ does not: indeed, the poset $\{[\sigma]: [v]\precsim_P [\sigma]\}$ has two maximal elements.
 Finally, the set of simplices in the subcomplex generated by the $2$-simplex $T$ is co-$P$-saturated, while the vertex $v$ taken by itself is not co-$P$-saturated.
\end{example}

Given $P\subset X_0$ and an integer $i\ge 0$, let $X_{P,i}$ denote the subcomplex of $X$ generated by the set $V_{P,i}$ of $P$-simplices with at most $i$ non-$P$ vertices.
When no confusion seems possible, we write $X_{P,i}$ for the image of the natural map
\begin{equation}\label{eq:dpi}
\coprod_{p\ge 0}  \left ( X_p\cap V_{P,i} \right) \times \Delta^p  \longrightarrow |X|.
\end{equation}
For example, for $X$ and the property $Q$ as shown in Figure~\ref{fig:retract}, the subcomplexes $X_{Q,1}$ and $X_{Q,0}$ are shown in blue in Figure~\ref{fig:retract} (left and right sides respectively). 

We use $X_P$ to denote $X_{P,\infty}$, i.e., $X_P$ is the subcomplex generated by all $P$-simplices.  
In the specific case where $X=\Delta_{g,n}$, we abbreviate the notation and write
$$\Delta_{P,i} = (\Delta_{g,n})_{P,i}, \mbox{ \ \ and \ \ } \Delta_P = (\Delta_{g,n})_P.$$
Note that $\Delta_{P,i}$ parametrizes the closure of the locus of tropical curves with at least one $P$-edge and at most $i$ non-$P$ edges.  For instance, if $P$ is the property $P_{1,0}$ defined in Example~\ref{ex:properties-1}, then the subspace $\Delta_P \subset \Delta_{g,n}$ is the locus of tropical curves with either a loop or a vertex of positive weight.

We now state the main technical result of this section, which is a tool for producing deformation retractions inside symmetric $\Delta$-complexes.

\begin{figure}
\begin{tikzpicture}
\tikzstyle{vert}=[circle,thick,draw=gray!60,fill=gray!60,inner sep=1pt]
\tikzstyle{blue}=[circle,thick,draw=blue!60,fill=blue!60,inner sep=1pt]
\begin{scope}[scale=3]
\node (a)[vert] at (-.5,.86) {};
\node (b)[vert] at (0.5, .86) {};
\node (c)[vert] at (-1, 0) {};
\node (d)[vert] at (0, 0) {};
\node (e)[vert] at (1, 0) {};
\draw [gray!50, ultra thick, fill=gray!20] (a.center) -- (c.center) -- (d.center) -- (a.center);
\draw [gray!50, ultra thick, fill=gray!20] (a.center) -- (b.center) -- (d.center) -- (a.center);
\draw [gray!50, ultra thick, fill=gray!20] (e.center) -- (b.center) -- (d.center) -- (e.center);
\draw [blue!50, ultra thick, fill=blue!20] (e.center) -- (b.center) -- (d.center) -- (e.center);
\draw [blue!50, ultra thick] (a.center)--(d.center);
\draw [blue!50, ultra thick] (c.center)--(d.center);
\draw [blue!50, ultra thick, fill=blue!50] (d.center)--(e.center);
\node [blue] at (0, 0) {};
\node [blue] at (1, 0) {};
\draw [gray!60, ultra thick, <-] (-0.3,.65)--(-.3,.8);
\draw [gray!60, ultra thick, <-] (-0.15,.45)--(-.15,.8);
\draw [gray!60, ultra thick, <-] (0,.1)--(0,.8);
\draw [gray!60, ultra thick, <-] (0.15,.45)--(.15,.8);
\draw [gray!60, ultra thick, <-] (0.3,.65)--(.3,.8);
\draw [gray!60, ultra thick, <-, rotate around={60:(0,0)}] (-0.3,.65)--(-.3,.8);
\draw [gray!60, ultra thick, <-, rotate around={60:(0,0)}] (-0.15,.45)--(-.15,.8);
\draw [gray!60, ultra thick, <-, rotate around={60:(0,0)}] (0,.1)--(0,.8);
\draw [gray!60, ultra thick, <-, rotate around={60:(0,0)}] (0.15,.45)--(.15,.8);
\draw [gray!60, ultra thick, <-, rotate around={60:(0,0)}] (0.3,.65)--(.3,.8);
\node [below] at (0, 0) {$\scriptstyle{Q}$};
\node [below] at (1, 0) {$\scriptstyle{Q}$};
\end{scope}
\begin{scope}[shift={(9,0)},scale=3]
\node (a)[vert] at (-.5,.86) {};
\node (b)[vert] at (0.5, .86) {};
\node (c)[vert] at (-1, 0) {};
\node (d)[vert] at (0, 0) {};
\node (e)[vert] at (1, 0) {};
\draw [gray!50, ultra thick, fill=gray!20] (e.center) -- (b.center) -- (d.center) -- (e.center);
\draw [gray!50, ultra thick] (a.center)--(d.center);
\draw [gray!50, ultra thick, ->] (a.center)--(-.25,.43);
\draw [gray!50, ultra thick] (c.center)--(d.center);
\draw [gray!50, ultra thick, ->] (c.center)--(-.5,0);
\draw [gray!60, ultra thick, ->] (0.2,.25)--(.2,.05);
\draw [gray!60, ultra thick, ->] (0.35,.45)--(.35,.05);
\draw [gray!60, ultra thick, ->] (0.5,.75)--(.5,.05);
\draw [gray!60, ultra thick, ->] (0.65,.45)--(.65,.05);
\draw [gray!60, ultra thick, ->] (0.8,.25)--(.8,.05);
\node [below] at (0, 0) {$\scriptstyle{Q}$};
\node [below] at (1, 0) {$\scriptstyle{Q}$};
\draw [blue!50, ultra thick, fill=blue!50] (d.center)--(e.center);
\node [blue] at (0, 0) {};
\node [blue] at (1, 0) {};
\end{scope}

\end{tikzpicture}
\caption{The deformation retracts $X = X_{Q,2} \searrow X_{Q,1}$ and $X_{Q,1} \searrow X_{Q,0}$ as in Proposition~\ref{prop:prop}, with $P=\emptyset$. }
\label{fig:retract}
\end{figure}
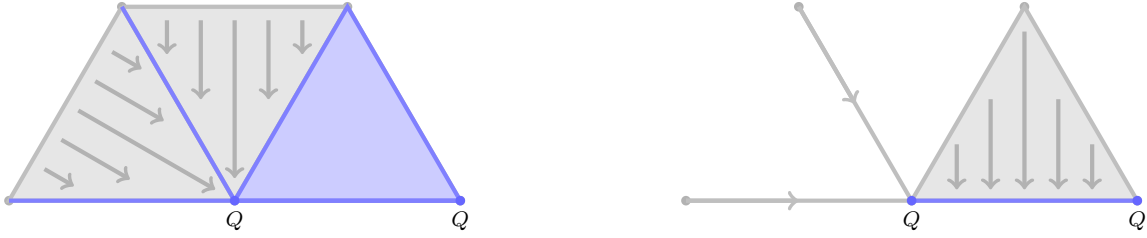

\begin{proposition}\label{prop:prop}
Let $X$ be a symmetric $\Delta$-complex.  Suppose $P,Q\subset X_0$ are properties satisfying the following conditions.

\begin{enumerate}
\item\label{it:(2)}  
The set of simplices $P^*(X)$ is co-$Q$-saturated.
\item\label{it:(1)}
The set of symmetric orbits of $X\smallsetminus P^*(X)$ admits canonical co-$Q$ maximal faces.
\end{enumerate}
Then there are strong deformation retracts
$(X_P\cup X_{Q,i})\searrow (X_P\cup X_{Q,i-1})$
for each $i>0$.  

If, in addition, every strictly $Q$-simplex is in $P^*(X)$, then there is a strong deformation retract $(X_P\cup X_Q) \searrow X_P$.
\end{proposition}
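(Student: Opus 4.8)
The plan is to deduce the retract $(X_P\cup X_Q)\searrow X_P$ formally from the tower of retracts $(X_P\cup X_{Q,i})\searrow (X_P\cup X_{Q,i-1})$, $i>0$, already constructed in the first part of the proposition, together with the observation that the extra hypothesis collapses the bottom of the tower onto $X_P$ itself. So the two things to do are: (a) recognize $X_P\cup X_{Q,0}$ as $X_P$, and (b) splice the homotopies realizing the individual retracts into a single strong deformation retract of $X_P\cup X_Q$ onto $X_P$.

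For (a), I would unwind the definition of $X_{Q,0}$. It is the subcomplex generated by $V_{Q,0}$, the set of $Q$-simplices having at most $0$ non-$Q$ vertices, i.e.\ the strictly $Q$-simplices; a face of a strictly $Q$-simplex is again strictly $Q$, so $X_{Q,0}$ consists precisely of all strictly $Q$-simplices. By the extra hypothesis every such simplex lies in $P^*(X)=\Simp(X_P)$, and since $X_P$ is a subcomplex this forces $X_{Q,0}\subseteq X_P$, hence $X_P\cup X_{Q,0}=X_P$.

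For (b), I would note that $X_Q=\bigcup_{i\ge 0}X_{Q,i}$ with $X_{Q,i}\subseteq X_{Q,i+1}$, so $X_P\cup X_Q=\bigcup_{i\ge 0}(X_P\cup X_{Q,i})$, and that any point of $|X_P\cup X_Q|$ lies in the relative interior of a simplex orbit with some finite number $d$ of non-$Q$ vertices, hence already lies in $|X_P\cup X_{Q,d}|$. I would then concatenate the homotopies $H_i$ realizing $(X_P\cup X_{Q,i})\searrow(X_P\cup X_{Q,i-1})$, run in decreasing order of $i$ on successive subintervals of $[0,1)$; since each $H_i$ is the identity on $X_P\cup X_{Q,i-1}$ and fixes $X_P$ for all time, the concatenation is well defined, restricts to the identity on $X_P$ throughout, and pushes $X_P\cup X_Q$ into $X_P\cup X_{Q,0}=X_P$. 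In the only case we apply this, where $X$ is a finite symmetric $\Delta$-complex, one has $X_P\cup X_Q=X_P\cup X_{Q,N}$ for $N=\dim X$, the composite is genuinely finite, and one simply invokes that a finite composition of strong deformation retracts is a strong deformation retract.

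The step I expect to require the most care is precisely this last assembly — verifying that the concatenated homotopy is continuous and a \emph{strong} deformation retract, not merely a deformation retract (i.e.\ that $X_P$ is fixed for all time, not just at the endpoints). For finite $X$ this is automatic, and in general it rests only on the concatenation being locally finite because the non-$Q$-vertex count is bounded on each simplex; everything else is bookkeeping with the definitions of $X_{Q,i}$, $P^*(X)$, and strictly $Q$-simplices.
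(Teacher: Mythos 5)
Your proposal addresses only the final sentence of the proposition, and treats the first — the existence of the strong deformation retracts $(X_P\cup X_{Q,i})\searrow (X_P\cup X_{Q,i-1})$ for each $i>0$ — as already given. But that first sentence is a conclusion to be proved, not a hypothesis; you describe it as ``already constructed in the first part of the proposition,'' which conflates the statement with its proof. This is the genuine gap: the essential content of the proposition, and the only place the hypotheses (1) and (2) (co-$Q$-saturation of $P^*(X)$ and canonicity of co-$Q$ maximal faces) actually enter, is the construction of those individual retractions. Your proposal never touches it.

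What is actually required there is nontrivial: on each maximal simplex $\sigma$ in the relevant layer one defines a retraction $r_{\sigma,i}$ of $\Delta^p$ by subtracting the minimum barycentric coordinate over the non-$Q$ vertices and redistributing it to the $Q$-vertices, and then one must check that these simplexwise maps glue to a well-defined continuous map on $X_P\cup X_{Q,i}$. Gluing across overlaps is exactly where canonicity of co-$Q$ maximal faces is used (two simplices sharing an interior face must in fact lie in the same $S_{p+1}$-orbit, and the face inclusions differ by an automorphism of the simplex, so the formulas agree), and fixing $X_P$ pointwise is exactly where co-$Q$-saturation of $P^*(X)$ is used (a point of $X_P$ cannot be pushed by $r_{\sigma,i}$ because the relevant co-$Q$ face would then also lie in $P^*(X)$, forcing the minimum to be zero). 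Without any version of these two verifications, the proposal cannot produce even the first retraction in the tower it then assembles.

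Your treatment of the remaining step — identifying $X_P\cup X_{Q,0}=X_P$ from the extra hypothesis and concatenating the finitely many homotopies — is correct and agrees with how the paper closes its proof, but that is bookkeeping; the proposition is not proved until the individual retractions are.
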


\noindent The basic ideas underlying Proposition~\ref{prop:prop} are discussed in Remark~\ref{rem:exegesis}, below.  We now give an example illustrating the conditions in  Proposition~\ref{prop:prop} on $X=\Delta_{g,n}$.  

\begin{example}\label{ex:properties-2}
Suppose $P\subset \Delta_{g,n}([0])$ is a property.  Because membership in $P(\Dgn)$ and $P^*(\Dgn)$ does not depend on edge-labeling, we say that $\Gmw$ is in $P$ if $(\Gmw,t)\in P(\Dgn)$ for any, or equivalently every, edge-labeling $t$.  Similarly, we say that $\Gmw$ is in $P^*$ if $(\Gmw,t)\in P^*(\Dgn)$ for any, or equivalently every, edge-labeling $t$.
 
Suppose $g>1$ and $n=0$, and let $P = P_{1,0}$ and $Q = P_{2,0}$, as defined in Example~\ref{ex:properties-1}.  
Note that $\Gmw \in P^*_{1,0}(X)$ if and only if $\Gmw$ has a loop or a positive vertex weight.
One may then check that $P$ and $Q$ satisfy the conditions of Proposition~\ref{prop:prop}.  The content is that every graph with a loop or weight, upon expansion by a $(2,0)$-bridge, still has a loop or weight; and that any graph with no loops or weights has a {\em canonical} expansion by $(2,0)$-bridges.  Furthermore, every strictly $Q$-simplex is in $P^*(\Dgn)$.   Then Proposition~\ref{prop:prop} asserts the existence of a deformation retraction $X_{P_{1,0}}\cup X_{P_{2,0}} \searrow X_{P_{1,0}}$.  In fact, this deformation retraction is the first step in the $n=0$ case of Theorem~\ref{thm:bridges}(\ref{item:1}).
\end{example}

\begin{example} \label{ex:nonexample-Dgn}
We give an example of a face inclusion in $\Dgn$ that is \emph{not} canonical.  Let $\Gmw$ and $\Gmw'$ be the graphs shown in Figure~\ref{f:canon}, on the left and right, respectively. 
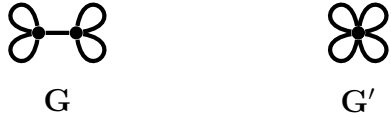
\begin{figure}[h!]
\begin{tikzpicture}[my_node/.style={fill, circle, inner sep=1.75pt}, scale=1]
\begin{scope}
\node[my_node] (A) at (-.25,0){};
\node[my_node] (B) at (.25,0){};
\draw[ultra thick] (A) to [out = 170, in = 100, looseness=15] (A);
\draw[ultra thick] (A) to [out = 190, in = 260, looseness=15] (A);
\draw[ultra thick] (A) to  (B);
\draw[ultra thick] (B) to [out = 10, in = 80, looseness=15] (B);
\draw[ultra thick] (B) to [out = -10, in = -80, looseness=15] (B);
\draw (0,-.9) node {$\Gmw$};
\end{scope}
\begin{scope}[shift = {(4,0)}]
\node[my_node] (A) at (0,0){};
\node[my_node] (B) at (0,0){};
\draw[ultra thick] (A) to [out = 170, in = 100, looseness=15] (A);
\draw[ultra thick] (A) to [out = 190, in = 260, looseness=15] (A);
\draw[ultra thick] (A) to  (B);
\draw[ultra thick] (B) to [out = 10, in = 80, looseness=15] (B);
\draw[ultra thick] (B) to [out = -10, in = -80, looseness=15] (B);
\draw (0,-.9) node {$\Gmw'$};
\end{scope} 
\end{tikzpicture}
\caption{The graphs $\Gmw$ and $\Gmw'$ on the left and right respectively. There is a morphism $\Gmw\rightarrow \Gmw'$ in $\J_{4,0}$ but it is {\em not} canonical with respect to $\Aut(\Gmw)$.}\label{f:canon}
\end{figure}
Note that $\Gmw'$ is isomorphic to a contraction of $\Gmw$.   Let us consider the equivalence relation on morphisms $\Gmw\rightarrow \Gmw'$ given by $\alpha_1 \sim \alpha_2$ if $\alpha_2 = \theta \alpha_1$ for some $\theta\in \Aut(\Gmw)$.  This equivalence relation partitions $\mathrm{Mor}(\Gmw,\Gmw')$ into exactly three classes, which are naturally in bijection with the three distinct unordered partitions of $E(\Gmw')$ into two groups of two.  In particular, there exist $\alpha_1,\alpha_2\colon \Gmw\rightarrow \Gmw'$ such that there is no $\theta\in \Aut(\Gmw)$ with $\alpha_2 = \theta \alpha_1$.
 Finally, by equipping $\Gmw$ and $\Gmw'$ appropriately with edge labelings $t$ and $t'$, respectively, this example can be promoted to an example of a face map in $\Dgn$ which is non-canonical.
This example shows that the full set of symmetric orbits of simplices in $\Dgn$ does {\em not} admit canonical co-$P_{2,0}$ maximal faces.  However, the set of symmetric orbits of $\Simp(\Dgn)\smallsetminus P_{1,0}^*(\Dgn)$ does, which is all that is required in Condition~\eqref{it:(1)}.
\end{example}

\begin{remark}\label{rem:exegesis}
We now sketch the idea of the proof of Proposition~\ref{prop:prop}, before proceeding to the proof itself.  Let us first assume that $P=\emptyset$.  In this case, Proposition~\ref{prop:prop} simplifies to the following: if $Q$ is a property such that the symmetric orbits of $X$ admit canonical co-$Q$ maximal faces, then there is a strong deformation retract $X_{Q,i} \searrow X_{Q,i-1}$ for each $i$.  These retractions are drawn in an example in Figure~\ref{fig:retract}.

Note that every $p$-simplex $\sigma \in V_{Q,i}$ which is not in $V_{Q,i-1}$ has precisely $p+1-i$ vertices in $Q$ and $i$ vertices not in $Q$.  
To such a simplex we shall associate a map $\Delta^p \to \partial \Delta^p$ by subtracting from the barycentric coordinates corresponding to vertices not in $Q$ and adding to the remaining ones, in a way that glues to a retraction $X_{Q,i} \to X_{Q,i-1}$.  Gluing the corresponding straight-line homotopies will give a homotopy from the identity to this retraction.  In order to carry this out, the main technical task is to verify that the different homotopies may in fact be glued, which is where Condition~\eqref{it:(1)} is used.

Now, dropping the condition that $P=\emptyset$ temporarily imposed in the previous paragraph, we obtain a {\em relative} version of the same argument.  In this case, Condition~\eqref{it:(2)} is needed in addition to guarantee that the relevant straight-line homotopies are constant on their overlap with $X_P$.  This relative formulation is useful to apply the proposition repeatedly over a sequence of properties $P_1,\ldots,P_n$. This sequential use of the proposition is packaged below as Corollary~\ref{cor:sequence}.
\end{remark}

The following definition and lemma will be used in the proof of Proposition~\ref{prop:prop}.  Let $P,Q\subset X_0$ be properties on $X$.
Recall that $Q(\sigma)$ and $Q^c(\sigma)$ denote the labels of the $Q$-vertices and non-$Q$-vertices of $\sigma$, respectively.
Let $i>0$, and let
\begin{equation}\label{eq:tqi-set}
S_i = \{\sigma\in V_{Q,i} :   \sigma\not\in P^*(X)\}.
\end{equation}
Let $T_i \subset S_i$ be the subset of simplices $\sigma$ whose symmetric orbits $[\sigma]$ are maximal with respect to the partial order $\preceq_Q$, and let $Y_i$ be the subcomplex of $X$ generated by $T_i$. 

\begin{definition}
Let $\sigma$ be a $p$-simplex of $X$ with $\sigma\in T_i$.  We define a homotopy $\rho_{\sigma,i}\colon \Delta^p\times[0,1]\to \Delta^p$ as follows. If $|Q^c(\sigma)|<i$ then $\rho_{\sigma,i}$ is the constant homotopy. Otherwise, 
define a map $$r_{\sigma,i}\colon\Delta^p\rightarrow \partial \Delta^p$$ as follows. Given $\ell \in \Delta^p$, let $$\gamma=\min_{e\in Q^c(\sigma)} \ell(e);$$ note that $\gamma=0$ is possible. Then define $r_{\sigma,i}(\ell)$ in coordinates by  
\begin{equation}\label{eq:cases}
r_{\sigma,i}(\ell)(e) = \begin{cases} \ell(e)-\gamma &\text{if } e\in Q^c(\sigma) \\ \ell(e)+\gamma i/|Q(\sigma)| &\text{if } e\in Q(\sigma).
\end{cases}
\end{equation}
(Note that $|Q(\sigma)|>0$ since $\sigma\in T_i$.)  Then let $\rho_{\sigma,i}$ be the straight line homotopy from the identity to $r_{\sigma,i}$.
\end{definition}

Now we prove two lemmas demonstrating that the homotopies $\rho_{\sigma,i}$ glue appropriately.  Write $\pi$ for the quotient map
\begin{equation}
 \pi\colon\Big(\coprod_{p = 0}^\infty X_p \times \Delta^p\Big) \rightarrow |X|
\end{equation}
 as in Equation~\eqref{eq:realization}, and write $\sim$ for the equivalence relation $(\sigma_1,\ell_1)\sim (\sigma_2,\ell_2)$ if $\pi(\sigma_1,\ell_1)=\pi(\sigma_2,\ell_2).$  We prove first that points in the inverse image of $X_P$ are fixed by each $\rho_{\sigma,i}$. 

\begin{lemma}\label{c:welldef1}
Let $X$ be a symmetric $\Delta$-complex and $P,Q\subset X_0$ properties on $X$ such that  $P^*(X)$ is co-$Q$-saturated.
Suppose $\sigma\in X_p$ is a simplex with $\sigma\in T_i$.  Given $\ell\in \Delta^p$, if $\pi(\sigma,\ell)\in X_P$ then $r_{\sigma,i}(\ell)=\ell$. Thus $\rho_{\sigma,i}(\ell, t)=\ell$ for all $t\in[0,1]$.
\end{lemma}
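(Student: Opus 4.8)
\textbf{Plan for the proof of Lemma~\ref{c:welldef1}.}
The plan is to analyze what it means for $\pi(\sigma,\ell)$ to lie in $X_P$, translate this into a statement about the support of $\ell$, and then read off from the definition~\eqref{eq:cases} that $r_{\sigma,i}$ fixes $\ell$. First I would recall that $X_P$ is the subcomplex generated by $P(X)$, so its geometric realization $|X_P| \subset |X|$ is the union of the closed simplices $\pi(\{\tau\}\times\Delta^q)$ over $\tau \in P^*(X)$. Hence $\pi(\sigma,\ell) \in X_P$ means precisely that $\pi(\sigma,\ell)$ lies in one of these closed simplices; equivalently, writing $\theta\colon[q]\hookrightarrow[p]$ for the face of $\sigma$ whose relative interior contains $\pi(\sigma,\ell)$ — that is, $\operatorname{im}\theta = \operatorname{supp}(\ell) := \{e \in [p] : \ell(e) > 0\}$ — the face $\theta^*\sigma$ must lie in $P^*(X)$.

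The key step is then to combine this with co-$Q$-saturation. Suppose for contradiction that $r_{\sigma,i}(\ell) \neq \ell$. By~\eqref{eq:cases} this forces $\gamma = \min_{e\in Q^c(\sigma)} \ell(e) > 0$, so every non-$Q$ vertex of $\sigma$ is in the support of $\ell$; that is, $Q^c(\sigma) \subset \operatorname{supp}(\ell) = \operatorname{im}\theta$. I claim this makes $\theta^*\sigma$ a co-$Q$ face of $\sigma$: indeed $[p]\smallsetminus\operatorname{im}\theta \subset [p]\smallsetminus Q^c(\sigma) = Q(\sigma)$, which is exactly the definition of $\theta$ being a co-$Q$ face map, so $\theta^*\sigma \precsim_Q \sigma$. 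Now $\theta^*\sigma \in P^*(X)$ by the previous paragraph, and $P^*(X)$ is co-$Q$-saturated by hypothesis, so $\sigma \in P^*(X)$. But $\sigma \in T_i \subset S_i$, and by the definition~\eqref{eq:tqi-set} of $S_i$ we have $\sigma \notin P^*(X)$ — a contradiction. Therefore $r_{\sigma,i}(\ell) = \ell$, and since $\rho_{\sigma,i}$ is the straight-line homotopy from the identity to $r_{\sigma,i}$, it is constant at $\ell$ for all $t\in[0,1]$.

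The main obstacle I anticipate is being careful about the bookkeeping in the first paragraph: one must correctly identify that a point of $|X|$ lies in $|X_P|$ iff the (unique up to automorphism) face supporting it represents an element of $P^*(X)$, rather than merely of $P(X)$. This uses that $X_P$ is a subcomplex — closed under taking faces — so that $|X_P|$ is a union of \emph{closed} simplices indexed by all of $P^*(X) = \Simp(X_P)$, together with the compatibility (emphasized after the definition of subcomplex) that the face maps of $X_P$ are restrictions of those of $X$. The rest is a direct unwinding of~\eqref{eq:cases} and the definitions of $\precsim_Q$, $S_i$, and $T_i$; no analytic estimates or nontrivial topology are needed beyond the structure of geometric realizations already set up in the excerpt.
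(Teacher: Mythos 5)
Your proposal is correct and takes essentially the same approach as the paper's proof: both identify the face $\tau=\theta^*\sigma$ carrying $\pi(\sigma,\ell)$, observe that $r_{\sigma,i}(\ell)\neq\ell$ forces $\gamma>0$ and hence $\tau\precsim_Q\sigma$, and then invoke co-$Q$-saturation of $P^*(X)$ against $\sigma\in T_i$. The paper phrases this as a contrapositive (``if $r_{\sigma,i}(\ell)\ne\ell$ then $\pi(\sigma,\ell)\notin X_P$'') while you phrase it as a contradiction, which is logically the same argument.
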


\begin{proof}[Proof of Lemma~\ref{c:welldef1}]
We prove the contrapositive, namely that if $r_{\sigma,i}(\ell)\ne \ell$ then $\pi(\sigma,\ell)$ is not in $X_P$.  In general, there exists some $q$, some $\tau\in X_q$ (uniquely determined up to $S_{q+1}$-action) and some $\ell'\in (\Delta^q)^\circ$ such that $(\sigma,\ell)\sim(\tau,\ell')$. Moreover, the assumption $r_{\sigma,i}(\ell)\ne \ell$ implies that $\gamma>0$ in Equation~\eqref{eq:cases}.    Therefore, $\tau\precsim_Q \sigma$.  Now $\sigma\not\in P^*(X)$ since $\sigma\in T_i$, so $\tau \not\in P^*(X)$ since $P^*(X)$ is co-$Q$ saturated.  Therefore $\pi(\sigma,\ell)=\pi(\tau,\ell') \not \in X_P$. 
\end{proof}

Next, we prove that the homotopies $\rho_{\sigma,i}$ agree on overlaps, as $\sigma$ ranges over $T_i$.

\begin{lemma}\label{c:welldef2}
Let $X$ be a symmetric $\Delta$-complex and $P,Q\subset X_0$ properties such that  
\begin{enumerate}
\item $P^*(X)$ is co-$Q$-saturated, and
\item the set of symmetric orbits of $X\smallsetminus P^*(X)$ admits canonical co-$Q$ maximal faces.
\end{enumerate}
Given $\sigma_1\in X_{p_1}$ and $\sigma_2\in X_{p_2}$ with $\sigma_1,\sigma_2\in T_i$, suppose $\ell_1\in \Delta^{p_1}$ and $\ell_2\in \Delta^{p_2}$ are such that $(\sigma_1,\ell_1)\sim(\sigma_2,\ell_2)$. Writing $r_1 = r_{\sigma_1,i}$ and $r_2=r_{\sigma_2,i}$ for short, we have $$(\sigma_1, r_1(\ell_1)) \sim(\sigma_2, r_2(\ell_2)).$$ 
Therefore $(\sigma_1, \rho_{\sigma_1,i}(\ell_1,t)) \sim(\sigma_2, \rho_{\sigma_2,i}(\ell_2,t))$  for all $t\in[0,1]$. 
\end{lemma}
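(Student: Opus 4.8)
The plan is to reduce the claim to a comparison inside a single simplex, where it becomes bookkeeping about automorphisms, and to locate that simplex using the two hypotheses. From $(\sigma_1,\ell_1)\sim(\sigma_2,\ell_2)$ the standard structure of the geometric realization produces a simplex $\tau\in X_q$, a point $\ell'\in(\Delta^q)^{\circ}$, and injections $\theta_j\colon[q]\hookrightarrow[p_j]$ with $\theta_j^*\sigma_j=\tau$, $(\theta_j)_*\ell'=\ell_j$ (where $(\theta_j)_*$ extends by zero off $\operatorname{im}\theta_j$), all unique up to the $S_{q+1}$-action; in particular $\supp\ell_j=\operatorname{im}\theta_j$. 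A face map sends the $a$-th vertex of $\theta_j^*\sigma_j$ to the $\theta_j(a)$-th vertex of $\sigma_j$, so $\theta_j$ restricts to bijections $Q(\tau)\to Q(\sigma_j)\cap\operatorname{im}\theta_j$ and $Q^c(\tau)\to Q^c(\sigma_j)\cap\operatorname{im}\theta_j$; hence $|Q^c(\tau)|\le|Q^c(\sigma_j)|$, with equality exactly when $\tau\precsim_Q\sigma_j$, and the quantity $\gamma_j=\min_{e\in Q^c(\sigma_j)}\ell_j(e)$ in the definition of $r_{\sigma_j,i}$ vanishes unless $\tau\precsim_Q\sigma_j$, in which case $\gamma_j=\min_{a\in Q^c(\tau)}\ell'(a)$, a quantity independent of $j$. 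Since $\gamma_j>0$ forces $|Q^c(\sigma_j)|=|Q^c(\tau)|=i$, one reads off that $r_{\sigma_1,i}(\ell_1)=\ell_1$ if and only if $r_{\sigma_2,i}(\ell_2)=\ell_2$; when both retractions are the identity the homotopies are constant and the conclusion is the hypothesis $(\sigma_1,\ell_1)\sim(\sigma_2,\ell_2)$ itself. So it remains to treat the case $|Q^c(\sigma_1)|=|Q^c(\sigma_2)|=i$ with $\tau\precsim_Q\sigma_1$, $\tau\precsim_Q\sigma_2$, $|Q^c(\tau)|=i$.

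In this case I would first apply hypothesis (1): since $\sigma_1\in T_i\subset X\smallsetminus P^*(X)$ and $\tau\precsim_Q\sigma_1$, co-$Q$-saturation of $P^*(X)$ forces $\tau\notin P^*(X)$. Now hypothesis (2) gives a unique $\preceq_Q$-maximal orbit $[\hat\sigma]$ among orbits of $X\smallsetminus P^*(X)$ above $[\tau]$, with $[\tau]\preceq[\hat\sigma]$ canonical. One checks $\hat\sigma\in T_i$: it lies in $X\smallsetminus P^*(X)$; it is a $Q$-simplex because $[\sigma_1]\preceq_Q[\hat\sigma]$ (as $\tau\precsim_Q\sigma_1$ and $\sigma_1\notin P^*(X)$) realizes $\sigma_1$, which has a $Q$-vertex, as a co-$Q$-face of $\hat\sigma$; the vertex bijection above gives $|Q^c(\hat\sigma)|=|Q^c(\tau)|=i$, so $\hat\sigma\in S_i$; and it is $\preceq_Q$-maximal in $S_i$, since any $S_i$-orbit above it lies in $X\smallsetminus P^*(X)$ and above $[\tau]$, hence equals $[\hat\sigma]$ by maximality. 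As $[\sigma_1],[\sigma_2]\preceq_Q[\hat\sigma]$ and both are $\preceq_Q$-maximal in $S_i$, we get $[\sigma_1]=[\sigma_2]=[\hat\sigma]$.

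Write $\sigma_2=\psi^*\sigma_1$ with $\psi\in S_{p_1+1}$. A direct check of the formula for $r_{\sigma,i}$ yields the equivariance $r_{\psi^*\sigma,i}=\psi^{-1}_*\circ r_{\sigma,i}\circ\psi_*$ (both $\gamma$ and the set $Q(\sigma)$ transform by $\psi$), and in particular $r_{\sigma_1,i}$ commutes with $\phi_*$ for $\phi\in\Aut(\sigma_1)$. Canonicity now enters: after a harmless adjustment of representatives, $\theta_1$ and $\psi\circ\theta_2$ are two co-$Q$ injections $[q]\hookrightarrow[p_1]$ pulling $\sigma_1$ back to $\tau$, so canonicity of $[\tau]\preceq[\sigma_1]$ produces $\phi\in\Aut(\sigma_1)$ with $\theta_1=\phi\circ\psi\circ\theta_2$, whence $\ell_1=(\theta_1)_*\ell'=\phi_*\psi_*\ell_2$ and $r_{\sigma_1,i}(\psi_*\ell_2)=\phi^{-1}_*r_{\sigma_1,i}(\ell_1)$. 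Therefore
\[
(\sigma_2,r_{\sigma_2,i}(\ell_2))=(\psi^*\sigma_1,\psi^{-1}_*r_{\sigma_1,i}(\psi_*\ell_2))\sim(\sigma_1,r_{\sigma_1,i}(\psi_*\ell_2))=(\sigma_1,\phi^{-1}_*r_{\sigma_1,i}(\ell_1))\sim(\sigma_1,r_{\sigma_1,i}(\ell_1)),
\]
using the identifications $(\psi^*\sigma_1,m)\sim(\sigma_1,\psi_*m)$ and $(\phi^*\sigma_1,m)\sim(\sigma_1,\phi_*m)$ together with $\phi^*\sigma_1=\sigma_1$. The statement for $\rho_{\sigma_j,i}$ follows by running the same computation on the straight-line interpolation $\rho_{\sigma_j,i}(\ell_j,t)=(1-t)\ell_j+t\,r_{\sigma_j,i}(\ell_j)$, since $\psi_*$, $\phi_*$ and extension by zero are affine.

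I expect the last step to be the crux, and it is the only point where both hypotheses really bite. Knowing $[\sigma_1]=[\sigma_2]$ and $(\sigma_1,\ell_1)\sim(\sigma_1,\psi_*\ell_2)$ does \emph{not} by itself guarantee that $\ell_1$ and $\psi_*\ell_2$ differ by an automorphism of $\sigma_1$: in a symmetric $\Delta$-complex an identification between two points of one simplex can pass through a proper face and back (as already happens for the loop with trivial automorphism group), and $r_{\sigma,i}$ need not respect such a detour. Canonicity of the face $[\tau]\preceq[\hat\sigma]$ is precisely what rules this out; co-$Q$-saturation is what guarantees that $\tau$ lies in the region $X\smallsetminus P^*(X)$ where hypothesis (2) is available. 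The supporting facts — the local structure of $|X|$ at $(\sigma_j,\ell_j)$, the vertex bijection for face maps, and the equivariance of $r_{\sigma,i}$ — are routine.
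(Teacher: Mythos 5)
Your proposal is correct and follows essentially the same route as the paper's proof: reduce to a common interior face $\tau$, split according to whether the retractions act nontrivially, use co-$Q$-saturation to place $\tau$ outside $P^*(X)$, use maximality of $\sigma_1,\sigma_2$ in $T_i$ together with canonicity to get $[\sigma_1]=[\sigma_2]$, and then use canonicity again to produce the automorphism that makes the two retracted points equivalent. The only organizational differences are cosmetic: you introduce the canonical maximal face $\hat\sigma$ as an explicit intermediary (the paper shows $[\sigma_1]$ and $[\sigma_2]$ are both maximal and appeals directly to uniqueness), you phrase the final step via a clean equivariance identity $r_{\psi^*\sigma,i}=\psi_*^{-1}\circ r_{\sigma,i}\circ\psi_*$ (the paper does the same computation inline after first reducing to $\sigma_1=\sigma_2$), and you dispose of the $\tau\in P^*(X)$ case implicitly through the trivial-retraction case rather than citing the previous lemma.
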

 
\begin{proof}[Proof of Lemma~\ref{c:welldef2}]
Again, there exists some $q$, some $\tau\in X_q$ (uniquely defined up to $S_{q+1}$-action) and some $\ell\in(\Delta^q)^\circ$ such $(\tau,\ell)\sim(\sigma_1,\ell_1)\sim(\sigma_2,\ell_2)$. Moreover if $\tau\in P^*(X)$ then we are done by Claim~\ref{c:welldef1}, so we assume $\tau\not\in P^*(X)$.  There are two cases.

First, if $|Q^c(\tau)|<i$, then for each $j=1,2$, either $\tau \not \precsim_Q \sigma_j$ or $\tau \precsim_Q\sigma_j$; in the first case we have $\min_{e \in Q^c(\sigma_j)} \ell(e) =0$, and in the second case we have $|Q^c(\sigma_j)| = |Q^c(\tau)| < i.$ Thus in both cases, 
 $r_1(\ell_1) = \ell_1$ and $r_2(\ell_2) = \ell_2,$ which proves the claim.

Second, if $|Q^c(\tau)|=i$, we have $[\tau]\preceq_Q[\sigma_1]$ and $[\tau]\preceq_Q[\sigma_2]$. In fact, for $j=1,2$, we claim $[\sigma_j]$ is maximal such that $[\tau]\preceq_Q[\sigma_j]$.  Indeed, if $[\tau]\preceq_Q [\sigma_j]\prec_Q [\sigma]$ for some $[\sigma]$, then
\begin{itemize}
\item $|Q(\sigma)|>0$, since $\sigma_j \precnsim_Q \sigma$;
\item $|Q^c(\sigma)|\le i$, since $|Q^c(\sigma)|=|Q^c(\sigma_j)|\le i$;
\item $\sigma\not \in P^*(X)$, since $\sigma_j\not \in P^*(X)$.
\end{itemize}
But this contradicts that $\sigma_j\in T_i$.  We note again that $\tau\not\in P^*(X)$ by assumption.  
Therefore $[\sigma_1]=[\sigma_2]$, by the hypothesis that the symmetric orbits of $X\smallsetminus P^*(X)$ admit canonical co-$Q$ maximal faces. 

Let us treat the special case $\sigma_1=\sigma_2$; the general case will follow easily from it.  Write $\sigma = \sigma_1=\sigma_2$ and $r = r_1=r_2$.  
For $j=1,2$, since $(\tau,\ell)\sim(\sigma,\ell_j)$, there exists $\alpha_j\colon [q]\hookrightarrow[p] $ such that $\alpha_j^*\sigma = \tau$ and $\ell_j = \alpha_{j*}\ell$.  By canonicity of co-$Q$-maximal faces, there exists $\theta\in\Aut(\sigma)$ with $\theta\alpha_1=\alpha_2$, so 
$$\ell_2 = \alpha_{2*}\ell = \theta_*\alpha_{1*}\ell = \theta_*\ell_1 = \ell_1\circ \theta.$$  Since $\theta\in\Aut(\sigma)$, $\theta e \in Q(\sigma)$ if and only if $e\in Q(\sigma)$ for all $e\in[p]$.  Therefore by Equation~\eqref{eq:cases} we have
$$(\sigma,r(\ell_1)) \sim (\sigma,r(\ell_1\circ \theta)) \sim (\sigma, r(\ell_2)),$$
as desired.

Finally, the general case follows from the previous one by replacing $\sigma_2$ with $\sigma_1 = \phi^*\sigma_2$ for some $\phi\colon[p]\rightarrow [p]$, and replacing $\ell_2$ with $\ell_2\circ\phi$.  Indeed, we have $(\sigma_1,\ell_1)\sim(\sigma_2,\ell_2)\sim(\sigma_1,\ell_2\circ\phi)$ and 
\begin{eqnarray*}
(\sigma_2,r_2(\ell_2))) &\sim& (\phi^*\sigma_2, r_2(\ell_2)\circ \phi)\\
&\sim&(\sigma_1, r_1(\ell_2\circ\phi))\\ 
&\sim&(\sigma_1,r_1(\ell_1)).
\end{eqnarray*}
The second equivalence follows from the fact that $e\in Q(\sigma_2)$ if and only if $\phi(e) \in Q(\phi^*\sigma_2) = Q(\sigma_1)$ for every $e\in[p]$. The last equivalence follows from the previous computation for the case $\sigma_1=\sigma_2$.   
This proves Claim~\ref{c:welldef2}.
\end{proof}

We now proceed with the proof of Proposition~\ref{prop:prop}.

\begin{proof}[Proof of Proposition~\ref{prop:prop}]
Let $X$ be a symmetric $\Delta$-complex, and let $P,Q\subset X_0$ be properties satisfying:

\begin{enumerate}
\item $P^*(X)$ is co-$Q$-saturated, and
\item the symmetric orbits of $X\smallsetminus P^*(X)$ admit canonical co-$Q$ maximal faces.
\end{enumerate}
We wish to exhibit a deformation retract $X_P\cup X_{Q,i} \searrow X_P\cup X_{Q,i-1}.$

Recall that $Y_i$ is the subcomplex of X generated by the set of simplices $T_i$; by the usual abuse of notation we will also write $Y_i$ for the homeomorphic image of its geometric realization in $|X|$.
First we note $X_P \cup X_{Q,i} = X_P \cup Y_i.$  The inclusion $\supset$ is clear since $\Simp(X_{Q,i})\supset T_i$. The inclusion $\subset$ is also apparent: suppose $\tau\in \Simp(X)$ has $|Q(\tau)|>0$ and $|Q^c(\tau)|\le i$.  If $\tau\in P^*(X)$ then its image in $|X|$ is in $X_P$.  Otherwise, $\tau\in S_i$, so $\tau \precsim_Q \sigma$ for some $\sigma\in T_i$, so the image of $\tau$ in $|X|$ lies in $Y_i$.

Therefore, we have a map $$r_{i}\colon (X_P\cup X_{Q,i}) \rightarrow X_P\cup X_{Q,i}$$
that is obtained by gluing the maps $r_{\sigma,i}$ for $\sigma\in T_i$, together with the constant map on $X_P$.  The fact that we may glue these maps together is the content of Lemmas~\ref{c:welldef1} and~\ref{c:welldef2}.   Moreover $r_i$ restricts to the constant map on $X_P\cup X_{Q,i-1}$ by construction.  

Next, we show that the image of $r_i$ is $X_P\cup X_{Q,i-1}$.  Let  $\sigma\in T_i$ be a $p$-simplex and let $\ell\in \Delta^p$.   Now 
there exists some $q$, some $\tau\in X_q$, and some $\ell'\in (\Delta^q)^\circ,$ such that $(\sigma,r_i(\ell))\sim(\tau,\ell')$.   
Examining~\eqref{eq:cases} shows that $|Q(\tau)|>0$ and $|Q^c(\tau)|<|Q^c(\sigma)|=i$.  Now if $\tau \in P^*(X)$, then $\pi(\sigma,r_i(\ell))\in X_P$. Otherwise, if $\tau\not\in P^*(X)$, then $\tau\in S_{i-1}$, so $\pi(\sigma,r_i(\ell))\in X_{Q,i-1}$.  This argument shows that the map $\rho_{i}\colon (X_P\cup X_{Q,i}) \times [0,1]\rightarrow X_P\cup X_{Q,i}$, defined to be the straight line homotopy associated to $r_i$, is a deformation retract onto $X_P\cup X_{Q,i-1}$.  
Thus we have a strong deformation retract $X_P\cup X_{Q,i} \searrow X_P\cup X_{Q,i-1}$ for each $i$, and hence a strong deformation retract $X_P\cup X_{Q} \searrow X_P\cup X_{Q,0}$.

Finally, we check that if every strictly $Q$-simplex is in $P^*(X)$, then $X_{Q,0} \subset X_P$.  Indeed, if this condition holds, then $$\Simp(X_{Q,0}) = \{\sigma\in \Simp(X) \ | \ Q^c(\sigma)=\emptyset\} \subset \Simp(X_P) = P^*(X),$$ so $X_{Q,0}\subset X_P$ as desired.  Thus, under this condition there is a strong deformation retract $X_P\cup X_Q \searrow X_P$, finishing the proof of the proposition.
\end{proof}

We record an obvious corollary of Proposition~\ref{prop:prop}, obtained by applying it repeatedly.

\begin{corollary}\label{cor:sequence}
Let $X$ be a symmetric $\Delta$-complex, and let $P_1,\ldots,P_N$ be a sequence of properties.
\begin{enumerate}
\item Suppose that for $i=2,\ldots,N$, the two properties $P=P_1\cup\cdots\cup P_{i-1}$ and $Q=P_i$ satisfy that
\begin{itemize}
\item $P^*(X)$ is co-$Q$-saturated, 
\item the symmetric orbits of $X\smallsetminus P^*(X)$ admit canonical co-$Q$ maximal faces, and
\item every strictly $Q$-simplex is in $P^*(X)$.
\end{itemize}
Then there exists a strong deformation retract
$$X_{P_1\cup\cdots\cup P_N}\searrow X_{P_1}.$$
\item If in addition 
the symmetric orbits of $X$ admit canonical co-$P_1$ maximal faces, then
there exists a strong deformation retract
$$X_{P_1\cup\cdots\cup P_N}\searrow X_{P_1,0}.$$ 
\end{enumerate}
Here the spaces $X_P$ and $X_{P,0}$, for a property $P$, are the ones defined in~\eqref{eq:dpi}.
\end{corollary}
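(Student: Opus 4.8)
The plan is to derive the corollary by repeated application of Proposition~\ref{prop:prop}, together with one elementary observation: for properties $P_1,\dots,P_k\subset X_0$ the union property $P_1\cup\cdots\cup P_k$ generates the subcomplex $X_{P_1\cup\cdots\cup P_k}=X_{P_1}\cup\cdots\cup X_{P_k}$. This is immediate from the definitions, since a simplex $\sigma$ has a vertex lying in $P_1\cup\cdots\cup P_k$ precisely when it has a vertex lying in some $P_j$; hence $(P_1\cup\cdots\cup P_k)(X)=\bigcup_j P_j(X)$, and the subcomplexes generated by these two sets of simplices coincide.

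For part (1), I would apply Proposition~\ref{prop:prop} for $i=2,\dots,N$ with $P:=P_1\cup\cdots\cup P_{i-1}$ and $Q:=P_i$. The three bulleted hypotheses of the corollary are exactly conditions \eqref{it:(2)} and \eqref{it:(1)} of Proposition~\ref{prop:prop} together with its ``in addition'' clause, so the proposition supplies a strong deformation retract $X_P\cup X_Q\searrow X_P$. By the observation above, $X_P\cup X_Q=X_{P_1\cup\cdots\cup P_i}$ and $X_P=X_{P_1\cup\cdots\cup P_{i-1}}$, so this reads
$$X_{P_1\cup\cdots\cup P_i}\searrow X_{P_1\cup\cdots\cup P_{i-1}}.$$
Concatenating these retracts for $i=N,N-1,\dots,2$ along the nested chain $X_{P_1\cup\cdots\cup P_N}\supseteq\cdots\supseteq X_{P_1}$ gives the desired $X_{P_1\cup\cdots\cup P_N}\searrow X_{P_1}$; here I use the standard fact that a finite composite of strong deformation retracts $A\searrow B\searrow C$ with $C\subseteq B\subseteq A$ is again a strong deformation retract $A\searrow C$, obtained by concatenating the homotopies (the earlier homotopy fixes $B\supseteq C$ pointwise, so the concatenation is well defined and fixes $C$ throughout).

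For part (2), I would additionally invoke Proposition~\ref{prop:prop} with $P:=\emptyset$ and $Q:=P_1$. Here $X_\emptyset=\emptyset$ and $P^*(X)=\emptyset^*(X)=\emptyset$, which is vacuously co-$P_1$-saturated, while $X\smallsetminus P^*(X)=X$ has symmetric orbits admitting canonical co-$P_1$ maximal faces by the extra hypothesis. Since $X_\emptyset\cup X_{P_1,j}=X_{P_1,j}$, the proposition (whose proof assembles the retracts $X_{P_1,j}\searrow X_{P_1,j-1}$ into a single strong deformation retract $X_{P_1}\searrow X_{P_1,0}$) then yields $X_{P_1}\searrow X_{P_1,0}$, and concatenating with the retract from part (1) gives $X_{P_1\cup\cdots\cup P_N}\searrow X_{P_1,0}$.

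There is no genuine obstacle here — this is flagged as an obvious corollary — so the only points requiring care are the identity $X_{\bigcup_j P_j}=\bigcup_j X_{P_j}$, the fact that strong deformation retracts compose along nested subspaces, and the bookkeeping that makes the degenerate choice $P=\emptyset$ in part (2) a legitimate input to Proposition~\ref{prop:prop}.
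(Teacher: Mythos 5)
Your proposal is correct and takes exactly the approach the paper intends: the paper itself describes the corollary as "obtained by applying [Proposition~\ref{prop:prop}] repeatedly" and offers no further proof, and your write-up simply spells out the bookkeeping (the identity $X_{\bigcup_j P_j}=\bigcup_j X_{P_j}$, the concatenation of nested strong deformation retracts, and the degenerate instance $P=\emptyset$ for part (2)), all of which is sound.
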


\subsection{Contractible subcomplexes of $\Dgn$}\label{subsec:blocks-etc}

Here, we prove contractibility of three natural subcomplexes of $\Dgn$.  First, recall that a {\em bridge} of a connected graph $G$ is an edge whose deletion disconnects $G$, and let $\br \subset \Dgn$ denote the closure of the locus of tropical curves with bridges.  It is the geometric realization of the subcomplex generated by those $\Gmw\in \J_{g,n}$ with bridges.  Next, we say that $\Gmw=(G,m,w)\in \J_{g,n}$ has {\em repeated markings} if the marking function $m$ is not injective.  Let $\rep \subset \Dgn$ be the locus of tropical curves with repeated markings.   Let $\weight$ be the locus of tropical curves with at least one vertex of positive weight, and let $\lw$ be the locus of tropical curves with loops or vertices of positive weights.   

Note that $\weight$ is a closed subcomplex of $\lw$ and both $\rep$ and $\lw$ are closed subcomplexes of $\br$.  For some purposes, it is most useful to contract the largest possible subcomplex.  However, contractibility of smaller subcomplexes is also valuable; for instance, the contractibility of $\weight$ allows us to identify the reduced homology of $\Dgn$ with graph homology in Theorem~\ref{thm:split}.

We recall the statement of Theorem~\ref{thm:contractible} assuming, as throughout, that $2g - 2 + n > 0$.

\contractible*

We will prove this theorem by applying Corollary~\ref{cor:sequence} to a particular sequence of properties, as follows.  Let $\Gmw=(G,m,w)\in \J_{g,n}$.  Recall from Example~\ref{ex:properties-1} that an edge $e \in E(G)$ is a {\em $(g',n')$-bridge} if $\Gmw/(E(G)-e) \cong\bfB(g',n')$, i.e., a $(g',n')$-bridge separates $\Gmw$ into subgraphs of types $(g',n')$ and $(g-g',n-n')$, respectively. 
We write $P_{g',n'}$ for the property $\{\bfB(g',n')\}.$

\begin{theorem}\label{thm:bridges}
Let $g>0$ and $X = \Delta_{g,n}$.
\begin{enumerate} 
\item If $n\ge 2$, then the sequence of properties
$$P_{0,n}, P_{0,n-1}, \ldots, P_{0,2}, P_{1,0}, P_{1,1},\ldots, P_{1,n}, P_{2,0}, \ldots P_{2,n},\ldots$$
satisfies both conditions of Corollary~\ref{cor:sequence}.  
\item\label{item:1} If $n=0$ or $1$ and $g>1$, then the sequence of properties
$$P_{1,0}, \ldots, P_{1,n}, P_{2,0}, \ldots P_{2,n},\ldots$$
satisfies both conditions of Corollary~\ref{cor:sequence}.  
\end{enumerate}
Each of these sequences of properties is finite.  The last term of each of the two sequences above is chosen so that each type of bridge is named once.  Precisely, if $g$ is even, the last term is $P_{g/2, \lfloor n/2\rfloor}$; if $g$ is odd, the last term is $P_{({g-1})/{2}, n}$.  
\end{theorem}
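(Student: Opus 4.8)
The theorem is, verbatim, the assertion that the two displayed sequences satisfy the hypotheses of Corollary~\ref{cor:sequence}, so the plan is to verify those hypotheses one property at a time. Write the sequence as $P_1,\dots,P_N$ and set $P^{(i)}=P_1\cup\dots\cup P_i$. For each $i=2,\dots,N$ I must check, with $P=P^{(i-1)}$ and $Q=P_i$, the three conditions of Corollary~\ref{cor:sequence}(1): \textbf{(I)} $P^*(X)$ is co-$Q$-saturated; \textbf{(II)} the symmetric orbits of $X\smallsetminus P^*(X)$ admit canonical co-$Q$ maximal faces; \textbf{(III)} every strictly $Q$-simplex lies in $P^*(X)$. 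I must also verify \textbf{(IV)}, that the symmetric orbits of all of $X=\Delta_{g,n}$ admit canonical co-$P_1$ maximal faces, where $P_1=P_{0,n}$ in case (1) and $P_1=P_{1,0}$ in case (2). Finiteness of the sequence and the description of its last term are immediate from $|E(\Gmw)|\le 6g-6+2n$, the identity $\bfB(g',A)=\bfB(g-g',[n]\smallsetminus A)$, and the stability constraints on $\bfB(g',n')$ recalled in Example~\ref{ex:properties-1}.

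Throughout I work intrinsically, using the dictionary of Examples~\ref{ex:properties-1}--\ref{ex:properties-2}: an edge of $\Gmw$ is a $P_{g',n'}$-edge iff it is a $(g',n')$-bridge; $\Gmw\in P_{g',n'}^*$ iff $\Gmw$ is a contraction of a graph $\Gmw_1$--bridge--$\Gmw_2$ with $\Gmw_1$ of type $(g',n')$; and co-$Q$ faces of $(\Gmw,t)$ are exactly the contractions of $\Gmw$ along sets of $(g',n')$-bridges. Since $P^{(i-1)}$ is a union of singleton properties, $(P^{(i-1)})^*(X)=\bigcup_{j<i}P_{g_j,n_j}^*(X)$. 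For \textbf{(I)} the point is that contracting a $(g',n')$-bridge neither creates a loop nor lowers a vertex weight, and more generally cannot destroy a subconfiguration witnessing membership in an \emph{earlier} $P_{g'',n''}^*$: a $(g',n')$-bridge can never sit inside a genus-$g''$, $n''$-marking piece when $(g'',n'')$ precedes $(g',n')$. For \textbf{(III)}, a strictly $Q$-simplex has a tree for its underlying graph; a one-edge one is a vertex $\bfB(g',A)$, and one with at least two edges contains either a positive-weight vertex or, in the genus-$0$ case, two leaves carrying $n'$ markings apiece. In each case I exhibit an explicit un-contraction placing the graph in an earlier $P^*$; a key (and non-obvious) point is that this often requires \emph{merging} markings from several vertices into one genus-$0$ piece (of size $>n'$ and $\le n$) or realizing a positive weight as a genus-$1$ tail, which is possible because contracting a bridge can raise the genus of a subgraph by absorbing a genus-carrying vertex from outside it --- this is precisely why the relevant earlier property is still available at that stage given the chosen order.

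The heart of the matter is \textbf{(II)} (and \textbf{(IV)}): the existence and canonicity of maximal co-$Q$ un-contractions. Given $\Gmw$ --- representing a simplex outside $P^*(X)$ for (II), arbitrary for (IV) --- I build its maximal co-$Q$ un-contraction $\widehat\Gmw$ by repeatedly splitting off genus-$g'$, $n'$-marking pieces along new bridges until stability forbids any further split. One then shows: the process terminates at a well-defined $\widehat\Gmw$; $[\widehat\Gmw]$ is the unique maximal element of $\{[\Gmw']:[\Gmw]\preceq_Q[\Gmw']\}$; and the inclusion $[\Gmw]\preceq[\widehat\Gmw]$ is canonical with respect to $\Aut(\widehat\Gmw)$. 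Canonicity is exactly where the hypothesis $\Gmw\notin P^*(X)$ --- equivalently, the placement of $P_{g',n'}$ in the sequence --- enters: the configurations for which $\widehat\Gmw$ would admit inequivalent contractions to $\Gmw$ (as in Example~\ref{ex:nonexample-Dgn}, where four loops at a vertex yield three inequivalent $(2,0)$-contractions) always involve loops, positive weights, or lower-type bridges, and so have already been swallowed into $P^*(X)$ at an earlier stage. Concretely, once $P^*(X)$ is removed each splittable genus-$g'$, $n'$-marking piece attaches along a \emph{canonical} set of half-edges, distinct splits occur at distinct vertices and do not interfere, and an automorphism of $\widehat\Gmw$ permuting the split-off pieces realizes all of the ambiguity in the face map. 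For \textbf{(IV)} the same argument is run for the first property over all of $X$, where no prior $P^*$ is needed: splitting off a genus-$0$ all-marking piece (case (1)), or a genus-$1$ unmarked tail --- which is forced to be a single loop --- (case (2)), is manifestly canonical.

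Thus the main obstacle is step \textbf{(II)}: proving the maximal co-$Q$ un-contraction canonical on $X\smallsetminus P^*(X)$. This is a delicate local analysis of how genus-$g'$, $n'$-marking pieces embed in a stable marked graph and interact with automorphisms, and it is the reason the sequence is ordered as it is --- genus-$0$ bridges first, then $(1,0)$, then the remaining genus-$1$ types, then genus $2$, and so on --- so that the failures of canonicity of Example~\ref{ex:nonexample-Dgn} are pushed into $P^*(X)$ before they can obstruct anything. Conditions \textbf{(I)}, \textbf{(III)}, \textbf{(IV)} are comparatively routine, modulo care with stability in degenerate cases such as $(g,n)=(1,2)$, where the sequence collapses to the single property $P_{0,2}$ and only \textbf{(IV)} is required. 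Once all four are in hand, Corollary~\ref{cor:sequence} supplies the deformation retracts needed in Theorem~\ref{thm:contractible}.
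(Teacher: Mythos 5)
Your proposal correctly identifies the architecture of the proof -- conditions (I)--(III) for each consecutive pair $(P,Q)$ of properties in the sequence, plus the separate condition (IV) that all of $X$ admits canonical co-$P_1$ maximal faces -- and you have the right high-level intuition for why the chosen ordering of bridge types matters: the configurations that would obstruct canonicity (as in Example~\ref{ex:nonexample-Dgn}) get absorbed into $P^*(X)$ before they can cause trouble. But the proposal is a \emph{plan}, not a proof: the crucial verifications are not carried out, only pointed at.

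The genuine gap is in condition (II) (and (IV)), the canonicity of the maximal co-$Q$ uncontraction, which you yourself flag as ``the heart of the matter.'' You write ``One then shows: the process terminates at a well-defined $\widehat\Gmw$; $[\widehat\Gmw]$ is the unique maximal element~\dots\ and the inclusion $[\Gmw]\preceq[\widehat\Gmw]$ is canonical,'' and later ``Concretely, once $P^*(X)$ is removed each splittable genus-$g'$, $n'$-marking piece attaches along a \emph{canonical} set of half-edges, distinct splits occur at distinct vertices and do not interfere.'' These are exactly the claims that require proof, not observations. The paper resolves them by introducing the block-graph machinery: the block decomposition of a marked weighted graph (Definition~\ref{def:blockgraph}), the labeling of its edges by types $(g',n')$, and Lemma~\ref{lem:art}, which gives an intrinsic and exhaustive classification of which bridge expansions a given articulation point admits and which do not. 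It is this lemma -- in particular part (2), describing all stable expansions at an articulation point via partitions of blocks, markings, and weight, and parts (3)/(4), relating block-graph labels to membership in $P_{g',n'}^*$ -- that makes ``canonical set of half-edges'' precise and lets one read off uniqueness of the maximal expansion and canonicity of the inclusion. Your proposal does not develop this framework or any substitute for it. Note also that your phrase ``distinct splits occur at distinct vertices'' isn't quite the mechanism: the paper's argument allows several bridge expansions at the \emph{same} articulation point, one per block labeled $(g',n')$; the intrinsic block-graph description is what guarantees there is no ambiguity even in that case.

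Conditions (I) and (III) are also only gestured at. For co-$Q$-saturation you offer the slogan ``a $(g',n')$-bridge can never sit inside a genus-$g''$, $n''$-marking piece when $(g'',n'')$ precedes $(g',n')$,'' but the paper's argument for the $g'\geq 1$, $(g',n')\neq(1,0)$ case is a two-step lemma-driven analysis: first showing that contracting a $(g',n')$-bridge does not introduce a repeated marking (which requires examining block-graph labels at the bridge endpoints via~\eqref{eq:sum-around-art}), and then invoking Lemma~\ref{lem:art}(4) on the labels of $E(\Bl(\Gmw/e))$. For (III) you correctly observe that a strictly $Q$-simplex is a tree with a single non-leaf vertex, but the explicit uncontractions you allude to (``realizing a positive weight as a genus-$1$ tail'', ``merging markings into one genus-$0$ piece'') need to be named and checked against the stability constraints case-by-case, as the paper does. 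In short: the scaffolding is right, but nearly all of the load-bearing content -- Lemma~\ref{lem:art} and its systematic application across the three regimes $Q=P_{0,n'}$, $Q=P_{1,0}$, $Q=P_{g',n'}$ with $g'\geq 1$ and $(g',n')\neq(1,0)$ -- is missing.
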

  
\begin{remark}\label{rem:when-nonempty}
With the standing assumption that $g>0$, the loci $\Dgn^{\rm lw}$ are never empty, and $\weight$ is empty only when $(g,n) = (1,1)$.  The locus $\Dgn^{\rm rep}$ is empty exactly when $n \leq 1$.  The locus $\Dgn^{\rm br}$ is empty exactly when $(g,n) = (1,1)$.  Otherwise, it contains $\Dgn^{\rm lw} \cup \Dgn^{\rm rep}$.
\end{remark}

\begin{proof}[Proof that Theorem~\ref{thm:bridges} implies Theorem~\ref{thm:contractible}]
First we show Theorem~\ref{thm:contractible}\eqref{it:br-contract}. We treat two cases: if $n\ge 2$, let $P_1,\ldots,P_N$ denote the sequence of properties in part (1) of Theorem~\ref{thm:bridges}(1); if $n\le 1$ and $g>1$, let $P_1,\ldots,P_N$ denote the sequence of properties in part (2) of Theorem~\ref{thm:bridges}.  In either case, $\cup P_i$ is the property of being a bridge, so $\Delta_{\cup P_i} = \Delta^\mathrm{br}_{g,n}$.  In the first case, $P_1=P_{0,n}$ is the property of being a $(0,n)$-bridge, and note that $\Delta_{P_{0,n},0}$ is a point: there is a unique (up to isomorphism) tropical curve whose edges are all $(0,n)$-bridges.  In the second case, $P_1=P_{1,0}$ is the property of being a $(1,0)$-bridge, and $\Delta_{P_{1,0},0}$ is a $(g\!-\!1)$-simplex, parametrizing nonnegative edge lengths on a tree with $g$ leaves of weight 1, and a central vertex supporting $n$ markings.  
Then by Theorem~\ref{thm:bridges}, we may apply Corollary~\ref{cor:sequence} to produce a deformation retract from $\Delta_{\cup P_i} = \Delta^\mathrm{br}_{g,n}$ to a contractible space.  
This shows Theorem~\ref{thm:contractible}\eqref{it:br-contract}.

We deduce Theorem~\ref{thm:contractible}\eqref{it:rep-contract} by considering only the subsequence of properties $P_1 = P_{0,n}, \ldots, P_{n-1}=P_{0,2}$. Indeed, $P_{0,n},\ldots,P_{0,2}$, being an initial subsequence of the properties listed in Theorem~\ref{thm:bridges}(1), also satisfies both conditions of Corollary~\ref{cor:sequence}.  Moreover $\Delta_{\cup P_i} = \rep$ and $\Delta_{P_{0,n},0}$ is a point.  So by Corollary~\ref{cor:sequence}, we conclude that $\rep$ is contractible for all $g>0$ and $n>1$.

For Theorem~\ref{thm:contractible}\eqref{it:loop-contract}, if $(g,n)=(1,1)$ the claim is trivial.  
Else, we verify directly that the properties $P=\emptyset$ and $Q=P_{1,0}$ satisfy the conditions~\eqref{it:(2)} and~\eqref{it:(1)} of Proposition~\ref{prop:prop}.  In other words, we verify directly that every graph admits a canonical maximal expansion by $(1,0)$-bridges.  If $\Gmw$ has no loops or weights, the expansion is trivial. Otherwise, the expansion is as follows: for any vertex $v$ with $$\val(v) + 2w(v) >3,$$  
replace every loop based at $v$ with a bridge from $v$ to a loop; add $w(v)$ bridges to vertices of weight $1$, and set $w(v)=0$.
Contractibility of the loop-and-weight locus follows from Proposition~\ref{prop:prop}, noting that this locus is exactly the subcomplex $(\Delta_{g,n})_{1,0}$ of $\Dgn$ whenever $(g,n)\ne (1,1)$.

The proof of Theorem~\ref{thm:contractible}\eqref{it:weight-contract} is similar.  If $(g,n)=(1,1)$ then $\weight$ is empty.  Otherwise, $\Delta^\mathrm{w}_{g,n}$ itself as a symmetric $\Delta$-complex, and we may consider the properties $P=\emptyset$ and $Q=P_{1,0}$ on it. This pair of properties still satisfies the conditions~\eqref{it:(2)} and~\eqref{it:(1)} of Proposition~\ref{prop:prop}, since any $\Gmw$ with a vertex of positive weight has a canonical maximal expansion of $\Gmw$ by $(1,0)$-bridges that again has a vertex of positive weight whenever $\Gmw$ does; this expansion is described above.  So by Proposition~\ref{prop:prop}, $\weight = (\weight)_{P_{1,0}}$ deformation retracts down to the subcomplex of $\weight$ consisting of graphs in which the only edges are $(1,0)$-edges, and this subcomplex is contractible.
\end{proof}

In order to prove Theorem~\ref{thm:bridges}, it will be convenient to develop a theory of {\em block decompositions} of stable weighted, marked graphs.  Let us start with usual graphs, without weights or markings.  If $G$ is a connected graph, we say $v\in V(G)$ is a cut vertex if deleting it disconnects $G$.  A {\em block} of $G$ is a maximal connected subgraph with at least one edge and no cut vertices. 

\begin{example}\label{ex:blocks} If $G$ is a graph on two vertices $v_1,v_2$ with a loop at each of $v_1$ and $v_2$ and $n$ edges between $v_1$ and $v_2$, then $G$ has three blocks: the loop at $v_1$, the loop at $v_2$, and the $n$ edges between $v_1$ and $v_2$.
\end{example}  

Returning to marked, weighted graphs, we define an {\em articulation point} of a stable, marked weighted graph $\Gmw = (G,m,w)$ to be a vertex $v\in V(G)$ such that at least one of the following conditions holds:
\begin{enumerate}[(i)]
\item $v$ is a cut vertex of $G$,
\item $w(v)>0$, or
\item $|m^{-1}(v)|\ge 2$.
\end{enumerate}
These are an analogue of cut vertices for marked, weighted graphs.  Let $\mathcal{A}$ denote the set of articulation points, and let $\mathcal{B}$ denote the set of blocks of the underlying graph~$G$.

\begin{definition}\label{def:blockgraph}
Let $\Gmw$ be a weighted, marked graph.  The {\em block graph} of $\Gmw$, denoted $\Bl(\Gmw)$, is a graph defined as follows. The vertices are $\mathcal{A}\cup \mathcal{B}$, and there is an edge $E=(v,B)$ from $v\in \mathcal{A}$ to $B\in\mathcal{B}$ if and only if $v \in B$.
\end{definition} 

\noindent In this way $\Bl(\Gmw)$ is naturally a tree, whose vertices are articulation points and blocks.  The block graph of the graph $G$ in Example~\ref{ex:blocks} is drawn in Figure~\ref{fig:blocks}. The vertices of the block graph are depicted as the blocks and articulation points to which they correspond. The edges of the block graph are drawn in blue.

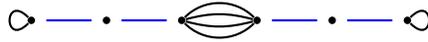
\begin{figure}[h!]
\begin{tikzpicture}[my_node/.style={fill, circle, inner sep=1pt}, scale=1]
\node[my_node] (L) at (-4,0){};
\draw[thick] (L) to [out = 135, in = 225, looseness=15] (L);
\node[my_node] (A1) at (-3,0){};
\node[my_node] (V1) at (-2,0){};
\node[my_node] (V2) at (-1,0){};
\draw[thick] (V1) to [bend right=45] (V2);
\draw[thick] (V1) to [bend right=15] (V2);
\draw[thick] (V1) to [bend left=15] (V2);
\draw[thick] (V1) to [bend left=45] (V2);
\node[my_node] (A2) at (0,0){};
\node[my_node] (R) at (1,0){};
\draw[thick] (R) to [out = 45, in = 315, looseness=15] (R);
\draw[blue, thick] (-3.8, 0) -- (-3.2,0);
\draw[blue, thick] (-2.8, 0) -- (-2.2,0);
\draw[blue, thick] (-.8, 0) -- (-.2,0);
\draw[blue, thick] (.2, 0) -- (.8,0);
\end{tikzpicture}
\caption{Block graph of $G$ as in Example~\ref{ex:blocks}.}\label{fig:blocks}
\end{figure}

At this point, we will equip both the articulation points and the blocks with weights and markings on the vertices, according to the following conventions.  If $v\in \mathcal{A} $ is an articulation point, we take it to have the weight and markings it has in $\Gmw$.  That is, $v$ has weight $w(v)$ and markings $m^{-1}(v)$.  If $B\in\mathcal{B}$ is a block, then we give each vertex $x\in V(B)$ weights and markings according to the following rule.  If $x\in\mathcal{A}$ then we equip it with weight $0$ and no markings.  Otherwise, we equip $x$  with the same weights and markings as it had in $\Gmw$.  In this way, we now regard each articulation point and each block as a weighted marked graph. We emphasize that these weighted marked graphs need not be stable.
We note
 $$\sum_{\mathbf{H} \in V(\Bl(\Gmw))} g(\mathbf{H}) = g,\qquad \sum_{\mathbf{H} \in V(\Bl(\Gmw))} n(\mathbf{H}) = n.$$
(Here $n(\mathbf{H})$ is the number of marked points.)  

It will be useful to label the edges of $\Bl(\Gmw)$ as follows.  Since $\Bl(\Gmw)$ is a tree, deleting any edge $\epsilon=(\bfv, B)$ divides $\Bl(\Gmw)$ into two connected components. Let $S$ be the set of vertices in the part containing $B\in V(\Bl(\Gmw))$; then we label the edge $\epsilon$ 
$$(g(\bfv,B),n(\bfv,B)):=\left(\sum_{\mathbf{H}\in S} g(\mathbf{H}), \, \sum_{\mathbf{H}\in S} n(\mathbf{H})\right).$$
A property of this labeling that we record for later use is that for every $\bfv\in \mathcal{A}$, 
\begin{equation}\label{eq:sum-around-art}
\sum_{B \ni v} g((\bfv,B)) + w(v) = g, \mbox{ \ \ and \ \ } \sum_{B \ni v} n((\bfv,B)) + |m^{-1}(v)| = n. 
\end{equation}

\begin{example}
Let $g>0$ and $n\ge 0$ with $(g,n)\ne (1,0),(1,1)$.  Suppose $\Gmw=(G,m,w)$ has a single vertex $v$ and $h$ loops.  Then $v$ has weight $g-h$ and $n$ markings, and there are $h$ blocks of $\Gmw,$ each a single unweighted, unmarked loop based at $v$. There is a single articulation point $\bfv$, equipped with weight $g-h$ and all $n$ markings.  The block graph $\Bl(\Gmw)$ is a star tree with $h$ edges from $\bfv$, each labeled $(1,0)$.
\end{example}

We make the following observations.

\begin{lemma}\label{lem:art} Let $\Gmw\in \J_{g,n}$.
\begin{enumerate}
\item \label{it:contract-to-art}
If $e\in E(G)$ is a bridge then its image vertex $v$ in $\Gmw/e$ is an articulation point.
\item \label{it:expand-art} Let $\bfv$ be an articulation point of $\Gmw$, with weight $u\ge 0$ and markings $m^{-1}(v) = M$, and with edges of $\Bl(\Gmw)$ at $\bfv$ labeled $(g_1,n_1),\ldots,(g_s,n_s)$.  Then $\bfv$ may be expanded into a bridge, with the result a stable marked, weighted graph, in any of the following ways.  Choose a partition of the edges of $\Bl(\Gmw)$ at $\bfv$ into two parts $P_1$ and $P_2$; choose a partition of the set $M$ into sets $M_1$ and $M_2$; and choose integers $w_1,w_2\ge 0$ with $w_1+w_2=u$, such that for $j=1,2$
$$\sum_{(\bfv,B)\in P_j} \val_B(v) + |M_j| + 2w_j \ge 2.$$
Here and below, $\val_B(v)$ denotes the number of half-edges at $v$ lying in $B$; it does not count any marked points.
By dividing the blocks, markings, and weight accordingly, $\bfv$ may be expanded into a bridge of type
\begin{equation}\label{eq:bridge-type}
  \left(\sum_{(\bfv, B)\in P_1} g((\bfv,B)) + w_1, \sum_{(\bfv, B)\in P_1} n((\bfv,B)) + |M_1|\right)
  \end{equation}
such that the result is stable; and no other stable expansions of $\bfv$ into bridges are possible.
\item \label{it:actually-spec}
If $\Bl(\Gmw)$ has an edge $\epsilon=(\bfv,B)$ labeled $(g',n')$, then $\Gmw \in P_{g',n'}^*$.
\item\label{it:converse}
Suppose $g'\ge 1$ and $w(v)=0$ for all $v\in V(G)$, and suppose every label $(g'',n'')$ on $E(\Bl(\Gmw))$ satisfies either $g''>g'$, or $g''=g'$ and $n''>n'$.  Then $\Gmw \not \in P_{g',n'}^*$.
\end{enumerate}
\end{lemma}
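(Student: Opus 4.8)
The plan is to derive all four assertions from stability together with the bookkeeping encoded in the block tree $\Bl(\Gmw)$ and its edge labeling; parts~(1)--(3) are short unwindings, while part~(4) carries the real content. For part~(1), I would take a bridge $e$ with endpoints $a,b$, write $G\setminus e=G_a\sqcup G_b$, and let $\bfv=[e]$ be its image in $\Gmw/e$. If $w(\bfv)=w(a)+w(b)>0$ or $|m^{-1}(\bfv)|\ge 2$ we are done; otherwise $w(a)=w(b)=0$ and $a,b$ carry at most one marking between them, so stability at the weight-$0$ vertex $a$ forces $\val(a)\ge 3$, and since one incidence is the half-edge of $e$ and at most one is a marking, $a$ has a half-edge inside $G_a$, so $G_a$ has an edge, and likewise $G_b$. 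Then $\Gmw/e$ is a union of two subgraphs, each with an edge, meeting only at $\bfv$, so $\bfv$ is a cut vertex and hence an articulation point.

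For part~(2), the key observation is that a block of $\Gmw$ containing $\bfv$ is either a single edge or contains a cycle through $\bfv$, and that in any expansion $\Gmw^+\to\Gmw$ collapsing a bridge $e=\{a',b'\}$ to $\bfv$ such a block must lie entirely on one side of $e$, for otherwise that cycle together with $e$ would be a cycle through $e$ in $\Gmw^+$. The weight of $\bfv$ (the sum of those of $a'$ and $b'$, as $e$ is not a loop) and the markings at $\bfv$ split likewise, so an expansion is exactly the data of partitions $(P_1,P_2)$ of the block-tree edges at $\bfv$, $(M_1,M_2)$ of $M$, and $w_1+w_2=u$; conversely each such choice produces a $\Gmw^+$ with $\Gmw^+/e=\Gmw$ in which $e$ is automatically a bridge, genus and $n$ are unchanged (as $b_1$ is fixed and weights only move), the bridge type is read off from~\eqref{eq:bridge-type}, and stability of $\Gmw^+$ reduces to stability of $a'$ and $b'$, which is the displayed inequality. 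Part~(3) then follows by choosing an expansion: given $\epsilon=(\bfv,B)$ labeled $(g',n')$, if $B$ is a single edge it is already a $(g',n')$-bridge of $\Gmw$ by the definition of the labeling, and $\Gmw\in P_{g',n'}\subseteq P_{g',n'}^*$; otherwise $\val_B(\bfv)\ge 2$, and applying part~(2) with $P_1=\{\epsilon\}$, $M_1=\emptyset$, $w_1=0$ yields a $(g',n')$-bridge unless the vertex not carrying $B$ is unstable, in which case~\eqref{eq:sum-around-art} forces $w(\bfv)=0$ and $\bfv$ to lie in exactly one further block, necessarily a single edge $B'$ with no marking at $\bfv$ -- and then $B'$ is itself a $(g',n')$-bridge of $\Gmw$.

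For part~(4) I would argue by contradiction: suppose $f\colon\Gmw^+\to\Gmw$ realizes $\Gmw\in P_{g',n'}^*$ via a $(g',n')$-bridge $e$ of $\Gmw^+$, with sides $C_1,C_2$ and $g(C_1)=g'$, $n(C_1)=n'$. If $f$ does not collapse $e$, then $f(e)$ is still a bridge of $\Gmw$ -- collapsing other edges cannot reconnect $C_1$ to $C_2$, and each side's $b_1$ and total vertex weight are preserved by contractions, so the type is preserved -- hence $f(e)$ is a $(g',n')$-bridge of $\Gmw$, forming a single-edge block one of whose two incident block-tree edges has label exactly $(g',n')$, contradicting the hypothesis. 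If $f$ does collapse $e$, I would first collapse all the other contracted edges; since the collapsed forest minus $e$ never has an edge running from $C_1$ to $C_2$, this reduces us to $\Gmw=\Gmw^+/e=C_1\cup_{\bfv}C_2$, a one-point union at the image $\bfv$ of $e$, which is an articulation point by part~(1). Since all weights of $\Gmw$ vanish, $b_1(C_1)=g'\ge 1$, so $\bfv$ lies in some block $B\subseteq C_1$; for every such $B$ the $B$-side of $\Bl(\Gmw)$ is a subgraph of $C_1$, so $g(\bfv,B)=b_1(B\text{-side})\le b_1(C_1)=g'$ because $b_1$ is monotone under passage to subgraphs, and the hypothesis forces $g(\bfv,B)=g'$ and $n(\bfv,B)>n'$. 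But the cycle spaces of the $B$-sides of the blocks at $\bfv$ inside $C_1$ are independent and sum to $b_1(C_1)=g'$, so there is exactly one such block $B_1$, whose $B_1$-side lies in $C_1$ and excludes $\bfv$ together with the markings $\bfv$ inherits from $C_1$; hence $n(\bfv,B_1)\le n(C_1)=n'$, contradicting $n(\bfv,B_1)>n'$.

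The step I expect to be the main obstacle is part~(4), and within it the reduction (when the bridge is collapsed) to a single edge contraction together with the genus/marking bookkeeping needed to isolate the unique block $B_1$ and bound $n(\bfv,B_1)$ by $n'$ via~\eqref{eq:sum-around-art} and subgraph-monotonicity of $b_1$. Parts~(1)--(3) should be routine once the block tree and the formulas~\eqref{eq:bridge-type} and~\eqref{eq:sum-around-art} are in hand.
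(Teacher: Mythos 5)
Your proposal is correct, and for parts (1)--(3) it is essentially the paper's argument: (1) and (2) are exactly the ``easy to check'' verifications the paper leaves implicit (splitting blocks, markings, and weight across the two endpoints of the expanded bridge, with stability of the two new vertices giving the displayed inequality), and your proof of (3) reproduces the paper's case analysis, including the degenerate case where the complementary side of $\bfv$ would be unstable and one falls back on the single remaining block being itself a $(g\!-\!g',n\!-\!n')$-bridge. Where you genuinely diverge is part (4). The paper deduces (4) directly from (1) and (2): a witness for $\Gmw\in P_{g',n'}^*$ gives an articulation point admitting a $(g',n')$-bridge expansion, whose type must be of the form \eqref{eq:bridge-type}; since $g'\ge 1$ and $w_1=0$ force $P_1\ne\emptyset$, and every block-tree label exceeds $(g',n')$ lexicographically by hypothesis, the resulting type strictly exceeds $(g',n')$ --- a three-line contradiction. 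You instead analyze the witnessing morphism by hand, reduce to a single edge contraction, and re-derive from scratch that $b_1(C_1)$ decomposes as the sum of the labels $g(\bfv,B)$ over blocks at $\bfv$ inside $C_1$, isolating a unique block $B_1$ and bounding $n(\bfv,B_1)\le n'$ via \eqref{eq:sum-around-art}. Both arguments are sound; the paper's buys brevity by exploiting the ``no other stable expansions are possible'' clause of (2) as a classification, while yours is self-contained and, as a side benefit, explicitly handles the case where the witnessing morphism does not collapse the bridge (your Case A), which the paper's phrasing glosses over. The cost is that your Case B essentially re-proves, inside the proof of (4), the block-tree bookkeeping that (2) and the labeling conventions were set up to encapsulate.
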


\begin{proof}
Statements~\eqref{it:contract-to-art} and~\eqref{it:expand-art} are easy to check.  Statement~\eqref{it:converse} then follows: if $\Gmw\in P_{g',n'}^*$ then (1) and (2) imply that some articulation point $v$ may be expanded into a bridge of type $(g',n')$, with $$(g',n') =  \left(\sum_{(\bfv, B)\in P_1} g((\bfv,B)) + w_1, \sum_{(\bfv, B)\in P_1} n((\bfv,B)) + |M_1|\right)$$ 
for some choice of partition $P_1\sqcup P_2$ of the blocks at $v$.  Since $g' > 0$ and $w_1 = w(v) = 0$ we must have $P_1\ne \emptyset$, but then the expression in~\eqref{eq:bridge-type} exceeds $(g',n')$ in lexicographic order.

  For statement~\eqref{it:actually-spec}, suppose $\epsilon=(\bfv, B)$ is labeled $(g',n')$. If $B$ itself is a $(g',n')$-bridge we are done. Otherwise $\val_{B} (v) \ge 2$.  Write $B_1,\ldots,B_s$ for the remaining blocks at $\bfv$.  If $\sum_{j=1}^s \val_{B_j} (v) + |m^{-1}(v)| + 2w(v) \ge 2$ then $\bfv$ can be expanded into a $(g',n')$-bridge by~\eqref{it:expand-art}.  So assume  $$\sum_{j=1}^s \val_{B_j} (v) + |m^{-1}(v)| + 2w(v) \le 1.$$  The only possibility consistent with $\bfv$ being an articulation point is $s=1$, $\val_{B_1}(v) = 1$, $|m^{-1}(v)|=0$, and $w(v) = 0$.  Thus $B_1$ is a bridge, and the identities~\eqref{eq:sum-around-art} show that $B_1$ is a $(g\!-\!g',n\!-\! n')$-bridge, which is the same as a $(g',n')$-bridge.
\end{proof}

Now we turn to the proof of Theorem~\ref{thm:bridges}.

\begin{proof}[Proof of Theorem~\ref{thm:bridges}]
Fix $g>0$ and $n\ge 0$.  If $n\ge 2$, let $P_1,P_2,\ldots$ be the sequence of properties
$$P_{0,n},\ldots,P_{0,2},P_{1,0}, \ldots,P_{1,n},P_{2,0},\ldots,P_{2,n},\ldots.$$
If $n=0$ or $1$ and $g\ne 1$, let $P_1,P_2,\ldots$ be the sequence of properties
$$P_{1,0}, \ldots,P_{1,n},P_{2,0},\ldots,P_{2,n},\ldots.$$
We need to check:
\begin{enumerate}[(i)]
\item for each $i=2,3,\ldots$ the properties $P=P_1\cup \cdots \cup P_{i-1}$ and $Q=P_i$ satisfy the two conditions of Proposition~\ref{prop:prop}, and every strictly $Q$-simplex is in $P^*$.
\item the symmetric orbits of $X$ admit canonical co-$Q$-maximal faces. 
\end{enumerate}
\noindent Item (ii) above is exactly the statement that the properties $P=\emptyset$ and $Q=P_1$ satisfy the second condition of Proposition~\ref{prop:prop}. 

{\bf Condition~\eqref{it:(1)} of Proposition~\ref{prop:prop}.}  For each $i=1,2,\ldots,$ let $P=P_1\cup\cdots\cup P_{i-1}$ and $Q=P_i$. Let us check that condition~\eqref{it:(1)} of Proposition~\ref{prop:prop} holds.    Let $Q = P_{g',n'}$. Suppose $\Gmw\in \J_{g,n}$ is not in $P^*$. We need to show that $\Gmw$ admits a maximal uncontraction $\alpha\colon \widetilde{\Gmw}\rightarrow\Gmw$ by $(g',n')$-bridges, which is canonical in the sense that for any $\alpha'\colon \widetilde{\Gmw}\rightarrow\Gmw$, there exists an automorphism $\theta\colon \widetilde{\Gmw}\rightarrow \widetilde{\Gmw}$ such that $\alpha'\theta=\alpha$.  Informally speaking, we are saying that $\widetilde{\Gmw}$ may be described in a way that is intrinsic to $\Gmw$.  We treat three cases: 
\begin{itemize}
\item $Q=P_{g',n'}$ with $g'\ge 1$ and $(g',n')\ne(1,0)$; 
\item $Q=P_{1,0}$; and   
\item $Q=P_{0,n'}$ for some $n'$.  
\end{itemize}
The case $Q=P_{0,n'}$ is only needed when $n\ge 2$.

First, assume $Q=P_{g',n'}$ with $g'\ge 1$ and $(g',n')\ne(1,0)$.  Let $\bfv$ be any articulation point.  
Now either $n\le 1$, or $n\ge 2$ and $\Gmw$ is assumed not to be a $(0,2)$-contraction since $P_{0,2}\subset P$. Therefore $\Gmw$ has no repeated markings. Since $P_{1,0}\subset P$ and $\Gmw$ is assumed not to be in $P^*(X)$, $\Gmw$ has no vertex weights.
 Let $B\in\Bl(\bfv)$, and let $(g'',n'')$ be the label of $\epsilon=(v,B)\in E(\Bl(\Gmw))$.  Then by Lemma~\ref{lem:art}\eqref{it:actually-spec}, $\Gmw \in P_{g'',n''}^*$. Referring to the chosen ordering of properties, it follows that $g''\ge g'$, and if $g''=g'$ then $n''>n'$.  Now using the criterion on Lemma~\ref{lem:art}\eqref{it:expand-art}, we conclude that the only $(g',n')$-bridge expansions admitted at $\bfv$ are along the pairs $(v,B)$ labeled exactly $(g',n')$ where $B$ is not itself a bridge, and there is a unique maximal such expansion which is canonical in the previously described sense.

Second, assume $Q=P_{1,0}$.  Then by Lemma~\ref{lem:art}\eqref{it:expand-art}, the maximal $(1,0)$-bridge expansion of $\Gmw$ is obtained by replacing, for any vertex $v$ with $\val(v) + 2w(v) >3$, every loop based at $v$ with a bridge from $v$ to a loop; adding $w(v)$ bridges to vertices of weight 1, and setting $w(v)=0$.  Moreover this expansion is canonical.

Finally, assume $Q=P_{0,n'}$; this case is only needed when $n\ge 2$.  Consider an articulation point $\bfv$, and let $B_1,\ldots,B_k$ be the blocks at $v$ labelled $(0,n_1),\ldots,(0,n_k)$ for some $n_i$. We are assuming that $\Gmw$ is not in $P^*$; in this case the chosen ordering of properties implies that $P_{0,n''}\subset P$ for each $n''>n'.$  Therefore, by Lemma~\ref{lem:art}\eqref{it:expand-art}, $\sum n_i + |m^{-1}(v)| \le n'$. Furthermore, $\bfv$ can be expanded into a $(0,n')$-bridge if and only if equality holds, so long as it is not the case that $k=1$ and $B_1$ is itself a $(0,n')$-bridge.  This analysis, performed at all articulation points, produces the unique maximal $(0,n')$-bridge expansion of $\Gmw$, and this expansion is canonical.  This verifies condition~\eqref{it:(1)} of Proposition~\ref{prop:prop}.

{\bf Condition~\eqref{it:(2)}   of Proposition~\ref{prop:prop}}.  Again, let $i=1,2,\ldots,$ let $P=P_1\cup\cdots\cup P_{i-1}$ and $Q=P_i$; we now check that condition~\eqref{it:(2)} of Proposition~\ref{prop:prop} holds.  Suppose $Q = P_{g',n'}$.   We want to show that if $\Gmw\in \J_{g,n}$ is not in $P^*$ and $\Gmw'$ is obtained by contracting $(g',n')$-bridges, then $\Gmw'$ is also not in $P^*$.  We consider the same three cases.

First, assume $g'=0$, that is, $Q = P_{0,n'}$; we only need this case if $n\ge 2$.  The assumption $\Gmw\not\in P^*$ means that $\Gmw \not\in P_{0,n''}^*$ for any $n''>n'$.  Let us describe what these assumptions imply.  First, let $C$ denote the core of $\Gmw$, as defined in \S\ref{subsec:J-g-n}.  Then $G-E(C)$ is a disjoint union of trees $\{Y_v\}_{v\in V(C)}$.  Say that a core vertex $v \in V(C)$ {\em supports} a marked point $\alpha\in\{1,\ldots,n\}$ if $m(\alpha)\in Y_v$.  Then observe that for any $\Gmw\in \J_{g,n}$, the following are equivalent:
\begin{enumerate}[(1)]
\item $\Gmw \not \in P_{0,n''}^*$ for any $n''>n'$;
\item every core vertex of $\Gmw$ supports at most $n'$ markings.
\end{enumerate}
Now, we are assuming that $\Gmw$ satisfies (1), so it satisfies (2). Moreover (2) is evidently preserved by contracting $(0,n')$-bridges, since those operations never increase the number of markings supported by a core vertex.  So (1) is also preserved by contracting $(0,n')$-bridges, which is what we wanted to show.

Second, assume $Q=P_{1,0}$.  If $n\le 1$ then $P=\emptyset$ and we are done.  Otherwise, $P= P_{0,n}\cup \cdots \cup P_{0,2}$, and a graph $\Gmw$ is in $P^*$ if and only if $\Gmw$ has repeated markings.  The property $P$ is evidently preserved by uncontracting $(1,0)$-bridges, so we are done.

Third, assume $Q=P_{g',n'}$ with $g'\ge 1$  and $(g',n')\ne (1,0)$.  
Let $e\in E(G)$ be a $(g',n')$-bridge; we assume that $\Gmw \not \in P^*$ and we wish to show that $\Gmw/e \not \in P^*$.

First, in the case $n\ge 2$, we need to show that $\Gmw/e \not \in P_{0,n''}^*$ for any $n''$, i.e.,~$\Gmw/e$ has no repeated markings.  Since $\Gmw$ has no repeated markings, it suffices to show that not both ends of $e=v_1v_2$ are marked.  We may assume that the edge $(v_1,e)$ in $\Bl(\Gmw)$ was labeled $(g\!-\!g',n\!-\!n')$; we will show $v_1$ is unmarked.   Since $\Gmw \not \in P_{0,n''}^*$ for any $n''$ and $\Gmw\not \in P_{1,0}^*$, we have $w(v_1)=0$ and $v_1$ is at most once-marked. Therefore, there is at least one other block $B\ne e$ at $v_1$ and $(v_1,B)\in E(\Bl(\Gmw))$ is labelled $(>\!g',\ast)$ or $(g',\ge\! n')$.  In light of Equation~\eqref{eq:sum-around-art}, the only possibility is that there is only one such block $B$, $(v_1,B)$ is labelled $(g',n')$, and $v_1$ is unmarked.  Therefore $\Gmw/e$ has no repeated markings.  

Next, by Lemma~\ref{lem:art}\eqref{it:actually-spec}, every label $(g'',n'')$ on $E(\Bl(\Gmw))$ satisfies either $g''>g$, or $g''=g'$ and $n''\ge n'$.  Furthermore, the labels on $E(\Bl(\Gmw/e))$ are a subset of those on $E(\Bl(\Gmw))$.  Therefore by Lemma~\ref{lem:art}\eqref{it:converse}, $\Gmw/e \not \in P_{g'',n''}^*$ for any $g''<g'$ or $g''=g$ and $n''<n'$, as long as $g''\ge 1$.  We have verified that condition~\eqref{it:(2)} of Proposition~\ref{prop:prop} holds in the required cases.

To treat the last condition, regarding strictly co-$Q$ faces being in $P^*$, we assume all edges of $\Gmw\in \J_{g,n}$ are $(g',n')$-bridges. Then $G$ must be a tree with a single non-leaf vertex $v$, while every other vertex $v'$ has $w(v')=g'$ and $|m^{-1}(v)| = n'$.  Now we treat the following cases.

Suppose $Q=P_{g',n'}$ with $g'\ge 1$ and $(g',n')\ne (1,0)$. If $\Gmw$ has only $(g',n')$-edges then $\Gmw\in P_{1,0}^*$, since $\Gmw$ has positive weights. Therefore $\Gmw \in P^*$.

Next, suppose $Q=P_{1,0}$. If $\Gmw$ has only $(1,0)$-edges, then either $n\le 1$ and there is nothing to check, or $n\ge 2$ and so $v$ supports $n\ge 2$ markings.  Then $\Gmw \in P_{0,2}^*$,  so $\Gmw\in P^*$.  

Finally, suppose $n\ge 2$ and $Q=P_{0,n'}$ for $n'<n$. If $\Gmw$ has only $(0,n')$-edges for some $n'<n$, note that $w(v)=g$ and so $v$ may be expanded into a $(g,0)$-bridge, equivalently a $(0,n)$-bridge. So $\Gmw\in P_{0,n}^*$, and hence $\Gmw \in P^*$, as required.
\end{proof}

To close this section, we record a related contractibility result that will be useful for future applications.
Let $\Delta_{g,n}^{\mathrm{w}}$ denote the subcomplex of $\Delta_{g,n}$ parametrizing tropical curves that have at least one positive vertex weight.

\begin{lemma}\label{lem:x-contractible}  For all $g>0$, $\Delta_{g,n}^{\mathrm{w}}\cup  \Delta_{g,n}^{\mathrm{rep}}$ is contractible, unless $(g,n)=(1,1)$ in which case it is empty.
\end{lemma}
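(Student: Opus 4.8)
The plan is to combine parts \eqref{it:weight-contract} and \eqref{it:rep-contract} of Theorem~\ref{thm:contractible} with a Mayer--Vietoris-type gluing argument. First I would dispose of the cases $n\le 1$: then $\rep=\emptyset$ by Remark~\ref{rem:when-nonempty}, so the space in question is just $\weight$, which is contractible unless $(g,n)=(1,1)$ (in which case it is empty) by Theorem~\ref{thm:contractible}\eqref{it:weight-contract} and Remark~\ref{rem:when-nonempty}. So I may assume $n\ge 2$; in particular $(g,n)\ne(1,1)$, and $\weight$, $\rep$ are both nonempty and contractible by Theorem~\ref{thm:contractible}. Since $\weight$ and $\rep$ are subcomplexes of $\Dgn$, the inclusions $\weight\cap\rep\hookrightarrow\weight$ and $\weight\cap\rep\hookrightarrow\rep$ are cofibrations, so the ordinary pushout $\weight\cup_{\weight\cap\rep}\rep=\weight\cup\rep$ is a homotopy pushout; by the gluing lemma for contractible spaces it then suffices to prove that the subcomplex $\weight\cap\rep$ is contractible. (It is nonempty: for instance it contains the vertex $\bfB(0,[n])$, which has a vertex of weight $g>0$ and a vertex carrying all $n\ge 2$ marked points.)

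To see that $\weight\cap\rep$ is contractible, I would revisit the deformation retraction used in the deduction of Theorem~\ref{thm:contractible}\eqref{it:rep-contract} from Theorem~\ref{thm:bridges}: applying Corollary~\ref{cor:sequence} to the initial segment $P_{0,n},P_{0,n-1},\dots,P_{0,2}$ of the sequence in Theorem~\ref{thm:bridges}(1) produces a strong deformation retract $\rep=\Delta_{P_{0,n}\cup\cdots\cup P_{0,2}}\searrow\Delta_{P_{0,n},0}$, where $\Delta_{P_{0,n},0}$ is the single point $\bfB(0,[n])$. The key claim is that this deformation retraction restricts to the subcomplex $\weight\cap\rep$. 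Via the construction in \S\ref{sec:machine} this reduces to checking that whenever $\tau\in\weight\cap\rep$ and $[\tau]\preceq_Q[\sigma]$ for $Q=P_{0,n'}$ --- i.e.\ $\sigma$ is obtained from $\tau$ by uncontracting a set of $(0,n')$-bridges --- then $\sigma\in\weight\cap\rep$. Indeed, such an uncontraction does not change the multiset of vertex weights, since a $(0,n')$-bridge cuts off a genus-zero, hence weightless, subgraph; and it does not destroy repeated markings, since stability forces $n'\ge 2$ (cf.\ Example~\ref{ex:properties-1}), so each piece cut off is a stable genus-zero tree carrying at least two marked points, attached to the rest along a single edge, and any such tree has at least two marked points at one of its vertices (a leaf other than the attaching vertex would otherwise be unstable, and if there is no such leaf the tree is a single vertex bearing all $n'\ge 2$ markings). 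Granting this, the relevant maximal simplex $\sigma$ for a point in the relative interior of $[\tau]\in\weight\cap\rep$ again lies in $\weight\cap\rep$, and each elementary homotopy $\rho_{\sigma,i}$ keeps points inside the closed cell of $\sigma$, which lies in $|\weight\cap\rep|$; hence the whole deformation retraction restricts, exhibiting $\weight\cap\rep$ as deformation retracting onto the point $\bfB(0,[n])\in\weight\cap\rep$. This gives contractibility of $\weight\cap\rep$, and therefore of $\weight\cup\rep$.

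I expect the main obstacle to be the bookkeeping in the restriction step: making precise that the deformation retraction of $\rep$ built from Proposition~\ref{prop:prop}/Corollary~\ref{cor:sequence} genuinely carries $|\weight\cap\rep|$ into itself, which amounts to tracking, for each point, the maximal simplex $\sigma$ used by the glued homotopy and confirming it stays in $\weight\cap\rep$ --- the content of the displayed claim above. A cosmetic alternative is to instead restrict the deformation retraction of $\weight$ onto $(\weight)_{P_{1,0},0}$ from the proof of Theorem~\ref{thm:contractible}\eqref{it:weight-contract}, using that $(1,0)$-bridge uncontractions preserve positive vertex weights and leave the marking function untouched; one then observes separately that, since $n\ge 2$, every graph all of whose edges are $(1,0)$-bridges is a tree with all $n$ markings concentrated at one vertex (the edge on the path between two distinct marked vertices would otherwise have markings on both sides of a $(0,\ast)$-free bridge), hence lies in $\rep$, so $(\weight)_{P_{1,0},0}\subseteq\rep$ is the already-contractible base. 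Either route hinges on the same point: the bridge-expansion operations used in the proof of Theorem~\ref{thm:contractible} simultaneously preserve the properties ``has a positive vertex weight'' and ``has repeated markings.''
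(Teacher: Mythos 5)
Your proof is correct but takes a genuinely different route from the paper. The paper proves the lemma directly, by viewing $X = \weight \cup \rep$ as a symmetric $\Delta$-complex in its own right and applying Corollary~\ref{cor:sequence} to the sequence $P_{0,n},\ldots,P_{0,2},P_{1,0}$ of properties on $X$; the conclusion is a strong deformation retraction of $X$ onto the single point $\mathbf{B}(g,0)$. You instead reduce the problem to the intersection $\weight\cap\rep$ via the gluing lemma (using that subcomplex inclusions are cofibrations so the pushout square is a homotopy pushout), and then establish contractibility of the intersection by arguing that the deformation retraction of $\rep$, built in the proof of Theorem~\ref{thm:contractible}\eqref{it:rep-contract}, restricts to $\weight\cap\rep$. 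Both arguments ultimately hinge on the same combinatorial fact --- that expanding a vertex into a $(0,n')$-bridge or a $(1,0)$-bridge never destroys positive vertex weight or repeated markings --- the paper needs it to check that the hypotheses of Corollary~\ref{cor:sequence} hold on the subcomplex $X$ (so that maximal co-$Q$ expansions of simplices of $X$ remain in $X$), while you need it to see that each elementary homotopy $\rho_{\sigma,i}$ carries $|\weight\cap\rep|$ into itself, which, as you correctly note, comes down to the observation that the relevant maximal simplex $\sigma\in T_i$ is co-$Q$-related to the supporting simplex of any point that actually moves, hence lies in $\weight\cap\rep$. Your route buys you the ability to reuse the already-proved contractibility of $\weight$ and $\rep$ rather than rerunning Theorem~\ref{thm:bridges} on the smaller complex, at the modest cost of invoking the gluing lemma and a restriction lemma; the second variant you sketch --- restricting the one-step retraction of $\weight$ onto $(\weight)_{P_{1,0},0}$ and observing that for $n\ge2$ this target already lies in $\rep$ --- is the cleanest incarnation of that idea.
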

\begin{proof}
Regard $X=\Delta_{g,n}^{\text{w}}\cup  \Delta_{g,n}^{\text{rep}}$ as a symmetric $\Delta$-complex. In the above notation, the properties
$$P_{0,n},\ldots,P_{0,2},P_{1,0}$$
satisfy the hypotheses of Corollary~\ref{cor:sequence}.  The conclusion is that $X$ admits a strong deformation retract to the point in $X$ corresponding to the $1$-edge graph $\mathbf{B}(g,0)$. 
\end{proof}

\section{Calculations on $\Dgn$}
\label{sec:proofs}

In \S\ref{sec:g=1}, we apply the contractibility of $\Delta^\mathrm{rep}_{g,n}$ to  calculate the $S_n$-equivariant homotopy type of $\Delta_{1,n}$ and prove Theorem~\ref{thm:boundary} from the introduction.  In \S\ref{sec:proof-split} we prove Theorem~\ref{thm:split}.  In \S\ref{sec:transfer} we construct a transfer homomorphism from the cellular chain complex of $\Delta_g$ to that of $\Delta_{g,1}$.  Calculations of the rational homology of $\Delta_{g,n}$ in a range of cases for $g\ge 2$ are in Appendix~\ref{sec:compute}.

\subsection{The case $g=1$}\label{sec:g=1}

We  now restate and prove Theorem~\ref{thm:boundary}, showing that contracting $\Delta^{\mathrm{rep}}_{1,n}$ produces a bouquet of $(n-1)!/2$ spheres indexed by cyclic orderings of the set $\{1,\ldots,n\}$, up to order reversal, and then computing the representation of $S_n$ on the reduced homology of this bouquet of spheres induced by permuting the marked points.

\boundary*

\begin{proof} Recall that the core of a weighted, marked graph is the smallest connected subgraph containing all cycles and all vertices of positive weight.  The core of a genus $1$ tropical curve is either a single vertex of weight 1 or a cycle.  If $\Gamma \in \Dn \smallsetminus \Delta^{\mathrm{rep}}_{1,n}$ then the core of $\Gamma$ cannot be a vertex of weight 1, since then the underlying graph would be a tree, whose leaves would support repeated markings. Therefore the core of $\Gamma$ is a cycle with all vertices of weight zero. Since $\Gamma \not\in \Delta^{\mathrm{rep}}_{1,n}$, each vertex supports at most one marked point.  The stability condition then ensures that each vertex supports exactly one marked point.  In other words, the combinatorial types of tropical curves that appear outside the repeated marking locus consist of an $n$-cycle with the markings $\{1,\ldots,n\}$ appearing around that cycle in a specified order.  There are $(n-1)!/2$ possible orders $\tau$ of $\{1,\ldots,n\}$ up to symmetry, so we have $(n-1)!/2$ such combinatorial types $\Gmw_\tau.$

For $n \geq 3 $, each $\Gmw_\tau$ has no  nontrivial automorphisms, so the image of the interior of $\sigma^1(\Gmw_\tau)$ in $\Dn$ is an $(n-1)$-disc whose boundary is in $\Delta^{\mathrm{rep}}_{1,n}$.  Now it follows from Theorem~\ref{thm:contractible} that $\Dn$ has the homotopy type of a wedge of $(n-1)!/2$ spheres of dimension $n-1$.

It remains to identify the representation of $S_n$ on
$$V = H_{n-1}(\Dn/\Delta_{1,n}^{\mathrm{rep}}; \Q)$$
obtained by permuting the marked points.  We have already shown that $V$ has a basis given by the homology classes of the $(n-1)$-spheres in the wedge $\Dn/\Delta_{1,n}^{\mathrm{rep}}$, which are in bijection with the $(n-1)!/2$ unoriented cyclic orderings of $\{1,\ldots,n\}$.
Let $\phi\colon D_n \rightarrow S_n$ be the embedding of the dihedral group as a subgroup of the permutations of the vertices $\{1,\ldots,n\}$ of an $n$-cycle. Choose left coset representatives $\sigma_1,\ldots,\sigma_k$, where $k={(n-1)!/2}$, and write $[\sigma_i]$ for the corresponding basis elements of $V$.  For any $\pi \in S_n$, we have $\pi \sigma_i = \sigma_j \pi'$ for some $\pi'\in D_n$. Then $\pi\cdot[\sigma_i] = \pm [\sigma_j]$, where the sign depends exactly on the sign of the permutation on the {\em edges} of the $n$-cycle induced by $\pi'$. This is because the ordering of the edges determines the orientation of the corresponding sphere in $\Dn/\Delta_{1,n}^{\mathrm{rep}}$.  Therefore the representation of $S_n$ on $V$ is exactly $\mathrm{Ind}_{D_n,\phi}^{S_n}\mathrm{Res}_{D_n,\psi}^{S_n} \mathrm{sgn}$, where the restriction is according to the embedding of $\psi\colon D_n\rightarrow S_n$ into the group of permutations of edges of the $n$-cycle.
\end{proof}

\begin{remark}\label{rem:11-12-contractible}
We remark that $\Delta_{1,1}$ and $\Delta_{1,2}$ are contractible.  Indeed,  $\Delta_{1,1}$ is a point. And the unique cell of $\Delta_{1,2}$ not in $\Delta^{\mathrm{rep}}_{1,2}$ consists of two vertices and two edges between them.  Exchanging the edges gives a nontrivial $\Z/2\Z$ automorphism on this cell, which then retracts to $\Delta^{\mathrm{rep}}_{1,2}$.  So $\Delta_{1,2}$ is contractible by Theorem~\ref{thm:contractible}.  See Figure~\ref{fig:delta12}\end{remark}

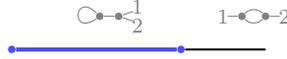
\begin{figure}[h!]
\begin{tikzpicture}[my_node/.style={fill, circle, inner sep=1pt, gray}, scale=.9]
\node[my_node, blue!70] (L) at (-2,0){};
\node[my_node, blue!70] (M) at (0.5,0){};
\draw[ultra thick, blue!70] (M) -- (L);
\draw[thick, black] (M) -- (1.75, 0);
\begin{scope}[scale=0.7, shift={(0,.2)}]
\node[my_node] (V2) at (-1,.5){};
\node[my_node] (V3) at (-.6,.5){};
\draw[gray] (V2) to [out=215, in = 135, looseness=15] (V2);
\draw[gray] (V2) to (V3);
\draw[gray] (V3) to (-.3,.6);
\draw[gray] (V3) to (-.3,.4);
\node[gray] at (-.2,.7){$\scriptstyle 1$};
\node[gray] at (-.2,.3){$\scriptstyle 2$};
\end{scope}
\begin{scope}[scale=0.7, shift = {(2,.7)}]
\node[my_node] (W1) at (0,0){};
\node[my_node] (W2) at (0.5,0){};
\draw[gray] (W1) to [bend right=45] (W2);
\draw[gray] (W1) to [bend left=45] (W2);
\draw[gray] (W1) to (-.3,0);
\draw[gray] (W2) to (.8,0);
\node[gray] at (-.4,0){$\scriptstyle 1$};
\node[gray]  at (.9,0){$\scriptstyle 2$};
\end{scope}
\end{tikzpicture}
\caption{$\Delta_{1,2}$, shown with two maximal symmetric orbits of simplices, retracts onto the subcomplex $\Delta_{1,2}^\mathrm{rep}$, shown in blue.} 
\label{fig:delta12}
\end{figure}

\subsection{Proof of Theorem~\ref{thm:split}}\label{sec:proof-split}

The results of the previous section allow us to prove Theorem~\ref{thm:split} from the introduction, restated below.  

\split*

\begin{proof}
Consider the cellular chain complex $ C_*(\Delta_{g,n},\Q)$. It is generated in degree $p$ by $[\Gmw,\omega]$ where $\Gmw\in \J_{g,n}$ and $\omega\colon E(\Gmw)\to [p]=\{0,1,\ldots,p\}$ is a bijection.  These generators are subject to the relations $[\Gmw,\omega]=\rm{sgn}(\sigma)[\Gmw',\omega']$ if there is an isomorphism $\Gmw\to\Gmw'$ inducing the permutation $\sigma$ of the set $[p]$.

Let $B^{(g,n)}$ be the subcomplex of $ C_*(\Delta_{g,n},\Q)$ spanned by the generators $[\Gmw,\omega]$ with at least one nonzero vertex weight.  Note that $B^{(g,n)}$ is in fact a subcomplex, since one-edge contractions of graphs with positive vertex weights have positive vertex weights. 

Define $A^{(g,n)}$ by the short exact sequence
$$0\to B^{(g,n)} \to  C_*(\Delta_{g,n},\Q) \to A^{(g,n)} \to 0.$$
Then $A^{(g,n)}$ is isomorphic to the marked graph complex $G^{(g,n)}$, up to shifting degrees by $2g-1$: a graph with $e$ edges is in degree $e-1$ in $A^{(g,n)}$ and in degree $e-2g$ in $G^{(g,n)}$.
And $B^{(g,n)}$ is the cellular chain complex associated to $\weight$, which is contractible whenever it is nonempty by Theorem~\ref{thm:contractible}.  Therefore, when $\weight$ is nonempty then $B^{(g,n)}$  is an acyclic complex, and the theorem follows.
\end{proof}

\begin{remark}
  The $n=0$ case is proved in \cite[\S4]{CGP1-JAMS}. The proof here is analogous, but carried out on the level of spaces.
\end{remark}

The proof of Theorem~\ref{thm:split} relied on the contractibility of $\weight$.  Using other natural contractible subcomplexes in place of $\weight$ would produce analogous results.  We pause to record a particular version which will be useful for  applications in \cite{cgp-chi}.

Let $K^{(g,n)}$ denote the following variant the marked graph complex $\Ggn$ from \S\ref{ssec:Ggn}.  As a graded vector space, it has generators $[\Gamma,\omega, m]$ for each connected graph $\Gamma$ of genus $g$ (Euler characteristic $1-g$) with or without loops, equipped with a total order $\omega$ on its set of edges and an {\em injective} marking function $m \colon \{ 1, \ldots, n \} \rightarrow V(G)$, such that the valence of each vertex plus the size of its preimage under $m$ is at least 3.  These generators are subject to the relations $$[\Gamma,\omega, m] = \mathrm{sgn}(\sigma) [\Gamma',\omega', m']$$ if there exists an isomorphism of graphs $\Gamma \cong \Gamma'$ that identifies $m$ with $m'$, and under which the edge orderings $\omega$ and $\omega'$ are related by the permutation $\sigma$.  The homological degree of $[\Gamma,\omega, m]$ is $e-2g$.  The differential on $K^{(g,n)}$ of $[\Gamma,\omega,m]$ is defined as before~\eqref{eq:the-differential}, with the added convention that if $e_i$ is a loop edge of $\Gamma$ then we interpret $[\Gamma/e, \omega|_{E(\Gamma)\smallsetminus\{e\}},\pi_i\circ m]$ as $0$.

\begin{proposition}
Fix $g>0$ and $n\ge 0$ with $2g-2+n>0$, excluding $(g,n)=(1,1)$.  For all $k$, we have isomorphisms on homology $$H_k (K^{(g,n)}) \xrightarrow{\cong} \widetilde H_{k+2g-1}(\Dgn;\Q).$$
\end{proposition}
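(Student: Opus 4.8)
The plan is to mimic the proof of Theorem~\ref{thm:split}, replacing the contractible subcomplex $\weight$ by the larger contractible subcomplex $\weight\cup\rep$ provided by Lemma~\ref{lem:x-contractible}. Inside the cellular chain complex $C_*(\Dgn;\Q)$ --- which in degree $p$ is generated by pairs $[\Gmw,\omega]$ with $\Gmw\in\J_{g,n}$ and $\omega\colon E(\Gmw)\xrightarrow{\sim}[p]$, modulo the usual sign relations --- let $B^{(g,n)}$ be the span of those generators $[\Gmw,\omega]$ for which $\Gmw$ has a vertex of positive weight \emph{or} a non-injective marking function.

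First I would check that $B^{(g,n)}$ is a subcomplex: a one-edge contraction $\Gmw\to\Gmw/e$ can only increase vertex weights, and if $m$ is non-injective then $\pi\circ m$ is non-injective for the induced surjection $\pi$ of vertex sets, so the defining condition is preserved by each $(d_i)_*$. Under the identification of Example~\ref{ex:Dgn}, $B^{(g,n)}$ is exactly the cellular chain complex of the subcomplex $\weight\cup\rep\subseteq\Dgn$. Since $g>0$ and $(g,n)\ne(1,1)$, Lemma~\ref{lem:x-contractible} shows $\weight\cup\rep$ is contractible, hence $B^{(g,n)}$ is acyclic. Defining $A^{(g,n)}$ by the short exact sequence
\[
0\to B^{(g,n)}\to C_*(\Dgn;\Q)\to A^{(g,n)}\to 0,
\]
the long exact sequence in homology gives $H_*(A^{(g,n)})\xrightarrow{\cong}H_*(C_*(\Dgn;\Q))=\widetilde H_*(\Dgn;\Q)$.

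The last step is to identify $A^{(g,n)}$ with $K^{(g,n)}$ up to a shift of degrees by $2g-1$. A generator $[\Gmw,\omega]$ survives in the quotient $A^{(g,n)}$ precisely when all vertex weights of $\Gmw$ vanish and $m$ is injective; the datum of such a $\Gmw$ is exactly that of a connected genus-$g$ graph $\Gamma$ with an injective marking and a total order on $E(\Gamma)$ for which, at each vertex, the number of incident half-edges plus the number of markings is at least three --- i.e.\ a generator of $K^{(g,n)}$. A graph with $e$ edges lies in degree $e-1$ in $A^{(g,n)}$ and in degree $e-2g$ in $K^{(g,n)}$, so the two chain complexes agree after shifting by $2g-1$; and the differential induced on $A^{(g,n)}$ from $\sum(-1)^i(d_i)_*$ sends a term $[\Gmw/e_i,\dots]$ to zero exactly when $\Gmw/e_i$ acquires positive weight (that is, when $e_i$ is a loop) or a non-injective marking (when both ends of $e_i$ are marked), which matches the conventions in the differential of $K^{(g,n)}$. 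Combining the two displays yields $H_k(K^{(g,n)})\cong\widetilde H_{k+2g-1}(\Dgn;\Q)$. I do not expect a serious obstacle; the only points needing care are that $B^{(g,n)}$ is genuinely a subcomplex equal to $C_*(\weight\cup\rep)$ so that Lemma~\ref{lem:x-contractible} applies, and that the induced differential on the quotient reproduces precisely the stated differential on $K^{(g,n)}$.
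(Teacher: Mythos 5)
Your proof is correct and follows essentially the same route as the paper: the paper identifies $K^{(g,n)}$ (up to the degree shift $2g-1$) with the relative cellular chain complex $C_*(\Dgn,\weight\cup\rep;\Q)$, which by the paper's definition is precisely your quotient $A^{(g,n)}$, and then invokes Lemma~\ref{lem:x-contractible} exactly as you do. Your version merely spells out the subcomplex/quotient bookkeeping that the paper leaves implicit in the notation $C_*(Y,X)$.
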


\begin{proof}
The complex $K^{(g,n)}$ is isomorphic, after shifting degrees by $2g-1$, to the relative cellular chain complex
$ C_*(\Dgn,\Dgn^{\mathrm{w}}\cup \rep;\Q)$ of the pair of symmetric $\Delta$-complexes $\Dgn^{\mathrm{w}}\cup \rep \subset \Dgn$, as defined in \S\ref{sec:relative-homology}.  But $\Dgn^{\mathrm{w}}\cup \rep$ is contractible by Lemma~\ref{lem:x-contractible}, so we have identifications
\begin{equation*}
  H_k (K^{(g,n)}) \cong H_{k+2g-1}( C_*(\Delta_{g,n}, \Dgn^{\mathrm{w}}\cup \rep; \Q)) \cong \widetilde H_{k+2g-1} (\Delta_{g,n};\Q).
  \qedhere
\end{equation*}
\end{proof}

\subsection{A cellular transfer map}  \label{sec:transfer}
In \cite{CGP1-JAMS}, we showed that $\bigoplus_g H_*(\Delta_g;\Q)$ is large and has a rich structure; its dual contains the Grothendieck-Teichm\"uller Lie algebra $\mathfrak{grt}_1$, and $\dim_\Q H_{*}(\Delta_g;\Q)$ grows at least exponentially with $g$.  Here we restate and prove Theorem~\ref{thm:transfer}, showing that nontrivial homology classes on $\Delta_g$ give rise to nontrivial classes with a marked point.  

\transfer*

\begin{proof}
We begin by defining the map $t \colon   C_*(\Dg;\Q) \rightarrow  C_*(\Delta_{g,1};\Q)$.  For each vertex $v$ in a stable, vertex weighted graph $\Gmw \in \J_g$, let $\chi(v) = 2 w(v) - 2 + \val(v)$.  Note that, for a vertex weighted graph $\Gmw$ of genus $g$, we have $\sum_{v \in V(\Gmw)} \chi(v) = 2g-2$.

Now, consider an element $[\Gmw, \omega]$ of $\Delta_g([p])$, i.e., the isomorphism class of a pair $(\Gmw,\omega)$, where $\Gmw$ is a stable graph of genus $g$ and $\omega$ is an ordering of its $p+1$ edges.  For each vertex $v$, let $[\Gmw_v, \omega] \in \Delta_{g,1}([p])$ be the stable marked graph with ordered edges obtained by marking the vertex $v$.  The linear map $\Q \Delta_g([p]) \rightarrow \Q \Delta_{g,1}([p])$ given by
\[
[\Gmw, \omega] \mapsto \sum_v \chi(v) [\Gmw_v, \omega]
\]
commutes with the action of $S_{p+1}$, and hence, after tensoring with $\Q^{\mathrm{sgn}}$ and taking coinvariants, induces a map $t_p \colon   C_p(\Dg;\Q) \rightarrow  C_p(\Delta_{g,1};\Q)$.  

We claim that $t = \bigoplus_p t_p$ commutes with the differentials on $ C_*(\Dg;\Q)$ and $ C_*(\Delta_{g,1};\Q)$.  Recall that each differential is obtained as a signed sum over contractions of edges.  The claim then follows from the observation that, if $v$ is the vertex obtained by contracting an edge with endpoints $v'$ and $v''$, then $\chi(v) = \chi(v') + \chi(v'')$.  This shows that $t$ is a map of chain complexes.  Furthermore, by construction, $t$ maps graphs without loops or vertices of positive weight to marked graphs without loops or vertices of positive weight, and hence takes the subcomplex $G^{(g)}$ into $G^{(g,1)}$.  

It remains to show that these maps of chain complexes induce injections $\widetilde H_k(\Dg;\Q) \hookrightarrow \widetilde H_k(\Delta_{g,1};\Q)$ and $H_k(G^{(g)}) \hookrightarrow H_k(G^{(g,1)})$, for all $k$.  To see this, we construct a map $\pi\colon  C_*(\Delta_{g,1};\Q)\rightarrow  C_*(\Delta_{g};\Q) $ such that $\pi \circ t$ is multiplication by $2g-2$.

Let $\widetilde \pi \colon  \Q \Delta_{g,1}([p]) \rightarrow \Q \Delta_{g}([p])$ be the linear map obtained by forgetting the marked point.  More precisely, if forgetting the marked point on $\Gmw \in \J_{g,1}$ yields a stable graph $\Gmw_0 \in \J_g$, then $\widetilde \pi$ maps $[\Gmw, \omega]$ to $[\Gmw_0, \omega]$.  If forgetting the marked point on $\Gmw$ yields an unstable graph, then $\widetilde \pi$ maps $[\Gmw, \omega]$ to 0.  (Forgetting the marked point yields an unstable graph exactly when the marking is carried by a weight zero vertex incident to exactly two half-edges.)  The resulting linear map $\widetilde \pi$ commutes with the action of $S_{p+1}$, so tensoring with $\Q^{\mathrm{sgn}}$ and taking coinvariants gives $\pi_p  \colon  C_p(\Delta_{g,1};\Q) \rightarrow  C_p(\Dg;\Q)$.  Let $\pi = \bigoplus_p \pi_p$.  One then checks directly that $\pi \circ t$ is multiplication by $2g-2$, and that $\pi$ commutes with the differentials.

The only subtlety to check is as follows. Suppose $[\Gmw,\omega] \in \Delta_{g,1}([p])$ is such that forgetting the marked point results in an unstable graph.  Then the vertex supporting the marked point is incident to exactly two edges $e,e'$.  
Then in the expression
$$\partial [\Gmw,\omega] = \sum_{i=0}^p (-1)^i [\Gmw/e_i, \omega/e_i]$$
in all but exactly two terms $(-1)^i [\Gmw/e_i, \omega/e_i]$, forgetting the marked point results in an unstable graph.  The two exceptional terms correspond to the two edges $e,e'$, and these cancel under $\pi$.
\end{proof}

\begin{corollary}\label{cor:hdg1} We have
$$\dim H_{2g-1}(\Delta_{g,1};\Q) > \beta^g + \text{constant}$$
for any $\beta < \beta_0 \approx 1.32\ldots$, where $\beta_0$ is the real root of $t^3-t-1=0.$
\end{corollary}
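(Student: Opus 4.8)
The plan is to read this bound off directly from the transfer injection of Theorem~\ref{thm:transfer}, combined with the exponential lower bound on $\dim\widetilde H_{2g-1}(\Dg;\Q)$ established in \cite{CGP1-JAMS}. Throughout we take $g\ge 2$, as in Theorem~\ref{thm:transfer}.

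First I would record that reduced and unreduced homology agree in the relevant degree: since $2g-1\ge 3$ and the augmentation $|\Delta_{g,1}|\to\Delta_{g,1}([-1])$ has one-point target (the only edgeless graph in $\J_{g,1}$ is $\bullet_{g,1}$), we have $H_{2g-1}(\Delta_{g,1};\Q)=\widetilde H_{2g-1}(\Delta_{g,1};\Q)$. Then I would apply Theorem~\ref{thm:transfer} in degree $k=2g-1$: the cellular transfer map $t$ induces an injection $\widetilde H_{2g-1}(\Dg;\Q)\hookrightarrow\widetilde H_{2g-1}(\Delta_{g,1};\Q)$, whence
$$\dim H_{2g-1}(\Delta_{g,1};\Q)\ \ge\ \dim\widetilde H_{2g-1}(\Dg;\Q).$$

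It then remains to invoke the main estimate of \cite{CGP1-JAMS}, which gives $\dim\widetilde H_{2g-1}(\Dg;\Q)>\beta^g+\text{constant}$ for every $\beta<\beta_0$, where $\beta_0$ is the real root of $t^3-t-1$. This is the same input underlying the bound $\dim H^{4g-6}(\cM_g;\Q)>1.32^g+\text{constant}$ recalled in the introduction: the exponentially growing part of $H_*(\Dg;\Q)$ sits precisely in degree $2g-1$, equivalently in $H_0(G^{(g)})$, whose graded dual is $\mathfrak{grt}_1$, and a free-Lie-subalgebra count on the generators $\sigma_3,\sigma_5,\ldots$ produces the constant $\beta_0$. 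Concatenating the two displayed inequalities yields the corollary, with the same additive constant as in \cite{CGP1-JAMS}.

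I do not expect any genuine obstacle: all the substantive work lives in Theorem~\ref{thm:transfer} and in \cite{CGP1-JAMS}. The only points deserving a moment of care are the identification of reduced with unreduced homology in positive degree and the bookkeeping that the transfer map is applied in exactly the degree $2g-1$ where the exponential growth occurs. One may also note, as a remark, that combining Corollary~\ref{cor:hdg1} with the comparison isomorphism~\eqref{eq:mgn-comparison} in the case $(g,n)=(g,1)$ recovers Corollary~\ref{cor:mgoneexp}.
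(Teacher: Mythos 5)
Your proposal is correct and follows exactly the same route as the paper: the paper's proof is precisely to cite the $\Delta_g$ bound from \cite{CGP1-JAMS} and then apply the transfer injection of Theorem~\ref{thm:transfer} in degree $2g-1$. Your extra remark identifying reduced with unreduced homology in positive degree is a harmless clarification that the paper leaves implicit.
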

\begin{proof}
  The analogous result for $\Delta_g$ is proved in \cite{CGP1-JAMS}; now combine with Theorem~\ref{thm:transfer}.
\end{proof}

\begin{remark}
  As mentioned in the introduction, the splitting on the level of cohomology was constructed earlier in \cite{TurchinWillwacher17}.  The splitting they construct is induced by Lie bracket with a graph $L$ which is a single edge between two vertices, tracing through definitions, this is (at least up to signs and grading conventions) dual to the restriction of our $t$ to a chain map $G^{(g)} \to G^{(g,1)}$.
\end{remark}

\begin{remark}
  For all $n$, there is a natural map $M_{g,n+1}^\trop \rightarrow M_{g,n}^\trop$ obtained by forgetting the marked point and stabilizing \cite{acp}.  When $n = 0$, the preimage of $\bullet_g$ is $\bullet_{g,1}$, so there is an induced map on the link $\Delta_{g,1} \rightarrow \Dg$.  This continuous map of topological spaces does not come from a map of 
  symmetric $\Delta$-complexes
  (because some cells of $\Delta_{g,1}$ are mapped to cells of lower dimension in $\Dg$), but one can check that the pushforward on rational homology is induced by $\pi$.  When $n > 1$, the preimage of $\bullet_{g,n}$ includes graphs other than $\bullet_{g,n+1}$, and there is no induced map from $\Delta_{g,n+1}$ to $\Dgn$. 
\end{remark}

\section{Applications to, and from, $\cM_{g,n}$} \label{sec:applications}

\subsection{The boundary complex of $\cM_{g,n}$} \label{sec:dual-compl-simple}

We recall that the dual complex $\Delta(D)$ of a normal crossings divisor $D$ in a smooth, separated Deligne--Mumford (DM) stack $X$ is naturally defined as a symmetric $\Delta$-complex \cite[\S5.2]{CGP1-JAMS}.  Over $\mathbb{C}$, for each $p \geq -1$, $\Delta(D)_p$ is the set of equivalence classes of pairs $(x,\sigma)$, where $x$ is a point in a stratum of codimension $p$ in $D$ and $\sigma$ is an ordering of the $p+1$ analytic branches of $D$ that meet at $x$.  The equivalence relation is generated by paths within strata: if there is a path from $x$ to $x'$ within the codimension $p$ stratum and a continuous assignment of orderings of branches along the path, starting at $(x,\sigma)$ and ending at $(x', \sigma')$, then we set $(x, \sigma) \sim (x',\sigma')$.  

This dual complex can equivalently be defined (and, more generally, over fields other than $\C$), using normalization and iterated fiber product.   Let $\EE \to D$ be the normalization of $D$ and write $$\EEE{p} = \EE \times_X \dots \times_X \EE$$ for the $(p+1)$-fold
iterated fiber product.  Define $\FFF{p} \subset \EEE{p}$ as the open subvariety consisting of $(p+1)$-tuples of pairwise distinct points in $\EEE{p}$ that all lie over the same point of $D$.  We can then define $\Delta(D)_p$ to be the set of irreducible components of $\FFF{p}$.
(Note that, over $\C$, a point of $\FFF{p}$ encodes exactly the same data as a point in the codimension $p$ stratum together with an ordering of the $p+1$ analytic branches of $D$ at that point.)   For further details, see \cite[\S5]{CGP1-JAMS}.

If $X$ is proper then the simple homotopy type of this dual complex depends only on the open complement $X \smallsetminus D$ \cite{boundarycx, Harper17}, and its reduced rational homology is naturally identified with the top weight cohomology of $X \smallsetminus D$.  More precisely, if $X$ has pure dimension $d$, then
\begin{equation}\label{eq:topweight}
\widetilde H_{k-1}(\Delta(D); \Q) \cong \Gr_{2d}^W H^{2d-k} (X\smallsetminus D;\Q).
\end{equation}
See \cite[Theorem~5.8]{CGP1-JAMS}.  

\medskip

Most important for our purposes is the special case where $X = \ocM_{g,n}$ is the Deligne--Mumford stable curves compactification of $\cM_{g,n}$ and $D = \ocM_{g,n} \smallsetminus \cM_{g,n}$ is the boundary divisor.

\begin{theorem}\label{cor_dual_complex}
  The dual complex of the boundary divisor in the moduli space of
  stable curves with marked points
  $\Delta(\ocM_{g,n} \smallsetminus \cM_{g,n})$ is $\Dgn$.
\end{theorem}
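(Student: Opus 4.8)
The plan is to identify the combinatorial recipe for the boundary divisor $D = \ocM_{g,n} \smallsetminus \cM_{g,n}$ with the combinatorial recipe for $\Dgn$ developed in \S\ref{sec:graphs}, working through the ``normalization and iterated fiber product'' description of $\Delta(D)_p$ recalled just above. The key point is that the boundary strata of $\ocM_{g,n}$ are indexed, up to the natural automorphisms, by objects of $\J_{g,n}$: a point of $\ocM_{g,n}$ lying in a codimension $p$ stratum is a stable $n$-pointed curve $C$ whose dual graph $\Gmw$ is an object of $\J_{g,n}$ with exactly $p$ edges (each edge recording a node, each vertex recording an irreducible component with its geometric genus as weight, and the $n$ marked points giving the $n$-marking), and the codimension of the stratum equals $|E(\Gmw)|$. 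This is the standard description of the stratification of $\ocM_{g,n}$ by topological type, e.g. \cite[X.2]{ACG11}, \cite[\S3.2]{acp}.

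First I would make precise the correspondence on the level of sets for each fixed $p$. By the recalled description, $\Delta(D)_p$ is the set of irreducible components of $\FFF{p}\subset \EEE{p}$, where $\EE \to D$ is the normalization; concretely an irreducible component of $\FFF{p}$ corresponds to a codimension-$(p+1)$ stratum of $\ocM_{g,n}$ (i.e.\ a stable type $\Gmw$ with $p+1$ edges) together with a choice of ordering of the $p+1$ analytic branches of $D$ meeting a generic point of that stratum. Branches through a point $[C]$ are in canonical bijection with the nodes of $C$, i.e.\ with $E(\Gmw)$; so an element of $\Delta(D)_p$ is precisely a pair $(\Gmw, \tau)$ with $\Gmw \in \J_{g,n}$ having $p+1$ edges and $\tau\colon E(\Gmw)\to[p]$ a bijection, taken up to the action of $\Aut(\Gmw)$ (monodromy around loops in the stratum realizes exactly the automorphisms of the dual graph). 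This matches the set $X_{g,n}([p])$ from Example~\ref{ex:Dgn} verbatim. I would also check the edge cases $p=-1$ (both sides are a single point, since $\ocM_{g,n}$ is connected) and $p=0$ (one-edge stable graphs $=$ irreducible components of $D$).

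Next I would check that the two presheaf structures agree, i.e.\ that for an injection $\iota\colon[p']\hookrightarrow[p]$ the face map $\Delta(D)(\iota)\colon \Delta(D)_p\to\Delta(D)_{p'}$ coincides with $X_{g,n}(\iota)$. Geometrically, restricting along $\iota$ corresponds to specializing to a deeper stratum in which only the branches indexed by $\iota([p'])$ are ``remembered'' as analytic branches at a general point, and the remaining $p-p'$ nodes are smoothed; in dual-graph terms, smoothing a node is exactly contracting the corresponding edge. Hence $\Delta(D)(\iota)$ sends $(\Gmw,\tau)$ to the graph obtained by contracting the edges whose $\tau$-labels are not in $\iota([p'])$, then relabeling by $\iota$ — which is the definition of $X_{g,n}(\iota)$ in Example~\ref{ex:Dgn}. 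Assembling the bijections on each $\Delta(D)_p$ with the compatibility with face maps gives an isomorphism of functors $I^{\mathrm{op}}\to\mathsf{Sets}$, hence an isomorphism of symmetric $\Delta$-complexes $\Delta(D)\cong X_{g,n}$, and therefore $\Delta(D)\cong\Dgn$.

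The main obstacle is not conceptual but a matter of carefully matching the \emph{equivalence relations} on both sides and handling automorphisms and stacky points correctly: on the $\Delta(D)$ side one must know that the monodromy of the family of branch-orderings along loops in a stratum generates precisely $\Aut(\Gmw)$ (no more, no less), which is where one uses that $\ocM_{g,n}$ is a smooth DM stack and the local structure of the boundary is a normal crossings divisor with the expected local model; and one must be careful that $\EEE{p}$ and $\FFF{p}$ are taken as stacks (or that one passes to a suitable atlas) so that ``irreducible components of $\FFF{p}$'' really does biject with $X_{g,n}([p])$ rather than with some finer or coarser set. Once the dictionary ``branch $\leftrightarrow$ node $\leftrightarrow$ edge'' and ``smoothing $\leftrightarrow$ edge contraction'' is set up with this care, everything else is bookkeeping; alternatively, one can cite that this identification (in the generalized cone complex language) is exactly \cite[Theorem~1.2.1 and \S4]{acp} and that the ``boundary complex'' of \cite{CGP1-JAMS} is the symmetric $\Delta$-complex underlying that generalized cone complex, so the theorem follows by combining \cite{acp} with the translation between generalized cone complexes and symmetric $\Delta$-complexes recorded in \cite[\S5]{CGP1-JAMS}.
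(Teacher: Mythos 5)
Your proposal is correct and follows essentially the same route as the paper: the paper's proof is simply a citation to \cite{acp} (plus the translation between generalized cone complexes and symmetric $\Delta$-complexes from \cite[\S5]{CGP1-JAMS}), which is exactly the alternative you offer in your final paragraph, and the body of your argument is a faithful unpacking of that cited identification (strata $\leftrightarrow$ dual graphs, branches $\leftrightarrow$ nodes $\leftrightarrow$ edges, monodromy $\leftrightarrow$ $\Aut(\Gmw)$, smoothing $\leftrightarrow$ edge contraction). You correctly flag the only delicate point, namely that the equivalence relation on branch-orderings is generated by precisely $\Aut(\Gmw)$, which is the substance of the result in \cite{acp}.
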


\begin{proof}
Modulo the translation between smooth generalized cone complexes and symmetric $\Delta$-complexes, this is one of the main results of \cite{acp}, to which we refer for a thorough treatment.  The details of the construction for $n = 0$ are also explained in \cite[Corollaries~5.6 and 5.7]{CGP1-JAMS}, and the general case is similar.
\end{proof}

As an immediate consequence of Theorem~\ref{cor_dual_complex} and \eqref{eq:topweight}, the reduced rational homology of $\Dgn$ agrees with the top weight cohomology of $\cM_{g,n}$. 

\begin{corollary} \label{cor:mgn-top-wt}
There is a natural isomorphism
\[
\Gr_{6g-6+2n}^W H^{6g-6+2n-k} (\cM_{g,n}; \Q) \xrightarrow{\sim} \widetilde{H}_{k-1}(\Dgn;\Q) ,
\]
identifying the reduced rational homology of $\Dgn$ with the top graded piece of the weight filtration on the cohomology of $\cM_{g,n}$.
\end{corollary}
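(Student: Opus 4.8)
The plan is to combine the two ingredients that have just been assembled: the identification of $\Dgn$ with the dual complex of the boundary of $\ocM_{g,n}$ (Theorem~\ref{cor_dual_complex}) and the general comparison~\eqref{eq:topweight} between the reduced rational homology of a boundary complex and the top-weight cohomology of the open complement. First I would recall that $\ocM_{g,n}$ is a smooth, separated, proper Deligne--Mumford stack of pure dimension $d = 3g-3+n$, and that the boundary divisor $D = \ocM_{g,n}\smallsetminus\cM_{g,n}$ has normal crossings; this is classical. It is worth emphasizing that one must use the symmetric $\Delta$-complex model of $\Delta(D)$ built from the normalization $\widetilde D \to D$ and its iterated fiber products, rather than a naive simplicial model, because the local analytic branches of $D$ can be permuted by monodromy and the irreducible components of $D$ can meet themselves; this is exactly the subtlety that the symmetric $\Delta$-complex formalism of \cite[\S3, \S5]{CGP1-JAMS} is designed to handle, and it is also why Theorem~\ref{cor_dual_complex} is phrased as an isomorphism of symmetric $\Delta$-complexes.

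Next I would specialize~\eqref{eq:topweight}, i.e.\ \cite[Theorem~5.8]{CGP1-JAMS}, to $X = \ocM_{g,n}$ and $D$ its boundary. Since $2d = 6g-6+2n$ and $X\smallsetminus D = \cM_{g,n}$, this yields, for every $k$, a natural isomorphism
\[
\widetilde H_{k-1}\bigl(\Delta(\ocM_{g,n}\smallsetminus\cM_{g,n});\Q\bigr) \xrightarrow{\sim} \Gr_{6g-6+2n}^W H^{6g-6+2n-k}(\cM_{g,n};\Q).
\]
Then I would substitute the identification $\Delta(\ocM_{g,n}\smallsetminus\cM_{g,n}) = \Dgn$ from Theorem~\ref{cor_dual_complex} into the left-hand side and invert, obtaining the asserted isomorphism $\Gr_{6g-6+2n}^W H^{6g-6+2n-k}(\cM_{g,n};\Q)\xrightarrow{\sim}\widetilde H_{k-1}(\Dgn;\Q)$.

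I do not expect a genuine obstacle, since both inputs are already in hand --- Theorem~\ref{cor_dual_complex} above and \cite[Theorem~5.8]{CGP1-JAMS} in the prequel. The only points deserving a sentence apiece are: (a) checking that \cite[Theorem~5.8]{CGP1-JAMS} is genuinely stated at the level of smooth separated DM stacks, so that no reduction away from the stack $\ocM_{g,n}$ is required; and (b) noting that the composite isomorphism is natural in the relevant sense --- in particular $S_n$-equivariant for the action permuting marked points, because the isomorphism of Theorem~\ref{cor_dual_complex} is $S_n$-equivariant and the comparison~\eqref{eq:topweight} is functorial in $(X,D)$. This $S_n$-equivariance is precisely what is later used to transport the computation of Theorem~\ref{thm:boundary} to Corollary~\ref{thm:genus1}.
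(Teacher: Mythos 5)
Your proposal is exactly the paper's argument: the corollary is stated there as an immediate consequence of Theorem~\ref{cor_dual_complex} and the comparison isomorphism~\eqref{eq:topweight} from \cite[Theorem~5.8]{CGP1-JAMS}, applied to the smooth proper DM stack $\ocM_{g,n}$ of dimension $d=3g-3+n$ with its normal crossings boundary. Your additional remarks on the necessity of the symmetric $\Delta$-complex formalism and on $S_n$-equivariance are correct observations but are not needed to establish the corollary as stated.
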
  

In the case $g=1$, we have a complete understanding of the rational homology of $\Delta_{1,n}$, from Theorem~\ref{thm:boundary}. Thus we immediately deduce a similarly complete understanding of the top weight cohomology of $\cM_{1,n}$, stated as Corollary~\ref{thm:genus1} in the introduction.

\begin{corollary*}  
The top weight cohomology of $\cM_{1,n}$ is supported in degree $n$, with rank $(n-1)!/2$, for $n \geq 3$.  Moreover, the representation of $S_n$ on $\Gr_{2n}^W H_{n}(\cM_{1,n}; \Q)$ induced by permuting marked points is
\[
\mathrm{Ind}_{D_n,\phi}^{S_n} \, \mathrm{Res}^{S_n}_{D_n,\psi} \, \mathrm{sgn}.
\]
\end{corollary*}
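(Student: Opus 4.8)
The plan is to combine the comparison isomorphism of Corollary~\ref{cor:mgn-top-wt} with the computation of the $S_n$-equivariant homotopy type of $\Delta_{1,n}$ from Theorem~\ref{thm:boundary}. Specialising Corollary~\ref{cor:mgn-top-wt} to $g=1$, the relevant complex moduli space has dimension $d = 3g-3+n = n$, so $2d = 2n$, and the isomorphism reads
\[
\Gr_{2n}^W H^{2n-k}(\cM_{1,n};\Q) \xrightarrow{\sim} \widetilde H_{k-1}(\Delta_{1,n};\Q).
\]
By Theorem~\ref{thm:boundary}, for $n\ge 3$ the space $\Delta_{1,n}$ is homotopy equivalent to a wedge of $(n-1)!/2$ spheres of dimension $n-1$, so $\widetilde H_{k-1}(\Delta_{1,n};\Q)$ is nonzero precisely when $k-1 = n-1$, i.e.\ $k=n$, in which degree it has rank $(n-1)!/2$. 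Plugging $k=n$ into the displayed isomorphism gives $\Gr_{2n}^W H^{n}(\cM_{1,n};\Q) \cong \widetilde H_{n-1}(\Delta_{1,n};\Q)$, which is supported in cohomological degree $n$ with rank $(n-1)!/2$, and vanishes in all other degrees. This establishes the first sentence of the corollary.

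For the $S_n$-equivariant statement, I would note that the isomorphism in Corollary~\ref{cor:mgn-top-wt} is natural, hence $S_n$-equivariant: the symmetric group acts on $\cM_{1,n}$ and on $\ocM_{1,n}$ by permuting the marked points, compatibly with the stratification of the boundary divisor, so the induced action on the dual complex $\Delta(\ocM_{1,n}\smallsetminus \cM_{1,n}) = \Delta_{1,n}$ agrees with the action by permuting markings used in Theorem~\ref{thm:boundary}, and the weight-filtration comparison map is functorial for automorphisms of the pair $(\ocM_{1,n}, D)$. Therefore the $S_n$-representation on $\Gr_{2n}^W H^{n}(\cM_{1,n};\Q)$ is isomorphic (possibly up to a sign twist that I should check) to the one on $\widetilde H_{n-1}(\Delta_{1,n};\Q)$, which by Theorem~\ref{thm:boundary} is $\mathrm{Ind}_{D_n,\phi}^{S_n}\,\mathrm{Res}^{S_n}_{D_n,\psi}\,\mathrm{sgn}$.

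The point requiring the most care is the equivariance of the comparison isomorphism, and in particular whether it introduces an extra sign character. The isomorphism \eqref{eq:topweight} arises from a weight spectral sequence built out of the iterated fiber products $\FFF{p}$ of the normalization of $D$; an automorphism of $(X,D)$ acts on each $\FFF{p}$ and hence on the spectral sequence, and one must confirm that on the relevant $E_1$ (or $E_2$) page this action matches, degree by degree and with correct signs, the action on cellular chains $C_*(\Delta(D);\Q)$ of the dual complex — where the sign character $\Q^{\mathrm{sgn}}$ already appears in the definition $C_p(X) = (\Q^{\mathrm{sgn}}\otimes_\Q \Q X_p)_{S_{p+1}}$. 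Since that sign twist is built symmetrically into both sides of the comparison, no net extra twist appears, but this should be stated explicitly by invoking the naturality assertions of \cite[\S5]{CGP1-JAMS} and Theorem~\ref{cor_dual_complex}. Once that is in hand, the corollary follows immediately by transporting Theorem~\ref{thm:boundary} across the isomorphism.
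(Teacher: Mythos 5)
Your proposal is correct and takes essentially the same route as the paper: specialize Corollary~\ref{cor:mgn-top-wt} (the boundary-complex / top-weight comparison) to $g=1$, feed in the wedge-of-spheres description from Theorem~\ref{thm:boundary}, and note $S_n$-equivariance of the comparison. One small caution on your sign discussion: the twist $\Q^{\mathrm{sgn}}$ in the definition $C_p(X) = (\Q^{\mathrm{sgn}}\otimes\Q X_p)_{S_{p+1}}$ concerns the internal $S_{p+1}$-action permuting branch labels of a stratum, not the external $S_n$-action permuting marked points, so the two should not be conflated; what actually guarantees equivariance is that the isomorphism of \cite[Theorem~5.8]{CGP1-JAMS} is natural in the pair $(X,D)$, and the $S_n$-action arises from automorphisms of $(\ocM_{1,n}, \partial\ocM_{1,n})$, which is exactly the naturality you invoke at the end — and is all the paper itself relies on.
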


\begin{remark}\label{rem:M1n-stuff}
The fact that the top weight cohomology of $\cM_{1,n}$ is supported in degree $n$ can also be seen without tropical methods, as follows.  The rational cohomology of a smooth Deligne--Mumford stack agrees with that of its coarse moduli space, and the coarse space $M_{1,n}$ is affine.  To see this, note that $M_{1,1}$ is affine, and the forgetful map $M_{g,n+1} \rightarrow M_{g,n}$ is an affine morphism for $n \geq 1$.  It follows that $M_{1,n}$ has the homotopy type of an $n$-dimensional CW-complex, by \cite{AndreottiFrankel59, Karcjauskas77}, and hence $H^*(\cM_{1,n}; \Q)$ is supported in degrees less than or equal to $n$. The weights on $H^k$ are always between $0$ and $2k$, so the top weight $2n$ can appear only in degree $n$.  
\end{remark}

\begin{remark}\label{rem:more-M1n-stuff}
  Getzler has calculated an expression for the $S_n$-equivariant Serre characteristic of $\cM_{1,n}$ \cite[(5.6)]{Getzler99}.  Since the top weight cohomology is supported in a single degree, it is determined as a representation by this equivariant Serre characteristic.  We do not know how to deduce Corollary~\ref{thm:genus1} directly from Getzler's formula.  However, C.~Faber has shown a formula for $\Gr_{2n}^W H_n(\cM_{1,n};\Q)$, as an $S_n$-representation, that is derived from \cite[Theorem 2.5]{Getzler98}.  See \cite[Theorem 1.5]{cgp-chi}.
\end{remark}

\begin{remark} \label{rem:even-more-M1n-stuff}
Petersen explains that it is possible to adapt the methods from \cite{Petersen14} to recover the fact that the top weight cohomology of $\cM_{1,n}$ has rank $(n-1)!/2$, using the Leray spectral sequence for $\cM_{1,n} \rightarrow \cM_{1,1}$ and the Eichler--Shimura isomorphism \cite{Petersen15}.
\end{remark}

Using Corollary~\ref{cor:mgn-top-wt} and the transfer homomorphism from \S\ref{sec:transfer}, we also deduce an exponential growth result for top-weight cohomology of $\cM_{g,1}$, stated as Corollary~\ref{cor:mgoneexp} in the introduction.

\mgoneexp*

\begin{proof}
This follows from Corollary~\ref{cor:hdg1} and Corollary~\ref{cor:mgn-top-wt}.
\end{proof}

\begin{remark}\label{rem:mg1} 
Corollary~\ref{cor:mgoneexp} above, and the existence of a natural injection $H_k(\Delta_{g};\Q) \rightarrow H_k(\Delta_{g,1};\Q)$, 
may also be deduced purely algebro-geometrically.  Indeed, pulling back along the forgetful map $\cM_{g,1} \rightarrow \cM_g$ and composing with cup product with the Euler class is injective on rational singular cohomology.  This is because further composing with the Gysin map (proper push-forward) induces multiplication by $2g-2$ on $H^*(\cM_g;\Q)$.  Furthermore, this injection maps top weight cohomology into top weight cohomology, because cup product with the Euler class increases weight by 2.  Identifying top weight cohomology of $\cM_{g,n}$ with rational homology of $\Delta_{g,n}$ for $n = 0$ and $n = 1$ then gives a natural injection $H_k(\Delta_{g};\Q) \rightarrow H_k(\Delta_{g,1};\Q)$, as claimed.  Presumably, this map agrees with the one defined in \S\ref{sec:transfer} up to some normalization constant.
\end{remark}

The following is a strengthening of Corollary~\ref{cor:zero-section-complement}.
\begin{corollary}\label{cor:1.9-strengthened}
  Let $\mathrm{Mod}_g^1$ denote the mapping class group of a connected oriented 2-manifold of genus $g$, with one marked point, and let $G$ be any group fitting into an extension
  \begin{equation*}
    \Z \to G \to \mathrm{Mod}_g^1,
  \end{equation*}
  Then
  \begin{equation*}
    \dim H^{4g-3}(G;\Q) > \beta^g+\text{constant}
  \end{equation*}
  for any $\beta < \beta_0 \approx 1.32\ldots$, where $\beta_0$ is the real root of $t^3-t-1=0$.

  In particular, $G = \mathrm{Mod}_{g,1}$, the mapping class group with one parametrized boundary component, satisfies this dimension bound on its cohomology.
\end{corollary}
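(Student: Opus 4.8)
The plan is to deduce the bound from a Gysin sequence for an $S^1$-fibration over $\cM_{g,1}$, fed by Corollary~\ref{cor:mgoneexp} (equivalently, Corollary~\ref{cor:hdg1} together with Corollary~\ref{cor:mgn-top-wt}). Since the asserted inequality only carries content for large $g$, I would first dispose of the finitely many small genera by absorbing them into the additive constant, and assume $g\ge 3$ from now on. The key preliminary observation is that for $g\ge 3$ the group $\mathrm{Mod}_g^1$ has trivial integral abelianization: this follows from the Birman exact sequence $1\to\pi_1(\Sigma_g)\to\mathrm{Mod}_g^1\to\mathrm{Mod}_g\to 1$, the vanishing of the coinvariants of the symplectic $\mathrm{Mod}_g$-action on $H_1(\Sigma_g;\Z)$, and the perfectness of $\mathrm{Mod}_g$ for $g\ge 3$. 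In particular $\mathrm{Hom}(\mathrm{Mod}_g^1,\Z/2\Z)=0$, so the conjugation action of $\mathrm{Mod}_g^1$ on the normal subgroup $\Z$ in any extension $\Z\to G\to\mathrm{Mod}_g^1$ is trivial; that is, every such extension is automatically central.

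Next I would pass to classifying spaces. A central extension $1\to\Z\to G\to\mathrm{Mod}_g^1\to 1$ gives a principal $S^1=B\Z$-bundle $BG\to B\mathrm{Mod}_g^1$, whose Euler class is the class $e\in H^2(\mathrm{Mod}_g^1;\Z)$ of the extension, and whose rational Gysin sequence reads
\[
\cdots\to H^{j-2}(\mathrm{Mod}_g^1;\Q)\xrightarrow{\,\cup e\,}H^{j}(\mathrm{Mod}_g^1;\Q)\to H^{j}(G;\Q)\to H^{j-1}(\mathrm{Mod}_g^1;\Q)\xrightarrow{\,\cup e\,}H^{j+1}(\mathrm{Mod}_g^1;\Q)\to\cdots.
\]
Now $\mathrm{Mod}_g^1$ acts properly with finite stabilizers on contractible Teichm\"uller space, so $H^*(\mathrm{Mod}_g^1;\Q)\cong H^*(\cM_{g,1};\Q)$, and by Harer's computation of the virtual cohomological dimension, $H^{j}(\mathrm{Mod}_g^1;\Q)=0$ for $j>4g-3$. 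Taking $j=4g-3$, the rightmost displayed map lands in $H^{4g-2}(\mathrm{Mod}_g^1;\Q)=0$ and is therefore zero, so the Gysin sequence yields a surjection
\[
H^{4g-3}(G;\Q)\twoheadrightarrow H^{4g-4}(\mathrm{Mod}_g^1;\Q)\cong H^{4g-4}(\cM_{g,1};\Q).
\]

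Combining this with Corollary~\ref{cor:mgoneexp}, which gives $\dim\Gr^W_{6g-4}H^{4g-4}(\cM_{g,1};\Q)>\beta^g+\text{constant}$, I conclude
\[
\dim H^{4g-3}(G;\Q)\ \ge\ \dim H^{4g-4}(\cM_{g,1};\Q)\ \ge\ \dim\Gr^W_{6g-4}H^{4g-4}(\cM_{g,1};\Q)\ >\ \beta^g+\text{constant},
\]
as desired. Finally, $\mathrm{Mod}_{g,1}$ — the mapping class group with one parametrized boundary component — sits in a central extension $1\to\Z\to\mathrm{Mod}_{g,1}\to\mathrm{Mod}_g^1\to 1$, the copy of $\Z$ being generated by the Dehn twist about a curve parallel to the boundary, so it is a special case. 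The step I expect to need the most care is the reduction to the central case: verifying that $H_1(\mathrm{Mod}_g^1;\Z)=0$ for $g\ge 3$ (hence that every extension by $\Z$ is central, so that the $S^1$-fibration has trivial orientation local system) and confirming that the finitely many small-genus cases are harmless. Once the extension is central, the identification of the connecting map with $\cup e$ and the rest of the Gysin bookkeeping are standard.
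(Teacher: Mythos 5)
Your proof is correct and follows essentially the same route as the paper: the Gysin sequence of the $S^1$-bundle $BG\to B\mathrm{Mod}_g^1$ determined by the extension, combined with Harer's bound $\mathrm{vcd}(\mathrm{Mod}_g^1)=4g-3$, yields a surjection $H^{4g-3}(BG;\Q)\twoheadrightarrow H^{4g-4}(B\mathrm{Mod}_g^1;\Q)\cong H^{4g-4}(\cM_{g,1};\Q)$, and Corollary~\ref{cor:mgoneexp} finishes the argument. The one place you are more careful than the paper is in explicitly verifying that every extension of $\mathrm{Mod}_g^1$ by $\Z$ is central for $g\ge 3$ (via $H_1(\mathrm{Mod}_g^1;\Z)=0$, deduced from the Birman exact sequence, vanishing of symplectic coinvariants, and Powell's perfectness of $\mathrm{Mod}_g$), so that the circle bundle is oriented and the Gysin sequence applies with constant coefficients; the paper's proof silently presupposes this when it writes the extension class in untwisted $H^2(\mathrm{Mod}_g^1;\Z)$, so making it explicit is a genuine, if minor, improvement.
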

\begin{proof}
  Let us write $B\mathrm{Mod}_g^1$ for the classifying space of the discrete group $\mathrm{Mod}_g^1$, i.e., a $K(\pi,1)$ for this group.  Its homology is the group homology of $\mathrm{Mod}_g^1$, and its rational cohomology is canonically isomorphic to $H^*(\cM_{g,1};\Q)$.  In particular $\dim H^{4g-4}(B\mathrm{Mod}_g^1;\Q)$ is bounded below by Corollary~\ref{cor:mgoneexp}.
  
  The extension is classified by a class $e \in H^2(\mathrm{Mod}_g^1;\Z)$, which is the first Chern class of some principal $U(1)$-bundle $\pi: P \to B\mathrm{Mod}_g^1$, and we have $P \simeq BG$.  Part of the Gysin sequence for $\pi$ looks like
  \begin{equation*}
    \dots \xrightarrow{e \cup} H^{4g-3}(B\mathrm{Mod}_g^1) \xrightarrow{\pi^*} H^{4g-3}(BG) \xrightarrow{\pi_*} H^{4g-4}(B\mathrm{Mod}_g^1) \xrightarrow{e \cup}
    H^{4g-2}(B\mathrm{Mod}_g^1) \xrightarrow{\pi^*} \dots,
  \end{equation*}
  and by Harer's theorem \cite{Harer86} that $\mathrm{Mod}_g^1$ is a virtual duality group of virtual cohomological dimension $4g-3$ we get $H^{4g-2}(B\mathrm{Mod}_g^1;\Q) = 0$, and hence a surjection $\pi_*: H^{4g-3}(BG) \to H^{4g-4}(B\mathrm{Mod}_g^1)$.

  It is well-known that
  $G = \mathrm{Mod}_{g,1}$ fits into such an extension, where the $\Z$ is generated by Dehn twist along a boundary-parallel curve (see e.g.,\ \cite[Proposition 3.19]{FarbMargalit12}).
\end{proof}

\begin{remark}
  By methods similar to those outlined in \cite[\S6]{CGP1-JAMS}, the dimension bounds in the Corollaries above may be upgraded to explicit injections of graded vector spaces, from the Grothendieck--Teichm\"uller Lie algebra to $\prod_{g \geq 3} H_{4g-4}(\cM_{g,1};\Q)$ and $\prod_{g \geq 3} H_{4g-3}(\mathrm{Mod}_{g,1};\Q)$, respectively.
\end{remark}

\subsection{Support of the rational homology of $\Delta_{g,n}$} \label{sec:support}

In \cite{CGP1-JAMS}, we observed that known vanishing results for the cohomology of $\cM_{g}$ imply that the reduced rational homology of $\Delta_g$ is supported in the top $g-2$ degrees, and that the homology of $G^{(g)}$ vanishes in negative degrees.  Here we prove the analogous result with marked points, stated as Theorem~\ref{thm:hgvanishing} in the introduction, using Harer's computation of the virtual cohomological dimension of $\cM_{g,n}$ from \cite{Harer86}.

\hgvanishing*

\begin{proof}
The case $n = 0$ is proved in \cite{CGP1-JAMS} and the case $g = 1$ follows from Theorem~\ref{thm:genus1}.  

Suppose $g \geq 2$ and $n \geq 1$.  By \cite{Harer86}, the virtual cohomological dimension of $\cM_{g,n}$ is $4g - 4 + n$.  Furthermore, when $n = 1$, we have $H^{4g-3}(\cM_{g,1};\Q) = 0$, by \cite{ChurchFarbPutman12}.   Therefore, the top weight cohomology of $\cM_{g,n}$ is supported in degrees less than $4g - 3 + n - \delta_{1,n},$ where $\delta_{i,j}$ is the Kronecker $\delta$-function.  By Corollary~\ref{cor:mgn-top-wt}, it follows that $\widetilde H_k(\Dgn; \Q)$ is supported in degrees less than $\max\{ 2g - 1, 2g + n - 3 \}$, as required.
\end{proof}

\noindent It would be interesting to have a proof of this vanishing result using the combinatorial topology of $\Delta_{g,n}$.

\section{Remarks on stability}

It is natural to ask whether the homology of $\Dgn$ can be related to
known instances of {\em homological stability} for the complex moduli
space of curves $\cM_{g,n}$ and for the free group $F_g$.  Here, we
comment briefly on the reasons that the tropical
moduli space $\Dgn$ relates to both $\cM_{g,n}$ and $F_g$.

{Homological stability} has been an important point of view in the
understanding of $\cM_{g,n}$; we are referring to the fact that the
cohomology group $H^k(\cM_{g,n}; \Q)$ is independent of $g$ as long as
$g \geq 3k/2 + 1$ \cite{Harer, Ivanov93, Boldsen12}.  The structure of
the rational cohomology in this stable range was famously conjectured by
Mumford, for $n = 0$, and proved by Madsen and Weiss \cite{MadsenWeiss07}; see \cite[Proposition~2.1]{Looijenga96} for the extension to $n > 0$.  There
are certain \emph{tautological classes}
$\kappa_i \in H^{2i}(\cM_{g,n})$ and $\psi_j \in H^2(\cM_{g,n})$, and
the induced map
\begin{equation*}
  \Q[\kappa_1, \kappa_2, \dots] \otimes \Q[\psi_1, \dots, \psi_n] \to H^*(\cM_{g,n};\Q) 
\end{equation*}
is an isomorphism in the ``stable range'' of degrees up to $2(g-1)/3$.

A similar homological stability phenomenon happens for
\emph{automorphisms of free groups}.  If $F_g$ denotes the free group on
$g$ generators and $\mathrm{Aut}(F_g)$ is its automorphism group, then
Hatcher and Vogtmann \cite{HatcherVogtmann98} proved that the group
cohomology $H^k(\mathrm{Aut}(F_g))$ is independent of $g$ as long
as $g \gg k$.  In \cite{Galatius11} it was proved that an analogue of the
Madsen--Weiss theorem holds for these groups: the rational cohomology
$H^k(B\mathrm{Aut}(F_g);\Q)$ vanishes for $g \gg k \geq 1$.

The tropical moduli space $\Dgn$ is closely related to \emph{both} of
these objects.  On the one hand its reduced rational homology is
identified with the top weight cohomology of $\cM_{g,n}$.  On the
other hand it is also closely related to $\mathrm{Aut}(F_g)$, as we
shall now briefly explain. 

\subsection{Relationship with automorphism groups of free groups}
\label{sec:relat-with-autom}

Let us follow the terminology of \cite{Caporaso13} and call a tropical curve \emph{pure} if all its vertices have zero weights.  Isomorphism classes of pure tropical curves are parametrized by an open subset
\begin{equation*}
  \Dgn^{\mathrm{pure}} = |\Dgn| \smallsetminus |\Dgn^w| \subset |\Dgn|.
\end{equation*}
Its points are isometry classes of triples $(G,m,w,\ell)$ with $(G,m,w) \in \J_{g,n}$, such that $w = 0$ and $\ell(e) > 0$ for all $e \in E(G)$.  These spaces are related to Culler and Vogtmann's ``outer space'' \cite{CullerVogtmann86} and its versions with marked points, e.g.,\ \cite{HatcherVogtmann98}.  Indeed, for $n=1$ for example, outer space $X_{g,1}$ can be regarded as the space of isometry classes of triples $(G,m,w,\ell,h)$ where $(G,m,w) \in \J_{g,1}$ are as before, with $w =0$ and $\ell(e) > 0$ for all $e$, and $h: F_g \to \pi_1(G,m(1))$ is a specified isomorphism between the free group $F_g$ on $g$ generators and the fundamental group of $G$ at the point $m(1) \in V(G)$.  The group $\Aut(F_g)$ acts on $X_{g,1}$ by changing $h$, and the forgetful map $(G,m,w,\ell,h) \mapsto (G,m,w,\ell)$ factors over a homeomorphism
\begin{equation*}
  X_{g,1} / \Aut(F_g) \xrightarrow{\approx} \Delta_{g,1}^\mathrm{pure}.
\end{equation*}
Since $X_{g,1}$ is contractible (\cite{CullerVogtmann86,HatcherVogtmann98}) and the stabilizer of any point in $X_{g,1}$ is finite, there is a map
\begin{equation*}
  B\mathrm{Aut}(F_g) \to \Delta_{g,1}^\mathrm{pure}
\end{equation*}
which induces an isomorphism in rational cohomology.
(Recall that $B\mathrm{Aut}(F_g)$ denotes the
\emph{classifying space} of the discrete group $\mathrm{Aut}(F_g)$.
It is a $K(\pi,1)$ space whose singular cohomology is isomorphic to
the group cohomology of $\mathrm{Aut}(F_g)$.)  More generally, there
are groups $\Gamma_{g,n}$
defined up to isomorphism by $\Gamma_{g,0} \cong \mathrm{Out}(F_g)$, and
$\Gamma_{g,n} = \mathrm{Aut}(F_g) \ltimes F_g^{n-1}$ for $n > 0$,
\cite{Hatcher95,
  HatcherVogtmann04}.  The groups $\Gamma_{g,n}$ are also isomorphic to the
groups denoted $A_{g,n}$ in \cite{HatcherWahl10}.

By a similar argument as above, which ultimately again rests on contractibility of outer space, the space $\Delta_{g,n}^\mathrm{pure}$ is a rational model for the group $\Gamma_{g,n}$, in the sense that there is a map
\begin{equation*}
  B\Gamma_{g,n} \to \Delta_{g,n}^\mathrm{pure}
\end{equation*}
inducing an isomorphism in rational homology.  A similar rational model for $B\Gamma_{g,n}$ was considered in \cite[\S6]{CHKV}, and may in fact be identified with a deformation retract of $\Delta_{g,n}^\mathrm{pure}$.  (We shall not need this last fact, but let us nevertheless point out that the subspace $Q_{g,n} \subset \Dgn^\mathrm{pure}$ defined as parametrizing stable
  tropical curves with zero vertex weights in which the $n$ marked
  points are on the {\em core}, as defined in \S\ref{subsec:J-g-n}, is a strong deformation retract of $\Delta_{g,n}^\mathrm{pure}$.  The deformation retraction is given by uniformly shrinking the non-core edges and
  lengthening the core edges, where the rate of lengthening of each
  core edge is proportional to its length.  This $Q_{g,n}$ is homeomorphic to the space considered in \cite[\S6]{CHKV} under the same notation.)

Therefore, the inclusion $\iota \colon  \Dgn^\mathrm{pure} \subset \Dgn$
induces a map in rational homology
\begin{equation}\label{eq:3}
  \widetilde H_*(\Gamma_{g,n}; \Q)\cong \widetilde H_*(\Dgn^\mathrm{pure};\Q)
  \xrightarrow{\iota_*} \widetilde H_*(\Dgn; \Q) \cong \Gr_{6g-6+2n}^W
  H^{6g-7+2n-*}(\cM_{g,n}; \Q).
\end{equation}
By compactness of $\Dgn$ and, for $g > 0$ and $2g + n > 3$, contractibility of $\Dgn^{\rm w}$, this map may equivalently be described as the linear dual of the canonical map from compactly supported cohomology to cohomology of $\Dgn^\mathrm{pure}$.  For $n = 0$ the map $\iota_*$ in particular gives a map 
$$\widetilde H_*(\mathrm{Out}(F_g);\Q) \to \Gr^W_{6g-6} H^{6g-7-*}(\cM_g;\Q).$$

It is intriguing to note, as emphasized to us by a referee, that group homology of $\Gamma_{g,n}$ is also calculated by a kind of graph complex, although different from $G^{(g,n)}$.  For $n=0$ for instance, this the ``Lie graph complex'' calculating group cohomology of $\Out(F_r)$ (see  \cite{Kontsevich93,Kontsevich94} and \cite[Proposition 21, Theorem 2]{ConantVogtmann03}).  In that complex, vertices of valence $m$ are labeled by elements of $\mathrm{Lie}(m-1)$, operations of arity $m-1$ in the Lie operad.  The complex $G^{(g)}$ and the Lie graph complex both have boundary homomorphism involving contraction of edges, but the Lie graph complex calculates cohomology $H^*(\mathrm{Out}(F_g);\Q) = H^*(\Delta_g^\mathrm{pure};\Q)$.  The dual of the Lie graph complex then calculates homology $H_*(\mathrm{Out}(F_g);\Q) = H_*(\Delta_g^\mathrm{pure};\Q)$, but the differential on this dual involves expanding vertices, instead of collapsing edges.  The homomorphism $\iota_*$ therefore seems a bit mysterious from this point of view, going from homology of a graph chain complex to cohomology of a graph cochain complex.  It would be interesting to understand this better on a chain/cochain level, but at the moment we have nothing substantial to say about it.

\subsection{(Non-)triviality of $\iota_*$}
\label{sec:non--triviality}

Known properties of $\cM_{g,n}$ and $\Delta_{g,n}$
severely limit the possible degrees in which~(\ref{eq:3}) may be
non-trivial.
Indeed, by Theorem~\ref{thm:hgvanishing}, the reduced homology of $\Dgn$ vanishes in degrees below $\max\{2g-1, 2g-3+n \}$.   On the other hand, $ H_*(\Gamma_{g,n}; \Q)$ is supported in
degrees at most $2g-3+n$ by \cite[Remark~4.2]{CHKV}.  It follows that
$\iota_*$ vanishes in all degrees except possibly $* = 2g-3 + n$, for $n > 0$, where it gives a homomorphism
\begin{equation*}
  H_{2g-3+n}(\Dgn^\mathrm{pure};\Q) \to H_{2g-3+n}(\Dgn; \Q).
\end{equation*}
In this degree, the homomorphism is \emph{not}
always trivial.  Indeed, for $g = 1$, the domain
$H_{n-1}(\Delta_{1,n}^\mathrm{pure};\Q)$ is one-dimensional and the
map into $H_{n-1}(\Delta_{1,n};\Q) \cong \Q^{(n-1)!/2}$ is injective.

\begin{proposition}\label{prop:CHKV-comparison}
  For $n \geq 3$ odd, the map
  $H_{n-1}(\Delta_{1,n}^\mathrm{pure};\Q) \to
  H_{n-1}(\Delta_{1,n};\Q)$ is nontrivial.
\end{proposition}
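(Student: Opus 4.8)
The plan is to prove the stronger fact, asserted in the discussion preceding the Proposition, that $\iota_*\colon H_{n-1}(\Dn^{\mathrm{pure}};\Q)\to H_{n-1}(\Dn;\Q)$ is \emph{injective}; since for $n\geq 3$ odd the source is one-dimensional, this gives $\iota_*\neq 0$. The mechanism is that the top-degree reduced homology of both spaces is detected by the same group of cellular chains, namely the span of the $(n-1)$-cells $\Gmw_\tau$ parametrizing $n$-cycles that carry the markings $1,\dots,n$ one per vertex (these are exactly the combinatorial types appearing in the proof of Theorem~\ref{thm:boundary}), and $\iota_*$ respects this identification.

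To set this up I would first replace $\Dn^{\mathrm{pure}}$ by the subspace $Q_{1,n}\subset\Dn^{\mathrm{pure}}$ of \S\ref{sec:relat-with-autom}, parametrizing pure genus $1$ curves with all marked points on the core. For $g=1$ a pure curve has first Betti number $1$, so its graph is a single cycle with trees attached; requiring the markings to lie on the core forces, via stability, that there are no trees, so $Q_{1,n}$ is a \emph{subcomplex} of $\Dn$, of dimension $n-1$, whose cells correspond to the ways of placing $1,\dots,n$ around a cycle in nonempty blocks, and whose $(n-1)$-cells are precisely the $\Gmw_\tau$ (with trivial automorphism group when $n\geq 3$). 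As recalled in \S\ref{sec:relat-with-autom}, $Q_{1,n}$ is a strong deformation retract of $\Dn^{\mathrm{pure}}$, so $H_*(Q_{1,n};\Q)\cong H_*(\Dn^{\mathrm{pure}};\Q)$, and under this identification $\iota_*$ is the map induced by the subcomplex inclusion $Q_{1,n}\subset\Dn$.

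Next I would use the subcomplex $W=\Dn^{\mathrm{w}}\cup\Dn^{\mathrm{rep}}\subset\Dn$, which is contractible (for $g=1$, $n\geq 2$) by Lemma~\ref{lem:x-contractible}. A combinatorial type of $\Dn$ lies outside $W$ exactly when it is pure with no repeated markings, and the stability argument above shows this forces it to be one of the $\Gmw_\tau$; each has $n$ edges. Hence the relative cellular chain complex $C_*(\Dn,W;\Q)$ of \S\ref{sec:relative-homology} is concentrated in degree $n-1$, with basis $\{[\Gmw_\tau]\}$, so by the relative cellular homology comparison $H_{n-1}(\Dn,W;\Q)=C_{n-1}(\Dn,W;\Q)\cong\Q^{(n-1)!/2}$; and since $W$ is contractible the long exact sequence of the pair gives $H_{n-1}(\Dn;\Q)\xrightarrow{\sim}H_{n-1}(\Dn,W;\Q)$ (here $n\geq 3$ is used to make the neighbouring homology of $W$ vanish). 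On the other hand the composite of chain maps
\[
C_*(Q_{1,n};\Q)\hookrightarrow C_*(\Dn;\Q)\twoheadrightarrow C_*(\Dn,W;\Q)
\]
is an isomorphism in degree $n-1$ — both groups have the $[\Gmw_\tau]$ as a basis and the map is the identity on it — while $C_*(Q_{1,n};\Q)$ vanishes in degrees above $n-1$. Since $\dim Q_{1,n}=n-1$, we have $H_{n-1}(Q_{1,n};\Q)=\ker\bigl(\partial\colon C_{n-1}(Q_{1,n})\to C_{n-2}(Q_{1,n})\bigr)\subseteq C_{n-1}(Q_{1,n};\Q)$, and the composite above carries this subspace injectively into $C_{n-1}(\Dn,W;\Q)=H_{n-1}(\Dn,W;\Q)$. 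As the composite factors through $H_{n-1}(\Dn;\Q)\xrightarrow{\sim}H_{n-1}(\Dn,W;\Q)$, the map $H_{n-1}(Q_{1,n};\Q)\to H_{n-1}(\Dn;\Q)$, i.e.\ $\iota_*$, is injective.

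It remains to see that the source is nonzero for $n\geq 3$ odd. By \S\ref{sec:relat-with-autom}, $\Dn^{\mathrm{pure}}$ is a rational model for $\Gamma_{1,n}=\Aut(F_1)\ltimes F_1^{n-1}\cong(\Z/2)\ltimes\Z^{n-1}$, with the generator of $\Z/2$ acting by inversion, so $H_{n-1}(\Dn^{\mathrm{pure}};\Q)\cong H_{n-1}(\Z^{n-1};\Q)^{\Z/2}=(\Lambda^{n-1}\Q^{n-1})^{\Z/2}\cong\Q$ whenever $n-1$ is even. An injection from a one-dimensional space is nontrivial, completing the proof. (Sanity check for $n=3$: $C_2(Q_{1,3};\Q)=\Q\cdot[\Gmw_{123}]$ and $\partial[\Gmw_{123}]=0$, because each of the three edge contractions of the triangle yields a two-edge graph whose edge-swapping automorphism is odd and hence kills that cellular chain.) Essentially everything here is formal once one establishes the two combinatorial facts used above — that $\Dn\smallsetminus W$ consists precisely of the open top cells $\Gmw_\tau$, and that $Q_{1,n}$ is realized as an $(n-1)$-dimensional subcomplex of $\Dn$; these are the only points needing care, and both follow from the stability condition as indicated.
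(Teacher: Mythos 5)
Your proof is correct, but it follows a genuinely different route from the paper's. The paper's proof (a ``proof sketch'') models $\Delta_{1,n}^{\mathrm{pure}}\simeq Q_{1,n}$ as the orbit space $((S^1)^n)/O(2)$ and $\Delta_{1,n}$ as $((S^1)^n/R)/O(2)$ where $R$ is the fat diagonal, so that $\iota$ becomes the quotient map collapsing $R$; after the identification $(S^1)^n/O(2)\cong(S^1)^{n-1}/O(1)$, the map on $H_{n-1}$ is computed as the $O(1)$-coinvariants of $\Q\cong H_{n-1}((S^1)^{n-1})\to H_{n-1}((S^1)^n/R)\cong\Q^{(n-1)!}$ sending the fundamental class to the diagonal class, which is visibly nonzero for $n$ odd. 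Your argument instead stays entirely within the cellular machinery for symmetric $\Delta$-complexes developed in \S3--4: you realize $\Delta_{1,n}^{\mathrm{pure}}\simeq Q_{1,n}$ as a genuine \emph{subcomplex} of $\Delta_{1,n}$, quotient by the contractible subcomplex $W=\Delta_{1,n}^{\mathrm w}\cup\Delta_{1,n}^{\mathrm{rep}}$ (Lemma~\ref{lem:x-contractible}), note that $C_*(\Delta_{1,n},W)$ is concentrated in degree $n-1$ with basis the $[\Gmw_\tau]$, and observe that the chain-level composite $C_*(Q_{1,n})\to C_*(\Delta_{1,n})\to C_*(\Delta_{1,n},W)$ is the identity on that basis, so $\iota_*$ is injective in degree $n-1$ by a dimension count ($\dim Q_{1,n}=n-1$). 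The nonvanishing of the source for $n$ odd is then a small group-homology computation for $\Gamma_{1,n}\cong(\Z/2)\ltimes\Z^{n-1}$. Your argument has the merit of establishing injectivity of $\iota_*$ for all $n$ purely combinatorially, without the torus-quotient model, and of leaning directly on the paper's own contractibility results; the paper's version is shorter and more geometrically transparent, computing the image of the fundamental class explicitly. Both are sound. One small remark: your caveat that ``$n\geq 3$ is used to make the neighbouring homology of $W$ vanish'' is unnecessary --- contractibility of $W$ (guaranteed for $g=1$, $n\geq 2$) already kills all of its reduced homology; the role $n\geq 3$ actually plays is ensuring each $\Gmw_\tau$ has trivial automorphisms, so that $[\Gmw_\tau]\neq 0$ as a cellular chain.
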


\begin{proof}[Proof sketch]
  The subspace $\Delta_{1,n}^\mathrm{pure} \subset \Delta_{1,n}$  is homotopy equivalent to the space $Q_{1,n}$, 
  which is the orbit space
  $((S^1)^n)/O(2)$, where $O(2)$ acts by rotating and reflecting all
  $S^1$ coordinates.  Moreover, $\Delta_{1,n}$ is homotopy equivalent to the
  orbit space $((S^1)^n / R)/O(2)$, where $R \subset (S^1)^n$ is the ``fat diagonal'' consisting of points where two coordinates agree.  $(S^1)^n/R$ denotes the quotient space obtained by collapsing $R$, and the homotopy equivalence $\Delta_{1,n} \simeq ((S^1)^n/R)/O(2)$ follows from the contractibility of the bridge locus.

  In this description, the inclusion of
  $\Delta_{1,n}^\mathrm{pure} \hookrightarrow \Delta_{1,n}$ is modeled
  by the obvious quotient map collapsing $R$ to a point.  We have the homeomorphism
  $(S^1)^n/O(2) = (S^1)^{n-1}/O(1)$ and the map
  $H_{n-1}(\Delta_{1,n}^\mathrm{pure};\Q) \to
  H_{n-1}(\Delta_{1,n};\Q)$
  becomes identified with the $O(1)$ coinvariants of the map
  $\Q \cong H_{n-1}((S^1)^{n-1};\Q) \to H_{n-1}((S^1)^n/R;\Q) \cong
  \Q^{(n-1)!}$,
  which sends the fundamental class of $(S^1)^{n-1}$ to the
  ``diagonal'' class, i.e.,\ the sum of all fundamental classes of
  $S^{n-1}$ in our description of $\Delta_{1,n}$ as a wedge of
  $(S^{n-1})$'s.
\end{proof}

\subsection{Stable homology}
\label{sec:stable-homology}
One of the initial motivations for this
paper was to use the tropical moduli space to provide a direct link
between moduli spaces of curves, automorphism groups of free groups,
and their homological stability properties.
In light of homological stability for $\Gamma_{g,n}$ and $\cM_{g,n}$,
it is natural to try to form some kind of direct limit of~$\Dgn$ as
$g \to \infty$.  For $n=1$, there is indeed a map
$\Delta_{g,1} \to \Delta_{g+1,1}$, which sends a tropical curve
$G \in \Delta_{g,1}$ to ``$G \vee S^1$''.  More precisely, the map
adds a single loop to $G$ at the marked point, and appropriately
normalizes edge lengths (for example, multiply all edge lengths in
$G$ by $\frac12$ and give the loop length $\frac12$).  This map
fits with the stabilization map for $B\mathrm{Aut}(F_g)$ into a
commutative diagram of spaces,
\begin{equation}\label{eq:2}
  \begin{aligned}
    \xymatrix{
      {B\mathrm{Aut}(F_g)} \ar[d] \ar[r]^-{\simeq_\Q} &
      {\Delta_{g,1}^\mathrm{pure}} \ar[d] \ar[r] &
      {\Delta_{g,1}} \ar[d]\\
      {B\mathrm{Aut}(F_{g+1})} \ar[r]_-{\simeq_\Q} &
      {\Delta_{g+1,1}^\mathrm{pure}}
      \ar[r] &{\Delta_{g+1,1}}.
    }
  \end{aligned}
\end{equation}
The leftmost vertical arrow is studied in \cite{HatcherVogtmann98}, where it is shown to induce an isomorphism in homology in degree up to $(g-3)/2$.  

For the outer automorphism group $\mathrm{Out}(F_g)$, there is a
similar comparison diagram
\begin{equation*}
  \xymatrix{
    {B\mathrm{Aut}(F_g)} \ar[d] \ar[r]^-{\simeq_\Q} &
    {\Delta_{g,1}^\mathrm{pure}} \ar[d] \ar[r] &
    {\Delta_{g,1}} \ar[d]\\
    {B\mathrm{Out}(F_{g})} \ar[r]_-{\simeq_\Q} &
    {\Delta_{g,0}^\mathrm{pure}}
    \ar[r] &{\Delta_{g,0}}.
  }
\end{equation*}

In light of this relationship between $\cM_{g,n}$ and
$\mathrm{Out}(F_g)$ and $\Delta_{g,n}$ and
$\Delta_{g,n}^\mathrm{pure}$, and in light of \cite{MadsenWeiss07} and
\cite{Galatius11}, it is tempting to ask about a limiting cohomology
of $\Delta_{g,1}$ as $g \to \infty$.  However, this limit seems to be
of a different nature from the corresponding limits for
$B\mathrm{Aut}(F_g)$ and $\cM_{g,n}$, as in the observation below.  

\begin{observation}
The stabilization maps $\Delta_{g,1}\to \Delta_{g+1,1}$ in~\eqref{eq:2} are nullhomotopic. Hence the limiting cohomology vanishes.
\end{observation}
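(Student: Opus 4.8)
The plan is to show that each stabilization map factors, already on the nose, through a contractible subcomplex, and then to read off the statement about limiting cohomology.

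First I would spell out the stabilization map $s_g\colon \Delta_{g,1}\to \Delta_{g+1,1}$ appearing in~\eqref{eq:2}: it sends a tropical curve $(G,m,w,\ell)$ to the curve $G\vee S^1$ obtained by attaching a new loop edge $e_0$ at the marked vertex $m(1)$, rescaling $\ell$ by $\tfrac12$, and declaring $\ell(e_0)=\tfrac12$. The essential observation is that the underlying graph of every curve in the image of $s_g$ contains a loop, namely $e_0$; hence the image of $s_g$ is contained in the closed subcomplex $\Delta^{\mathrm{lw}}_{g+1,1}\subset \Delta_{g+1,1}$ parametrizing tropical curves with loops or with a vertex of positive weight. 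Since that subcomplex carries the subspace topology, $s_g$ factors as the composite of a continuous corestriction $\bar s_g\colon \Delta_{g,1}\to \Delta^{\mathrm{lw}}_{g+1,1}$ with the inclusion $\iota\colon \Delta^{\mathrm{lw}}_{g+1,1}\hookrightarrow \Delta_{g+1,1}$.

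Next I would invoke Theorem~\ref{thm:contractible}\eqref{it:loop-contract} for the pair $(g+1,1)$, which satisfies $g+1>0$ and $2(g+1)-2+1>0$: the subcomplex $\Delta^{\mathrm{lw}}_{g+1,1}$ is either empty or contractible, and it is nonempty by Remark~\ref{rem:when-nonempty}, hence contractible. Any map into a contractible space is nullhomotopic, so $\bar s_g$, and therefore $s_g=\iota\circ\bar s_g$, is nullhomotopic. This proves the first assertion of the observation.

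For the remaining clause, nullhomotopy of $s_g$ gives $s_g^*=0$ on reduced rational cohomology and $(s_g)_*=0$ on reduced rational homology. Thus in the mapping telescope $T=\operatorname{Tel}\big(\Delta_{1,1}\xrightarrow{s_1}\Delta_{2,1}\xrightarrow{s_2}\cdots\big)$ one has $\widetilde H_*(T;\Q)\cong \colim_g \widetilde H_*(\Delta_{g,1};\Q)$, a colimit all of whose transition maps vanish, so $\widetilde H_*(T;\Q)=0$; by universal coefficients over $\Q$ this gives $\widetilde H^*(T;\Q)=0$ as well, and equivalently the inverse system $\big(\widetilde H^*(\Delta_{g,1};\Q),\,s_g^*\big)$ has all maps zero. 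Either way the limiting cohomology of the tower vanishes. I do not expect a genuine obstacle in any of this; the only point that needs a moment's care is checking that $s_g$ really does take values set-theoretically in $\Delta^{\mathrm{lw}}_{g+1,1}$ as a subcomplex, so that the corestriction $\bar s_g$ is continuous — and this is immediate from the explicit formula for $s_g$, since attaching a loop can neither be undone by an edge contraction nor create instability.
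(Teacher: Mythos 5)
Your proof is correct, but it takes a genuinely different route from the paper's. The paper constructs an explicit nullhomotopy: it continuously reallocates the total edge length from the split $(\tfrac{1}{2},\tfrac{1}{2})$ between $G$ and the new loop to $(0,1)$; when the new loop has all the length, the result is the constant map to the single point of $\Delta_{g+1,1}$ given by a loop of length $1$ at a vertex of weight $g$. Your argument instead observes that the image of the stabilization map consists of curves that all contain a loop (the new $e_0$), so it lands in the closed subcomplex $\Delta^{\mathrm{lw}}_{g+1,1}$, and then invokes Theorem~\ref{thm:contractible}\eqref{it:loop-contract} to conclude that this subcomplex is contractible and hence the map is nullhomotopic. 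Both are valid. The paper's approach is more elementary and self-contained: a one-parameter family of length reallocations that stays within the relevant simplices, requiring no appeal to the paper's main technical theorem. Your approach is cleaner once Theorem~\ref{thm:contractible} is in hand, and it has the feature of being ``soft'' (no explicit homotopy is needed), but it is logically heavier since it rests on the entire collapsing machinery of \S\ref{sec:contractibility}. Interestingly, the paper's explicit homotopy actually takes place entirely within $\Delta^{\mathrm{lw}}_{g+1,1}$ (the loop always has positive length along the homotopy), so the paper's argument can be read as a hands-on instantiation of the contractibility that your argument cites as a black box. Your handling of the ``limiting cohomology vanishes'' clause via the mapping telescope is more explicit than the paper's (which leaves it as an immediate consequence) and is correct.
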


\begin{proof}[Proof sketch]
Recall that the map $\Delta_{g,1}\to \Delta_{g+1,1}$ sends $G \mapsto G \vee S^1$, with total edge
length of $G \subset G \vee S^1$ being $\tfrac{1}{2}$ and
the loop $S^1 \subset G \vee S^1$ also having length $\tfrac{1}{2}$.  Continuously
changing the length distribution from $(\tfrac{1}{2},\tfrac{1}{2})$ to $(0,1)$ defines a
homotopy which starts at the stabilization map and ends at the
constant map $\Delta_{g,1} \to \Delta_{g+1,1}$ sending any
weighted tropical curve $\Gamma$ to a loop of length 1 based at a vertex of weight $g$.
\end{proof}

\begin{remark}\label{rem:rep-stable}
It is natural to ask whether the homology groups $H_*(\Delta_{g,n};\Q)$, viewed as $S_n$-representations, may fit into the framework of \emph{representation stability} from \cite{ChurchFarb13}.  First, if we fix both $k$ and $g$, then $H_k(\Dgn; \Q)$ vanishes for $n \gg 0$.  This follows from the contractibility of $\br$, because there is a natural CW complex structure on $\Dgn / \br$ in which all positive dimensional cells have dimension at least $n - 5g + 5$.  See \cite[Theorem~1.3 and Claim~9.3]{cgp-arxiv}.  This vanishing may be compared with the stabilization with respect to marked points for homology of the pure mapping class group, which says that, for fixed $g$ and $k$, the sequence $H_k(\cM_{g,n}; \Q)$ is representation stable \cite{JimenezRolland11}.

Now suppose we fix $g$ and a small {\em codegree} $k$ and study the sequence of $S_n$-representations $H_{3g-4+n-k}(\Dgn; \Q)$.  We still do not expect representation stability to hold in general: as shown in \cite{ChurchFarb13}, representation stability implies polynomial dimension growth, whereas already the dimension of $H_{n-1}(\Delta_{1,n}; \Q)$ grows super exponentially with $n$.

Nevertheless, 
Wiltshire-Gordon points out that $H_{n-1}(\Delta_{1,n}; \Q)$ admits a natural filtration whose graded pieces are representation stable.  Contracting the repeated marking locus gives a homotopy equivalence between $\Delta_{1,n}$ and the one point compactification of a disjoint union of $(n-1)!/2$ open balls.  These balls are the connected components of the configuration space of $n$ distinct labeled points on a circle, up to rotation and reflection.  There is then a natural identification of $H_{n-1}(\Delta_{1,n}; \Q) \otimes \mathrm{sgn}$ with $H^0$ of this configuration space.  By \cite{VarchenkoGelfand87, Moseley17, MoseleyProudfootYoung17}, this $H^0$ carries a natural filtration, induced by localization on a larger configuration space with $S^1$-action whose graded pieces are finitely generated FI-modules.  
\end{remark}

\appendix
\section{Calculations for $g\ge2$}\label{sec:compute}
We now present some calculations of $\widetilde{H}_{*}(\Delta_{g,n};\Q)$ for $g\ge 2.$  Apart from some small cases, these were carried out by computer using the cellular homology theory for $\Dgn$ as a symmetric $\Delta$-complex.  This is notably more efficient than other available methods, e.g.,~equipping $\Delta_{g,n}$ with a cell structure via barycentric subdivision.  We further simplified the computer calculations via relative cellular homology and the contractibility of subcomplexes given by  Theorem~\ref{thm:contractible}. We also used the program \texttt{boundary} \cite{maggiolo-pagani} which efficiently enumerates symmetric orbits of boundary strata of $\ocM_{g,n}$, and hence of cells in $\Delta_{g,n}$.  By~\eqref{eq:mgn-comparison}, these calculations detect top weight cohomology groups $\Gr^W_{6g-6+2n} H^* (\cM_{g,n};\Q).$

In the case $n=0$, our calculations replicate those from earlier manuscripts of Bar-Natan and McKay \cite{BarNatanMcKay}, given the identification $H_*(\Delta_{g};\Q) \cong H_{*-2g+1}(G^{(g)})$. We refer to that manuscript for further remarks on homology computations for the basic graph complex.  When $n>0$ there is no reason the computations of $H_*(\Dgn;\Q)$ could not have been performed earlier, but since we are currently unaware of an appropriate reference, we include them in Table~\ref{tab:calculations}.  Closely related computations that do appear in the literature, such as those in \cite{KhoroshkinWillwacherZivkovic16}, involve graphs with unlabeled marked points.  
The computations in the case $g = 2$ were also given in \cite{Chan15}, where it is also proved that $\widetilde{H}_*(\Delta_{2,n};\Z)$ is supported in the top two degrees. More recently, they were achieved $S_n$-equivariantly in \cite{Yun20}.

\begin{table}[h]%
\begin{tabular}{r|l}
\hline
$(g,n)$ & Reduced Betti numbers of $\Delta_{g,n}$ for $i=0,\ldots,3g-4+n$ \\
\hline 
$(2,0)$ & $(0,0,0)$\\
$(2,1)$ & $(0,0,0,0)$\\
$(2,2)$ & $(0,0,0,0,1)$\\
$(2,3)$ & $(0,0,0,0,0,0)$\\
$(2,4)$ & $(0,0,0,0,0,1,3)$\\
$(2,5)$ & $(0,0,0,0,0,0,5,15)$\\
$(2,6)$ & $(0,0,0,0,0,0,0,26,86)$\\
$(2,7)$ & $(0,0,0,0,0,0,0,0, 155,575)$\\
$(2,8)$ & $(0,0,0,0,0,0,0,0,0, 1066, 4426)$\\
\hline
$(3,0)$ & $(0, 0, 0, 0, 0, 1)$\\
$(3,1)$ & $(0, 0, 0, 0, 0, 1, 0)$\\
$(3,2)$ & $(0, 0, 0, 0, 0, 0, 0, 0)$\\
$(3,3)$ & $(0, 0, 0, 0, 0, 0, 0, 0, 1)$\\
$(3,4)$ & $(0, 0, 0, 0, 0, 0, 0, 0, 3, 2)$\\
\hline
$(4,0)$ & $(0, 0, 0, 0, 0, 0, 0, 0, 0)$\\
$(4,1)$ & $(0, 0, 0, 0, 0, 0, 0, 0, 0, 0)$\\
$(4,2)$ & $(0, 0, 0, 0, 0, 0, 0, 0, 0, 0, 0)$\\
$(4,3)$ & $(0, 0, 0, 0, 0, 0, 0, 0, 0, 0, 2, 1)$\\
\hline
$(5,0)$ & $(0, 0, 0, 0, 0, 0, 0, 0, 0, 1, 0, 0)$\\
$(5,1)$ & $(0, 0, 0, 0, 0, 0, 0, 0, 0, 1, 0, 0, 0)$\\
\hline
$(6,0)$ & $(0, 0, 0, 0, 0, 0, 0, 0, 0, 0, 0, 0, 0, 0, 1)$\\
\hline
\end{tabular}
\vspace{.5cm}
\caption{For each $(g,n)$ shown, the dimensions of $\widetilde{H}_{i-1}(\Delta_{g,n};\Q)$ for $i=1,\ldots,3g-3+n$.}
\label{tab:calculations}
\end{table}

Some of the homology classes displayed in Table~\ref{tab:calculations} have representatives with small enough support that it is feasible to describe them explicitly.
For instance, for $(g,n) = (2,2)$, it is easy to explicitly describe the unique nonzero homology class in $\Delta_{2,2}$; it is represented by the graph shown in Figure~\ref{f:delta22}.  Every edge of the graph is contained in a triangle, so the graph-theoretic lemma below shows immediately that it is a cycle in homology. Moreover it is obviously nonzero since it is in top degree.

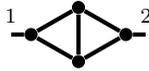
\begin{figure}
\begin{tikzpicture}[my_node/.style={fill, circle, inner sep=1.75pt}, scale=.9]
\begin{scope}[shift={(-5,0.2)}]
\node[my_node] (1) at (0,-.8){};
\node[my_node] (4) at (0,0){};
\node[my_node] (6) at (0.7,-.4){};
\node[my_node] (7) at (-0.7,-.4){};
\node[inv,label={$\scriptstyle 2$}] (mark2) at (1,-.4){};
\node[inv,label={$\scriptstyle 1$}] (mark1) at (-1,-.4){};
\draw[ultra thick] (1)--(4)--(6)--(1)--(7)--(4);
\draw[ultra thick] (6)--(mark2);
\draw[ultra thick] (7)--(mark1);
\end{scope}
\end{tikzpicture}
\caption{The graph appearing in the unique nonzero reduced homology class in $\Delta_{2,2}$.}
\label{f:delta22}
\end{figure}

\begin{lemma}\label{lem:covered-by-triangles}
If $\Gmw\in \J_{g,n}$ has the property that every edge is contained in a triangle, then $\Gmw$ represents a rational cycle in $\Delta_{g,n}$. 
\end{lemma}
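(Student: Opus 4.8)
The plan is to work directly in the cellular chain complex $C_*(\Delta_{g,n};\Q)$ and show that $\partial[\Gmw,\omega]=0$ for every bijective edge labeling $\omega\colon E(\Gmw)\to[p]$. Recall from \S\ref{sec:cellular-chains} (and the proof of Theorem~\ref{thm:split}) that a generator $[\Gmw,\omega]$ of $C_p(\Delta_{g,n};\Q)$ satisfies $[\Gmw,\omega]=\mathrm{sgn}(\sigma)\,[\Gmw,\omega]$ whenever $\Gmw$ admits an automorphism fixing the markings and inducing the permutation $\sigma$ of $E(\Gmw)$; in particular $[\Gmw,\omega]=0$ as soon as $\Gmw$ has an automorphism that fixes the markings and acts on $E(\Gmw)$ by an odd permutation. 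The differential has the form $\partial[\Gmw,\omega]=\sum_{i=0}^{p}(-1)^i\,[\Gmw/e_i,\;\omega|_{E(\Gmw)\smallsetminus\{e_i\}}]$, where $e_i=\omega^{-1}(i)$. So it suffices to prove that each individual term $[\Gmw/e_i,\;\omega|_{E(\Gmw)\smallsetminus\{e_i\}}]$ vanishes in $C_*(\Delta_{g,n};\Q)$.

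Fix an edge $e=e_i$ of $\Gmw$. By hypothesis $e$ lies in a triangle: there are pairwise distinct vertices $u,v,w\in V(\Gmw)$ and pairwise distinct edges $e,f,f'\in E(\Gmw)$, with $e$ joining $v$ and $w$, $f$ joining $u$ and $v$, and $f'$ joining $u$ and $w$. Since $u,v,w$ are distinct, $e$ is not a loop, so $\Gmw/e$ is obtained by identifying $v$ and $w$ to a single vertex $x$ (distinct from $u$) and summing their weights; it is again a stable $n$-marked graph of genus $g$, so the corresponding term of $\partial[\Gmw,\omega]$ is an honest generator of $C_*(\Delta_{g,n};\Q)$. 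In $\Gmw/e$ the edges $f$ and $f'$ now both join $u$ to $x$, and they remain distinct, non-loop edges, so they form a genuine pair of parallel edges.

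The key point is that $\Gmw/e$ admits an automorphism $\theta$ fixing all markings and inducing the transposition $(f\,f')$ on $E(\Gmw/e)$. Using the half-edge formalism of \S\ref{sec:jgn}, define $\theta\colon X(\Gmw/e)\to X(\Gmw/e)$ to be the identity on all vertices and on all half-edges, except that it swaps the half-edge of $f$ at $u$ with the half-edge of $f'$ at $u$, and swaps the half-edge of $f$ at $x$ with the half-edge of $f'$ at $x$. One checks directly that $\theta$ commutes with $s_{\Gmw/e}$ and $r_{\Gmw/e}$, preserves the weight function (it fixes every vertex) and the marking function, and hence is an automorphism of $\Gmw/e$ in $\J_{g,n}$; by construction it acts on $E(\Gmw/e)$ as the odd permutation $(f\,f')$. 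By the sign relation recalled above, $[\Gmw/e,\;\omega|_{E(\Gmw)\smallsetminus\{e\}}]=0$.

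Since this holds for every $i$, we conclude $\partial[\Gmw,\omega]=0$, so $[\Gmw,\omega]$ is a rational cycle in $\Delta_{g,n}$, as claimed. This statement is elementary; the only point with any content is the construction of $\theta$, and even there the verification is routine bookkeeping with the axioms of \S\ref{sec:jgn} (commutation with $s$ and $r$, and preservation of weights and markings), together with the observation that contracting one edge of a triangle produces a pair of distinct, non-loop parallel edges. I do not anticipate any serious obstacle.
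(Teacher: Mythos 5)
Your proof is correct and follows the same approach as the paper: expand the cellular boundary as a signed sum of edge contractions, observe that contracting one edge of a triangle creates a pair of parallel edges, and conclude that each contracted graph carries an automorphism inducing an odd permutation on edges, hence is zero as a cellular chain. The paper states this in two sentences; you have simply spelled out the construction of the automorphism in the half-edge formalism and noted that the triangle hypothesis rules out loops, both of which are correct elaborations of the published argument.
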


\begin{proof}
The boundary of $\Gmw$ in the cellular chain complex $ C_*(\Delta_{g,n};\Q)$ is a sum, with appropriate signs, of 1-edge contractions of $\Gmw$. Each such contraction has parallel edges and hence a non-alternating automorphism, so is zero as a cellular chain.
\end{proof}

For $(g,n)=(3,3)$ and $(6,0)$, the unique nonzero homology group is in top degree, so there is a unique nontrivial cycle, up to scaling.  We have explicit descriptions of these cycles, as linear combinations of trivalent graphs, as shown in Figures~\ref{f:genus33} and \ref{f:genus6}.

For $(g,n)=(3,0),(3,1),(5,0),$ and $(5,1)$, the unique nonzero homology groups in Table~\ref{tab:calculations} are spanned by the classes of ``wheel graphs". Given any $g$, let $W_g$ be a genus $g$ {\em wheel}: the graph obtained from a $g$-cycle $C_g$ by adding a vertex $w$ that is simply adjacent to each vertex of $C_g$.  See Figure~\ref{fig:wheel}.

\begin{figure}[h]
\begin{tikzpicture}[my_node/.style={fill, circle, inner sep=1.75pt}, scale=1]
\def\R{.587} 
\def\H{.2} 
\def\Ratio{1.42} 
\def\M{1.2}
\begin{scope}[scale=1]
\node[my_node] (1) at (0,1){};
\node[my_node] (5) at (.95,.31){};
\node[my_node] (3) at (0.59,-.81){};
\node[my_node] (4) at (-.59,-.81){};
\node[my_node] (2) at (-.95,.31){};
\node[my_node] (W) at (0,0){};
\draw[ultra thick] (1)--(W)--(2)--(W)--(3)--(W)--(4)--(W)--(5);
\draw[ultra thick] (1)--(5)--(3)--(4)--(2)--(1);
\end{scope}
\end{tikzpicture}
\caption{The graph $W_5$.}\label{fig:wheel}
\end{figure}
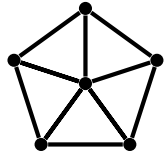
 
\noindent We also regard $W_g$ as an object of $\J_{g,0}$ and let $W_g'$ be the object of $\J_{g,1}$ obtained by marking $w$, the central vertex.
$W_g$ and $W_g'$ represent cells of degree $2g-1$ in $\Delta_{g}$ and $\Delta_{g,1}$, respectively.  When $g$ is even $W_g$ and $W'_g$ have automorphisms that act by odd permutations on the edges, and hence are zero as cellular chains.
When $g$ is odd, these graphs do not have automorphisms that act by odd permutations on the edges, and Lemma~\ref{lem:covered-by-triangles} implies immediately that $W_g$ and $W_g'$ represent rational cycles on $\Delta_{g}$ and $\Delta_{g,1}$ respectively.   Moreover, they are nonzero.  
\begin{lemma}
For $g\ge3 $ odd, the classes represented by $W_g$ and $W_g'$ are nonzero in $\widetilde H_{2g-1}(\Delta_g;\Q)$ and $\widetilde H_{2g-1}(\Delta_{g,1};\Q)$ respectively.
\end{lemma}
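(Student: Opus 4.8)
The plan is to reduce both assertions to the single statement that the wheel class is nonzero in Kontsevich's graph complex, and then to invoke Willwacher's theorem. First I would translate into graph homology. By Theorem~\ref{thm:split} applied with $n=0$ and with $n=1$, the surjections $C_*(\Dg;\Q)\to G^{(g)}$ and $C_*(\Delta_{g,1};\Q)\to G^{(g,1)}$ induce isomorphisms $\widetilde H_{2g-1}(\Dg;\Q)\cong H_0(G^{(g)})$ and $\widetilde H_{2g-1}(\Delta_{g,1};\Q)\cong H_0(G^{(g,1)})$. Since $W_g$ and $W_g'$ have $2g$ edges, they represent cells of degree $2g-1$ in $\Dg$ and $\Delta_{g,1}$, hence cycles of degree $0$ in $G^{(g)}$ and $G^{(g,1)}$, and under the above isomorphisms the (already-established) cellular cycles $W_g$ and $W_g'$ correspond to the graph cycles $W_g$ and $W_g'$. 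So it is enough to show $[W_g]\ne 0$ in $H_0(G^{(g)})$ and $[W_g']\ne 0$ in $H_0(G^{(g,1)})$.

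Next I would dispatch the marked case via the forgetful map. In the proof of Theorem~\ref{thm:transfer} we constructed a chain map $\pi\colon C_*(\Delta_{g,1};\Q)\to C_*(\Dg;\Q)$, given by forgetting the marked point (and by $0$ on graphs that this destabilizes), which restricts to a chain map $G^{(g,1)}\to G^{(g)}$. The central vertex $w$ of $W_g'$ has $\val(w)=g\ge 3$, so forgetting its marking yields the stable graph $W_g$; hence $\pi(W_g')=W_g$ at the level of chains and therefore $\pi_*[W_g']=[W_g]$ on homology. In particular $[W_g]\ne 0$ implies $[W_g']\ne 0$, so the lemma reduces to proving $[W_g]\ne 0$ in $H_0(G^{(g)})$ for odd $g\ge 3$.

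This remaining step is the crux, and here I would simply cite Willwacher's theorem \cite{Willwacher15}: the degree-zero homology of the graph complex $G^{(g)}$ is dual to the weight-$g$ part of the Grothendieck--Teichm\"uller Lie algebra $\mathfrak{grt}_1$, and for odd $g\ge 3$ the wheel cycle $W_g$ pairs nontrivially against the nonzero Deligne--Ihara element $\sigma_g$ (whose graph-cocycle representative involves $W_g$ with nonzero coefficient); hence $[W_g]\ne 0$. See also the discussion in \cite[\S6]{CGP1-JAMS}. For the small cases $g=3$ and $g=5$ this is also confirmed by Table~\ref{tab:calculations}, and for $g=3$ it is even elementary: $2g-1=3g-4=\dim\Dg$, so the nonzero cellular cycle $W_3=K_4$ sits in top degree and cannot be a boundary. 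The genuine obstacle is thus confined to this last step, relying on Willwacher's identification and the nontriviality of the $\sigma_g$; everything else is formal manipulation of the chain-level maps supplied by Theorems~\ref{thm:split} and~\ref{thm:transfer}.
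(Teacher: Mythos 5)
Your proposal is correct and matches the paper's argument in its two essential ingredients: cite Willwacher's theorem for $[W_g]\neq 0$, then use the chain map $\pi$ from the proof of Theorem~\ref{thm:transfer} to push $W_g'$ forward to $\pm W_g$ and conclude $[W_g']\neq 0$. The only difference is cosmetic: you first transport the question to $H_0(G^{(g)})$ and $H_0(G^{(g,1)})$ via Theorem~\ref{thm:split}, whereas the paper runs the $\pi$ argument directly in cellular homology of $\Delta_{g,1}$ and $\Delta_g$; both routes are valid, and the detour buys nothing but also costs nothing.
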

\begin{proof} 
  The fact that $W_g$ represents a nontrivial class is established by \cite{Willwacher15}; see \cite[Theorem~2.6]{CGP1-JAMS}.  As for $W_g'$,
  we apply the chain map $\pi: C_*(\Delta_{g,1};\Q) \to C_*(\Delta_g;\Q)$ from the proof of Theorem~\ref{thm:transfer}.  It sends the cycle $W'_g$ to $\pm W_g$, so the homology class represented by $W'_g$ is also nontrivial.
\end{proof}

\begin{figure}[h!]
\begin{tikzpicture}[my_node/.style={fill, circle, inner sep=1.75pt}, scale=.9]
\begin{scope}[shift={(-5,0.2)}]
\node[my_node] (1) at (0,-.8){};
\node[my_node] (2) at (-.7,.5){};
\node[my_node] (3) at (.7,.5){};
\node[my_node] (4) at (0,0){};
\node[my_node] (5) at (0,.9){};
\node[my_node] (6) at (0.7,-.4){};
\node[my_node] (7) at (-0.7,-.4){};
\node[inv,label={$\scriptstyle 1$}] (mark1) at (0,1.3){};
\node[inv,label={$\scriptstyle 2$}] (mark2) at (1,-.6){};
\node[inv,label={$\scriptstyle 3$}] (mark3) at (-1,-.6){};
\draw[ultra thick] (1)--(4)--(3)--(6)--(1)--(7)--(2)--(5)--(3)--(4)--(2);
\draw[ultra thick] (6)--(mark2);
\draw[ultra thick] (7)--(mark3);
\draw[ultra thick] (5)--(mark1);
\end{scope}
\begin{scope}[shift={(-2.8,0)},scale=1.2]
\node[my_node] (1) at (0,0){};
\node[my_node] (2) at (0,0.8){};
\node[my_node] (3) at (0.8,0.8){};
\node[my_node] (4) at (0.8,0){};
\node[my_node] (5) at (1.2,0.4){};
\node[my_node] (6) at (-.4,0.7){};
\node[my_node] (7) at (-.4,0.1){};
\node[inv,label={$\scriptstyle 1$}] (mark1) at (1.4,.4){};
\node[inv,label={$\scriptstyle 2$}] (mark2) at (-.6,.7){};
\node[inv,label={$\scriptstyle 3$}] (mark3) at (-.6,.1){};
\draw[ultra thick] (1)--(4)--(3)--(2)--(7)--(1)--(6)--(2)--(3)--(5)--(4);
\draw[ultra thick] (6)--(mark2);
\draw[ultra thick] (7)--(mark3);
\draw[ultra thick] (5)--(mark1);
\end{scope}
\begin{scope}[shift={(0,0)},scale=1.2]
\node[my_node] (1) at (0,0){};
\node[my_node] (2) at (0,0.8){};
\node[my_node] (3) at (0.8,0.8){};
\node[my_node] (4) at (0.8,0){};
\node[my_node] (5) at (1.2,0.4){};
\node[my_node] (6) at (-.4,0.7){};
\node[my_node] (7) at (-.4,0.1){};
\node[inv,label={$\scriptstyle 2$}] (mark2) at (1.4,.4){};
\node[inv,label={$\scriptstyle 1$}] (mark1) at (-.6,.7){};
\node[inv,label={$\scriptstyle 3$}] (mark3) at (-.6,.1){};
\draw[ultra thick] (1)--(4)--(3)--(2)--(7)--(1)--(6)--(2)--(3)--(5)--(4);
\draw[ultra thick] (6)--(mark1);
\draw[ultra thick] (7)--(mark3);
\draw[ultra thick] (5)--(mark2);
\end{scope}
\begin{scope}[shift={(3,0)},scale=1.2]
\node[my_node] (1) at (0,0){};
\node[my_node] (2) at (0,0.8){};
\node[my_node] (3) at (0.8,0.8){};
\node[my_node] (4) at (0.8,0){};
\node[my_node] (5) at (1.2,0.4){};
\node[my_node] (6) at (-.4,0.7){};
\node[my_node] (7) at (-.4,0.1){};
\node[inv,label={$\scriptstyle 3$}] (mark3) at (1.4,.4){};
\node[inv,label={$\scriptstyle 1$}] (mark1) at (-.6,.7){};
\node[inv,label={$\scriptstyle 2$}] (mark2) at (-.6,.1){};
\draw[ultra thick] (1)--(4)--(3)--(2)--(7)--(1)--(6)--(2)--(3)--(5)--(4);
\draw[ultra thick] (6)--(mark1);
\draw[ultra thick] (7)--(mark2);
\draw[ultra thick] (5)--(mark3);
\end{scope}
\begin{scope}[shift={(6,.2)},scale=1]
\node[my_node] (1) at (.5,0){};
\node[my_node] (2) at (-.5,0){};
\node[my_node] (3) at (.5,1){};
\node[my_node] (4) at (-.5,1){};
\node[my_node] (5) at (-.7,.5){};
\node[my_node] (6) at (.7,.5){};
\node[my_node] (7) at (0,-.4){};
\node[inv,label={$\scriptstyle 1$}] (mark1) at (-1,.5){};
\node[inv,label={$\scriptstyle 2$}] (mark2) at (1,.5){};
\node[inv,label={0:$\scriptstyle 3$}] (mark3) at (0,-.7){};
\draw[ultra thick] (1)--(6)--(3)--(4)--(5)--(2)--(3)--(2)--(7)--(1)--(4)--(5)--(2);
\draw[ultra thick] (5)--(mark1);
\draw[ultra thick] (6)--(mark2);
\draw[ultra thick] (7)--(mark3);
\end{scope}
\begin{scope}[shift={(-5,-2.5)},scale=1]
\node[my_node] (1) at (.5,0){};
\node[my_node] (2) at (-.5,0){};
\node[my_node] (3) at (.5,1){};
\node[my_node] (4) at (-.5,1){};
\node[my_node] (5) at (-.7,.5){};
\node[my_node] (6) at (.7,.5){};
\node[my_node] (7) at (0,-.4){};
\node[inv,label={$\scriptstyle 1$}] (mark1) at (-1,.5){};
\node[inv,label={$\scriptstyle 3$}] (mark3) at (1,.5){};
\node[inv,label={0:$\scriptstyle 2$}] (mark2) at (0,-.7){};
\draw[ultra thick] (1)--(6)--(3)--(4)--(5)--(2)--(3)--(2)--(7)--(1)--(4)--(5)--(2);
\draw[ultra thick] (5)--(mark1);
\draw[ultra thick] (6)--(mark3);
\draw[ultra thick] (7)--(mark2);
\end{scope}
\begin{scope}[shift={(-2.5,-2.5)},scale=1]
\node[my_node] (1) at (.5,0){};
\node[my_node] (2) at (-.5,0){};
\node[my_node] (3) at (.5,1){};
\node[my_node] (4) at (-.5,1){};
\node[my_node] (5) at (-.7,.5){};
\node[my_node] (6) at (.7,.5){};
\node[my_node] (7) at (0,-.4){};
\node[inv,label={$\scriptstyle 2$}] (mark2) at (-1,.5){};
\node[inv,label={$\scriptstyle 3$}] (mark3) at (1,.5){};
\node[inv,label={0:$\scriptstyle 1$}] (mark1) at (0,-.7){};
\draw[ultra thick] (1)--(6)--(3)--(4)--(5)--(2)--(3)--(2)--(7)--(1)--(4)--(5)--(2);
\draw[ultra thick] (5)--(mark2);
\draw[ultra thick] (6)--(mark3);
\draw[ultra thick] (7)--(mark1);
\end{scope}
\begin{scope}[shift={(0.3,-1.7)},scale=1.4]
\node[my_node] (1) at (.5,0){};
\node[my_node] (2) at (-.5,0){};
\node[my_node] (3) at (-.4,-.5){};
\node[my_node] (4) at (.4,-.5){};
\node[my_node] (5) at (-.7,-.25){};
\node[my_node] (6) at (.7,-.25){};
\node[my_node] (7) at (0,0){};
\node[inv,label={$\scriptstyle 2$}] (mark2) at (1,-.25){};
\node[inv,label={$\scriptstyle 1$}] (mark1) at (-1,-.25){};
\node[inv,label={$\scriptstyle 3$}] (mark3) at (0,.3){};
\draw[ultra thick] (1)--(6)--(4)--(1)--(7)--(2)--(5)--(3)--(4)--(3)--(2);
\draw[ultra thick] (5)--(mark1);
\draw[ultra thick] (6)--(mark2);
\draw[ultra thick] (7)--(mark3);
\end{scope}
\begin{scope}[shift={(3.5,-1.7)},scale=1.4]
\node[my_node] (1) at (.5,0){};
\node[my_node] (2) at (-.5,0){};
\node[my_node] (3) at (-.4,-.5){};
\node[my_node] (4) at (.4,-.5){};
\node[my_node] (5) at (-.7,-.25){};
\node[my_node] (6) at (.7,-.25){};
\node[my_node] (7) at (0,0){};
\node[inv,label={$\scriptstyle 3$}] (mark3) at (1,-.25){};
\node[inv,label={$\scriptstyle 1$}] (mark1) at (-1,-.25){};
\node[inv,label={$\scriptstyle 2$}] (mark2) at (0,.3){};
\draw[ultra thick] (1)--(6)--(4)--(1)--(7)--(2)--(5)--(3)--(4)--(3)--(2);
\draw[ultra thick] (5)--(mark1);
\draw[ultra thick] (6)--(mark3);
\draw[ultra thick] (7)--(mark2);
\end{scope}
\begin{scope}[shift={(6.7,-1.7)},scale=1.4]
\node[my_node] (1) at (.5,0){};
\node[my_node] (2) at (-.5,0){};
\node[my_node] (3) at (-.4,-.5){};
\node[my_node] (4) at (.4,-.5){};
\node[my_node] (5) at (-.7,-.25){};
\node[my_node] (6) at (.7,-.25){};
\node[my_node] (7) at (0,0){};
\node[inv,label={$\scriptstyle 3$}] (mark3) at (1,-.25){};
\node[inv,label={$\scriptstyle 2$}] (mark2) at (-1,-.25){};
\node[inv,label={$\scriptstyle 1$}] (mark1) at (0,.3){};
\draw[ultra thick] (1)--(6)--(4)--(1)--(7)--(2)--(5)--(3)--(4)--(3)--(2);
\draw[ultra thick] (5)--(mark2);
\draw[ultra thick] (6)--(mark3);
\draw[ultra thick] (7)--(mark1);
\end{scope}

\end{tikzpicture}
\caption{The graphs appearing in the unique nonzero reduced homology class in $\Delta_{3,3}$, with unsigned coefficients $1, 1, 1, 1, 1, 1, 1, 2, 2, 2$.}
\label{f:genus33}
\end{figure}
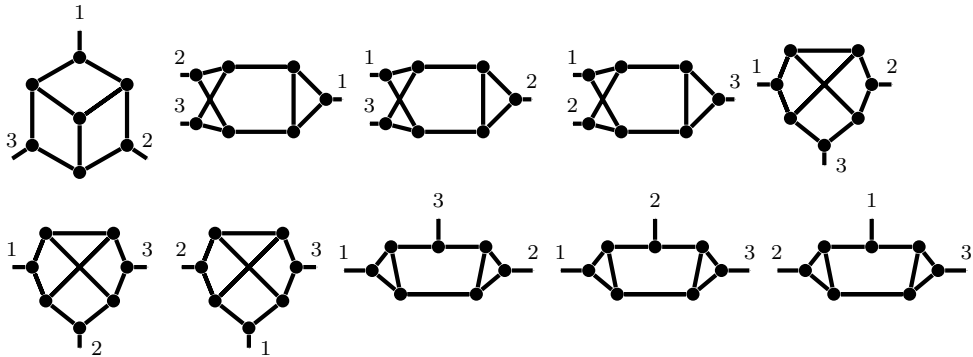

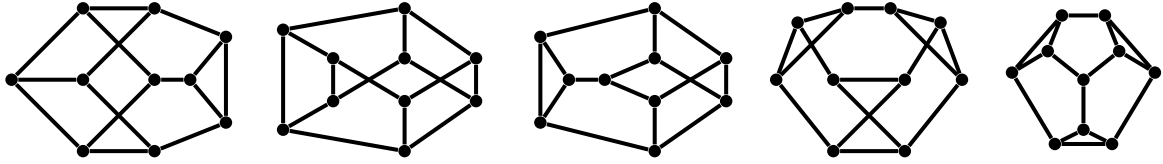
\begin{figure} [h!] \scalebox{.95}{
\begin{tikzpicture}[my_node/.style={fill, circle, inner sep=1.75pt}, scale=1]
\begin{scope}[shift={(-1.5,0)}]
\node[my_node] (A) at (0,0){};
\node[my_node] (B) at (1,1){};
\node[my_node] (C) at (1,0){};
\node[my_node] (D) at (1,-1){};
\node[my_node] (E) at (2,1){};
\node[my_node] (F) at (2,0){};
\node[my_node] (G) at (2,-1){};
\node[my_node] (H) at (3,.6){};
\node[my_node] (I) at (2.5,0){};
\node[my_node] (J) at (3,-.6){};
\draw[ultra thick] (A)--(B)--(E)--(H)--(J)--(G)--(D)--(A);
\draw[ultra thick] (B)--(F)--(D);
\draw[ultra thick] (A)--(C)--(E);
\draw[ultra thick] (C)--(G);
\draw[ultra thick] (F)--(I)--(H);
\draw[ultra thick] (I)--(J);
\end{scope}
\begin{scope}[shift = {(2,0)}]
\node[my_node] (A) at (0.3,.7){};
\node[my_node] (C) at (1,-.3){};
\node[my_node] (B) at (1,.3){};
\node[my_node] (D) at (0.3,-.7){};
\node[my_node] (E) at (2,1){};
\node[my_node] (F) at (2,.3){};
\node[my_node] (G) at (3,.3){};
\node[my_node] (H) at (3,-.3){};
\node[my_node] (I) at (2,-.3){};
\node[my_node] (J) at (2,-1){};
\draw[ultra thick] (A)--(B)--(C)--(D)--(J)--(H)--(G)--(E)--(A);
\draw[ultra thick] (A)--(D);
\draw[ultra thick] (H)--(F);
\draw[ultra thick] (C)--(F)--(E);
\draw[ultra thick] (B)--(I)--(J);
\draw[ultra thick] (I)--(G);
\end{scope}
\begin{scope}[shift = {(5.5,0)}]
\node[my_node] (A) at (.8,0){};
\node[my_node] (B) at (.4,.6){};
\node[my_node] (C) at (.4,-.6){};
\node[my_node] (D) at (1.3,0){};
\node[my_node] (E) at (2,1){};
\node[my_node] (F) at (2,.3){};
\node[my_node] (G) at (2,-.3){};
\node[my_node] (H) at (2,-1){};
\node[my_node] (I) at (3,.3){};
\node[my_node] (J) at (3,-.3){};
\draw[ultra thick] (A)--(B)--(E)--(I)--(J)--(H)--(C)--(A)--(D)--(F)--(E);
\draw[ultra thick] (B)--(C);
\draw[ultra thick] (D)--(G)--(I);
\draw[ultra thick] (G)--(H);
\draw[ultra thick] (F)--(J);
\end{scope}
\begin{scope}[shift = {(10.5,0)}]
\node[my_node] (A) at (-1,.8){};
\node[my_node] (B) at (-1.3,0){};
\node[my_node] (C) at (-.3,1){};
\node[my_node] (D) at (-.5,0){};
\node[my_node] (E) at (-.5,-1){};
\node[my_node] (F) at (.5,.0){};
\node[my_node] (G) at (.5,-1){};
\node[my_node] (H) at (.3,1){};
\node[my_node] (I) at (1,.8){};
\node[my_node] (J) at (1.3,0){};
\draw[ultra thick] (A)--(B)--(E)--(G)--(J)--(I)--(H)--(C)--(A)--(D)--(F)--(I);
\draw[ultra thick] (B)--(C);
\draw[ultra thick] (H)--(J);
\draw[ultra thick] (G)--(D);
\draw[ultra thick] (F)--(E);
\end{scope}
\begin{scope}[shift = {(13.5,-.1)},scale=1]
\node[my_node] (A) at (-1,.2){};
\node[my_node] (B) at (-.5,.5){};
\node[my_node] (C) at (-.3,1){};
\node[my_node] (D) at (.3,1){};
\node[my_node] (E) at (.5,.5){};
\node[my_node] (F) at (1,.2){};
\node[my_node] (G) at (.4,-.8){};
\node[my_node] (H) at (0,-.6){};
\node[my_node] (I) at (-.4,-.8){};
\node[my_node] (J) at (0,0.1){};
\draw[ultra thick] (A)--(B)--(C)--(D)--(E)--(F)--(G)--(H)--(I)--(A)--(C);
\draw[ultra thick] (D)--(F);
\draw[ultra thick] (I)--(G);
\draw[ultra thick] (B)--(J)--(E);
\draw[ultra thick] (J)--(H);
\end{scope}
\end{tikzpicture} }
\caption{The graphs appearing in the unique nonzero reduced homology class in $\Delta_6$, with unsigned coefficients $2,3,6,3,4$.}  
\label{f:genus6}
\end{figure}

\bibliographystyle{amsalpha}
\bibliography{math}

\end{document}